\theoremstyle{plain}
\numberwithin{equation}{section}
\numberwithin{figure}{section}
\numberwithin{table}{section}
\newtheorem{theorem}{Theorem}[section]
\newtheorem{lemma}[theorem]{Lemma}
\newtheorem{corollary}[theorem]{Corollary}
\newtheorem{proposition}[theorem]{Proposition}
\newtheorem{assumption}[theorem]{Assumption}
\theoremstyle{definition}
\theoremstyle{remark}
\DeclareMathOperator{\supp}{supp}
\DeclareMathOperator{\dist}{dist}
\newcommand{\R}{\mathbb{R}}
\newcommand{\N}{\mathbb{N}}
\newcommand{\bk}{\mathbf{k}}
\newcommand{\bx}{\mathbf{x}}
\newcommand{\bn}{\mathbf{n}}
\newcommand{\bv}{\mathbf{v}}
\newcommand{\by}{\mathbf{y}}
\newcommand{\bp}{\mathbf{p}}
\newcommand{\bs}{\mathbf{s}}
\newcommand{\bxi}{\boldsymbol{\xi}}
\newcommand{\Z}{\mathbb{Z}}
\newcommand{\cA}{\mathcal{A}}
\newcommand{\cD}{\mathcal{D}}
\newcommand{\cP}{\mathcal{P}}
\newcommand{\cE}{\mathcal{E}}
\newcommand{\cF}{\mathcal{F}}
\newcommand{\cL}{\mathcal{L}}
\newcommand{\cS}{\mathcal{S}}
\newcommand{\cQ}{\mathcal{Q}}
\newcommand{\abs}[1]{\left|#1\right|}
\newcommand{\wilde}{\widetilde}
\newcommand{\what}{\widehat}
\newcommand{\cT}{\mathcal T}
\newcommand{\cV}{\mathcal V}
\newcommand{\Tref}{\widehat T}
\newcommand{\Pref}{\widehat P}
\newcommand{\Sref}{\widehat S}
\newcommand{\Kref}{\widehat K}
\newcommand{\tetref}{\widehat T^{3D}}
\title{On interpolation spaces of piecewise polynomials on mixed meshes\thanks{The author MK thankfully acknowledges supported by ANID Chile through project FONDECYT 1210579.
The authors JMM and AR thankfully acknowledge support by the Austrian Science Fund (FWF) through the SFB \emph{Taming complexity in
partial differential systems} (grant F65). AR thankfully acknowledges support by the stand-alone project P 36150, respectively.}}
\author{Michael Karkulik\thanks{Departamento de Matem\'atica, Universidad T\'ecnica Federico Santa Mar\'ia,
  Avenida Espa\~na 1680, Valpara\'iso, Chile, email: \texttt{michael.karkulik@usm.cl}},
Jens Markus Melenk\thanks{Institut f\"ur Analysis und Scientific Computing, Technische Universit\"at Wien,
    Wiedner Hauptstrasse 8-10, Wien, Austria, 
  email: \texttt{melenk@tuwien.ac.at}},
Alexander Rieder\thanks{Institut f\"ur Analysis und Scientific Computing, Technische Universit\"at Wien,
    Wiedner Hauptstrasse 8-10, Wien, Austria, 
  email: \texttt{alexander.rieder@tuwien.ac.at}},
}
\date{\today}
\begin{document}
\maketitle
\begin{abstract}  
  We consider fractional Sobolev spaces $H^\theta$, $\theta \in (0,1)$, on 2D domains
  and $H^1$-conforming discretizations by globally continuous piecewise polynomials on a mesh consisting of
  shape-regular triangles and quadrilaterals.
  We prove that the norm obtained from interpolating between the discrete space
  equipped with the $L^2$-norm on the one hand and the $H^1$-norm on the 
  other hand is equivalent to the corresponding continuous interpolation Sobolev norm, and the norm-equivalence
  constants are independent of meshsize and polynomial degree.
  This characterization of the Sobolev norm is then used to show an inverse
  inequality between $H^1$ and $H^{\theta}$.
\end{abstract}
\section{Introduction}
Fractional Sobolev spaces arise frequently in both analysis and numerical analysis of partial
differential or integral equations. As examples, we mention the classical trace space
$H^{1/2}(\partial\Omega)$ and its dual $H^{-1/2}(\partial\Omega)$ on the boundary
of some domain $\Omega$, which are basic function spaces
in the analysis of boundary intgral equations, or the more general spaces $H^\theta(\Omega)$ for
$\theta\in(0,1)$, which arise, e.g., in problems involving fractional diffusion processes. These
spaces can be characterized as interpolation spaces between $L^2$ and $H^1$, e.g.,
\begin{align*}
  H^\theta(\Omega) := [L^2(\Omega),H^1(\Omega)]_\theta := [L^2(\Omega), H^1(\Omega)]_{\theta,2},
\end{align*}
where we use the definition of interpolation spaces via the K-method, cf.~\cite{BerghL_76,Tartar,Triebel} and the details in
Section~\ref{sec:defs} below.
This characterization is especially convenient, as the so-called interpolation theorem allows
to extend mapping properties of linear operators $T:U^i\rightarrow V^i$ for $i=0,1$ to the case
$T:U^\theta \rightarrow V^\theta$, where $U^\theta:=[U^0,U^1]_\theta$ (same for $V$) is an interpolation space.
In the numerical analysis of the problems mentioned above,
in particular for Galerkin discretizations of partial differential or integral equations,
discrete (i.e. finite dimensional) subspaces $U_N\subset U^1$ are employed.
We use the notation $U_N^\theta = \left( U_N, \| \cdot \|_{U^\theta} \right)$, $\theta \in [0,1]$,
to emphasize that these spaces can be equipped with different norms.
The special case arises
where mapping properties of a linear operator $T$ can be derived exclusively on discrete spaces,
i.e., $T: U_N^i \rightarrow V_N^i$ for $i\in\left\{ 0,1 \right\}$.
Then, although one is ultimately interested in 
\begin{align*}
  T: U^\theta_N \rightarrow V^\theta_N,
\end{align*}
the interpolation theorem only states
\begin{align*}
  T: \left[ U_N^0,U_N^1 \right]_\theta
  \rightarrow
  \left[ V_N^0,V_N^1 \right]_\theta.
\end{align*}
By definition of interpolation spaces,
\begin{align*}
  \| v_N \|_{V_N^\theta} \leq \| v_N \|_{\left[ V_N^0,V_N^1 \right]_\theta} \qquad\text{ for all } v_N \in V_N.
\end{align*}
On the other hand, as $U_N$ is finite dimensional, the estimate
\begin{align}\label{eq:C}
  \| u_N \|_{\left[ U_N^0,U_N^1 \right]_\theta}
  \leq C \| u_N \|_{U_N^\theta}
  \qquad\text{ for all } u_N \in U_N
\end{align}
is certainly true for some constant $C=C_N>0$ depending on $N$.
A natural question in this situation is whether
$C_N$ is in fact independent of $N$, or, in other words, if the norm obtained by interpolating a discrete
space equipped with two norms is equivalent to the continuous interpolation norm, uniformly in the
discretization parameter.
From the aforegoing exposition of the problem it is clear that finite element inverse estimates
are an immediate application where such results are employed. We will prove certain inverse estimates in fractional order spaces
in Section~\ref{section:invest} below.
In Section~\ref{section:applications} we comment on various applications where results of this type are also employed.
\bigskip

One way to establish~\eqref{eq:C} with $C>0$ independent of $N$
is presented in~\cite{ArioliL_09}: Assuming that a projection $P_N:U^0\rightarrow U_N$ is available
that is simultaneously bounded in $U^0$ and $U^1$ uniformly in $N$,
then~\eqref{eq:C} is valid. In the context of $h$-version discretizations, common quasi-interpolation
operators can be used as $P_N$, e.g., the Scott-Zhang projector~\cite{ScottZ_90} in the case of
$U^0=L^2(\Omega)$, $U^1=H^1(\Omega)$, $U_N = \cS^1(\cT_h)$.
An application of this argument to $p$-version (or spectral) discretizations would
have to rely on the existence of projection operators that are simultaneously bounded in
$L^2(\Omega)$ and $H^1(\Omega)$, uniformly in the polynomial degree.
For the single-element case of tensor product elements,
such operators can indeed be constructed, cf. \cite{BernardiM_99,BraessPS_09}. An extension to elements not
having tensor product structure or to multi-element settings is not immediate. In the present work,
we do not construct such a projection operator, but rely on the characterization of interpolation spaces
as \textit{trace spaces}, cf.~\cite[Ch.~40]{Tartar}, stating that $[U^0,U^1]_\theta$ is the space
of traces at $0$ of suitable Banach-space valued functions
$U\in C\left( [0,\infty),U^1 \right)\cap C^1\left( (0,\infty),U^0 \right)$,
in particular
\begin{align*}
  \| U(0) \|_{[U^0,U^1]_s}^2 \sim \int_0^\infty z^{1-2\theta} \left( \| U(z) \|_{U^1}^2 +
  \| \partial_z U(z) \|_{U^0}^2 \right)\,dz.
\end{align*}
Using this characterization to show~\eqref{eq:C} requires to construct a linear operator
$\cL: U^0 \rightarrow C\left( [0,\infty],U^0 \right)$ with the following 3 properties:
\begin{itemize}
  \item[(i)] \textbf{lifting:} $\cL u_N (0)=u_N$,
  \item[(ii)] \textbf{boundedness:} $\int_0^\infty z^{1-2\theta} \left( \| \cL u(z) \|_{U^1}^2 +
    \| \partial_z \cL u(z) \|_{U^0}^2 \right)\,dz\lesssim \| u \|_{[U^0,U^1]_s}^2$,
  \item[(iii)] \textbf{conformity:} $\cL: U_N \rightarrow C([0,\infty],U_N)$.
\end{itemize}
In the single-element tensor-product case this avenue was successfully taken
in~\cite{Maday_89,BernardiDM_92,BenBelgacem_94,BernardiDM_10}.
We also refer to the exposition in~\cite{BernardiDM_07} and to works considering stable polynomial trace lifting operators in 
the $p$ and $hp$ setting, cf.~\cite{BabuskaCMP_91,MunozSola_97}.
\subsection{Contributions of the present work}
We will take on the multi-element case of
meshes $\cT$ consisting of triangles and/or quadrilaterals, which we assume only to be admissible
(i.e., no hanging nodes) and shape-regular.
We consider local polynomial degrees $\bp = \left( p_K \right)_{K\in\cT}$, and our discrete space
will be the space $U_N = \cS^{\bp,1}(\cT)$ of globally continuous, piecewise polyonomial functions
(possibly equipped with homogeneous Dirichlet boundary conditions, indicated by a tilde $\wilde\cS^{\bp,1}(\cT)$),
and continuous spaces $U^0 = L^2(\Omega)$, $U^1=H^1(\Omega)$
(possibly equipped with homogeneous Dirichlet boundary conditions $U_1=\wilde H^1(\Omega)$), respectively.
The main result of this work is then the following.
\begin{theorem}\label{thm:main}
  Let $\cT$ be a mesh of $\Omega$ that fulfills Assumption~\ref{assumption:refmaps}
  and $\bp$ be a degree distribution on $\cT$ which
  fulfills Assumption~\ref{assumption1}. Then, for $\theta\in(0,1)$ there holds
  \begin{align*}
    \left[ (\wilde \cS^{\bp,1}(\cT),\| \cdot \|_{L^2(\Omega)}), (\wilde \cS^{\bp,1}(\cT),\| \cdot \|_{\wilde H^1(\Omega)}) \right]_{\theta}
    = (\wilde \cS^{\bp,1}(\cT),\| \cdot \|_{[L^2(\Omega),\wilde H^1(\Omega)]_\theta}),\\
    \left[ (\cS^{\bp,1}(\cT),\| \cdot \|_{L^2(\Omega)}), (\cS^{\bp,1}(\cT),\| \cdot \|_{H^1(\Omega)}) \right]_{\theta}
    = (\cS^{\bp,1}(\cT),\| \cdot \|_{[L^2(\Omega),H^1(\Omega)]_\theta})
  \end{align*}
  with equivalent norms. The constants in the norm equivalences depend only on $\theta$ and the shape regularity constants of $\cT$.
\end{theorem}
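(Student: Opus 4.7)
The plan is to prove the non-trivial direction $\|u_N\|_{[U_N^0, U_N^1]_\theta} \lesssim \|u_N\|_{[U^0,U^1]_\theta}$; the reverse inequality is immediate from the universal property of the K-functional, since restricting the infimum over decompositions $u_N = u^0 + u^1$ to decompositions within $U_N$ can only increase it. Following the roadmap laid out in the introduction, the strategy is to exhibit a linear lifting $\cL: U_N \to C([0,\infty), U_N)$ satisfying the three properties (i)–(iii). Once such an $\cL$ is available, the trace-space characterization of the interpolation norm combined with (i)–(iii) yields
\begin{align*}
\|u_N\|_{[U_N^0,U_N^1]_\theta}^2 \lesssim \int_0^\infty z^{1-2\theta}\bigl(\|\cL u_N(z)\|_{U^1}^2 + \|\partial_z \cL u_N(z)\|_{U^0}^2\bigr)\,dz \lesssim \|u_N\|_{[U^0,U^1]_\theta}^2,
\end{align*}
where the conformity property (iii) is precisely what allows controlling the \emph{discrete} interpolation norm on the left (not just the continuous one, which would follow from any lifting).

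To build $\cL$ I would use a geometric decomposition $u_N = u_V + u_E + u_I$ respecting the mesh hierarchy: $u_V \in \cS^{1,1}(\cT)$ is the low-order piecewise (bi)linear function carrying the vertex values of $u_N$; $u_E$ is a sum, over edges $e \in \cE$, of polynomial edge-bubbles encoding the traces of $u_N - u_V$ on $e$, extended conformingly into the two adjacent elements; and $u_I$ is a sum of interior bubble polynomials, each supported in a single element $K \in \cT$. Each summand admits a separate $z$-lifting. For $u_V$ use a standard Scott-Zhang type extension which is uniformly bounded in $L^2$ and $H^1$. For $u_I$ work element-by-element on the reference configuration, with a polynomial lifting in $z$ (Legendre-series based on the reference square, and a Duffy pullback on the reference triangle) of the type already available from the single-element tensor-product literature; since interior bubbles vanish on the skeleton, conformity across elements is automatic. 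For $u_E$, adapt the polynomial edge-lifting results from \cite{MunozSola_97,BernardiDM_92,BabuskaCMP_91} to produce, on the patch of each edge, a $z$-dependent polynomial extension that matches the edge trace, vanishes at the two endpoint vertices, and respects the local polynomial degrees. Finally set $\cL u_N(z) = \cL u_V(z) + \cL u_E(z) + \cL u_I(z)$, with the $z$-profiles of the components tuned so that the weighted integral in (ii) is controlled by $\|u_N\|_{[U^0,U^1]_\theta}^2$, typically by introducing an artificial tensor direction on the reference element and reading off $z$-decay from an expansion of Legendre/Jacobi type.

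The principal obstacle is the edge contribution on a mesh mixing triangles and quadrilaterals. What is needed there is a polynomial extension from an edge into the two neighbouring elements that is (a) stable in $L^2$ and $H^1$ uniformly in the local polynomial degree $\bp$, (b) compatible with global $H^1$-conformity even when the two elements sharing an edge are of different type, and (c) equipped with a $z$-profile whose weighted integral reproduces the continuous interpolation norm, not some unrelated spectral object such as $\|A_N^{\theta/2} u_N\|$ for a discrete Laplacian $A_N$. Verifying (c) is the subtle part: an arbitrary bounded extension yields at best a bound by a \emph{discrete} spectral fractional norm, and the match with the continuous $[L^2,H^1]_\theta$-norm requires choosing the $z$-profile consistently with a reference-element trace representation. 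Shape-regularity (Assumption~\ref{assumption:refmaps}) is used to transfer reference-element estimates to physical elements, and Assumption~\ref{assumption1} on the degree distribution ensures that the edge construction on a triangle-quadrilateral interface can be done without a polynomial-degree mismatch blowing up the constants.
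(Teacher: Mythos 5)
Your high-level architecture is the right one and matches the paper's: prove only the direction $\|u\|_{[U_N^0,U_N^1]_\theta}\lesssim\|u\|_{[U^0,U^1]_\theta}$, realize the discrete interpolation norm through a conforming lifting into a $z$-extended domain via the trace-space characterization, and build that lifting from a geometric decomposition of $\cS^{\bp,1}(\cT)$. However, the proposal leaves unproven exactly the two points on which the argument stands, and one of them is not even flagged as a difficulty. The first is the decomposition. You take the nodal splitting $u_N=u_V+u_E+u_I$ and implicitly assume that the sum of the local (elementwise or patchwise) fractional norms of the pieces is controlled by $\|u_N\|_{[L^2,H^1]_\theta}^2$; without this, the final bound in property (ii) cannot be closed. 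This stability is false for the naive splitting: already the piecewise (bi)linear interpolant carrying the vertex values is not $L^2$-stable uniformly in $p$ (point values of degree-$p$ polynomials exceed their $L^2$ norm by powers of $p$), and fractional norms do not localize over subdomains for free. The paper's proof hinges on Lemma~\ref{lem:kmr:decomp:2}, imported from \cite{KMR_19}, which provides a different, carefully constructed decomposition (a Scott--Zhang low-order part, vertex-\emph{patch} and edge-\emph{patch} functions that are push-forwards of symmetric reference polynomials, and interior bubbles) together with the $h$- and $p$-uniform bound of the summed weighted local quantities $|\wilde u|^2_{H^\theta(\Tref)}+\|d_{\what\cE}^{-\theta}\wilde u\|^2_{L^2(\Tref)}+\|\wilde u\|^2_{W^{1,\infty}(\Tref_\delta)}$ by $\|u\|^2_{[L^2,\wilde H^1]_\theta}$. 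That stability statement is the crux, and your proposal contains no substitute for it.

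The second gap is the lifting itself. You correctly identify the edge contribution on mixed triangle/quadrilateral patches as the principal obstacle, but you resolve it by ``adapting'' the tensor-product liftings of \cite{MunozSola_97,BernardiDM_92,BabuskaCMP_91}; the paper explicitly notes that these do not transfer to non-tensor-product elements or multi-element patches, which is why it constructs the lifting from scratch: a Gagliardo/Grisvard-type averaging operator $\cA$ from $\Tref$ into the reference tetrahedron (Lemma~\ref{lemma:lifting-from-triangle}), corrected by edge and face projections so as to vanish on prescribed lateral faces (Lemma~\ref{lemma:lifting-from-triangle:bc}), and transported to the prism $\Tref\times(0,1)$ by a Duffy transform (Lemma~\ref{lemma:lifting-to-prism}). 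The weighted estimate $\|d^{1/2-\theta}_{\Tref\times\{0\}}\nabla\cA^{\Pref}_{\what\cE}u\|_{L^2(\Pref)}\lesssim|u|_{H^\theta(\Tref)}+\|d_{\what\cE}^{-\theta}u\|_{L^2(\Tref)}$ is precisely the $z$-profile property (c) you ask for, and it is derived directly for the averaging construction rather than read off a Legendre/Jacobi expansion; global conformity is then obtained by lifting only the ``simple'' reference functions of the decomposition and mapping them with the push-forward operators $T_\omega$ (a 2D Duffy map plus Gauss--Lobatto interpolation, where Assumption~\ref{assumption1} enters). As it stands, your text is a plausible roadmap rather than a proof: both load-bearing ingredients remain to be constructed.
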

As mentioned in the beginning, interpolation spaces are defined via the K-method. Details are given in Section~\ref{sec:defs} below.
\subsection{Construction of the lifting operator}\label{sec:construction}
The proof of theorem~\ref{thm:main} will be given in Section~\ref{sec:lifting:dcomp} below.
It relies on the characterization of
interpolation spaces as trace spaces as given above, and hence on the definition of an appropriate lifting
operator $\cL$ with the properties given above.
We will construct the action of this lifting operator in several steps.
In a first step we will construct a single-element lifting operator $\cA$,
mapping functions from the reference triangle $\what T$ to the reference tetrahedron $\tetref$,
cf. Lemma~\ref{lemma:lifting-from-triangle}.
This will be done by an ubiquitous averaging process, cf.~\cite{Grisvard}, which goes back
at least to~\cite{Gagliardo_57}. This averaging process ensures the lifting property (i) and the boundedness property
(ii) \textit{locally}.
In a second step, homogeneous
Dirichlet boundary conditions on one or more edges $\what\cE$ of $\partial\what T$ will be taken into account,
leading to liftings $\cA_{\what\cE}$
that vanish on the associated faces of $\tetref$, cf. Lemma~\ref{lemma:lifting-from-triangle:bc}.
Finally, a Duffy transform will
be used to transform the refence tetrahedron $\tetref$ into a prism $\what P =\what T\times (0,1)$, which gives
rise to the associated lifting operator $\cA_{\what\cE}^{\what P}$, cf. Lemma~\ref{lemma:lifting-to-prism}.
Applying this operator 
elementwise, we can construct conforming liftings of “simple” discrete functions $u_{hp}\in\cS^{\bp,1}(\cT)$,
i.e., functions with local support (elements, edge- or vertex-patches) which on every element of their support
are copies of a certain “symmetric” reference function. Consequently, we can lift such simple discrete functions
in a conforming way to ensure property (iii).
It is obvious that $\cS^{\bp,1}(\cT)$ can be decomposed on an algebraic level
into such “simple” discrete functions by successively subtracting the degrees of
freedom associated with vertices and edges. However, in order to combine the bounded local lifting operators
into a globally bounded one, we need a decomposition which additionally is bounded in the norms of interest.
To that end, we will employ a result of our recent work~\cite{KMR_19}, cf. Lemma~\ref{lem:kmr:decomp:2} below.
\subsection{Applications}\label{section:applications}
As already stated above, the result of Theorem~\ref{thm:main} can be used to
prove finite element inverse estimates
on fractional order spaces, cf. Section~\ref{section:invest}.
Such inverse estimates are widely employed in finite and boundary element analysis,
cf.~\cite{GrahamHS_05,Georgoulis_08}.
Various other available results in the literature rely on our main result Theorem~\ref{thm:main}.
We give a brief overview:
\begin{itemize}
  \item[(i)] In~\cite{AuradaFFKMP_17}, inverse estimates for the classical boundary integral operators
    associated to the three-dimensional Laplacian are derived in the $hp$-setting based on the presently proved Theorem~\ref{thm:main},
    cf.~\cite[Cor.~3.2]{AuradaFFKMP_17}.
  \item[(ii)] In~\cite{FuhrerMPR_15}, $p$-explict bounds on the condition number of $hp$-boundary 
    element methods are derived, with Theorem~\ref{thm:main} proving crucial to obtain the 
    needed estimates in fractional Sobolev norms.
  \item[(iii)] When considering discretizations of parabolic problems,
     (analytic) semigroups are the natural setting. In this general theory,
     the regularity of the solution is governed
     by the regularity of the initial condition, as represented by different interpolation spaces.

     When considering discretizations of such problems via the method of lines,
     i.e., by first performing a discretization of the spatial variables, one ends
     up with a semidiscrete semigroup~\cite[Chapter 9]{Thom97}, and the 
     corresponding interpolation spaces are between spaces of piecewise polynomial function.
     Thus, when discretizing in time, studying these spaces becomes crucial to determine speed of
     convergence; see, e.g., \cite{SchSch01}.

     This approach is used in~\cite{MelenkR_21}, where an $hp$-finite element method with DG-Galerkin timestepping is 
     derived for a parabolic equation with fractional Laplace operator in space, and exponential convergence
     in the number of degrees of freedom is shown.
     Theorem~\ref{thm:main} provides the crucial regularity for the spatially discrete semigroup.
\end{itemize}

\section{Notation and preliminairies}
The shorthand $a \lesssim b$ expresses $a \leq C b$ for a constant $C > 0$ that does not depend on parameters of 
interest (in particular the mesh size $h$ and the polynomial degree $p$). The notation  
$a \sim b$ is short for $a \lesssim b$ in conjunction with $b \lesssim a$.
\subsection{Functional setting}\label{sec:defs}
Let $\omega\subset\R^2$ be polygonal domain, i.e., in particular Lipschitz and bounded.
The Sobolev spaces $L^2(\omega)$ and $H^1(\omega)$ are defined in a standard way, cf.~\cite{Adams_75,Tartar}.
For $\gamma\subset\partial\omega$ a relatively open subset of the boundary $\partial\omega$,
we also define $\wilde H^1_\gamma(\Omega)$ as functions in $H^1(\Omega)$ with vanishing trace on $\gamma$,
and we use the standard notation $\wilde H^1(\omega)=\wilde H^1_{\partial\omega}(\omega)$.
Fractional Sobolev spaces for $\theta\in(0,1)$ are defined in two ways. The first way is based on the
K-method of interpolation, cf.~\cite{BerghL_76,Tartar, Triebel}.
If $(X_0,\|\cdot\|_0)$ and $(X_1,\|\cdot\|_1)$ are two Banach spaces with continuous
embedding $X_1\subset X_0$, define the K-functional
$K^2(t,u) := \inf_{v\in X_1}\| u -v \|_0^2 + t^2\| v \|_1^2$ and the norm
\begin{align*}
  \| u \|_{[X_0,X_1]_\theta}^2 := \int_0^\infty t^{-2\theta} K^2(t,u)\frac{dt}{t}.
\end{align*}
Then, we define the interpolation space
\begin{align*}
  [X_0,X_1]_\theta := \left\{ u\in X_0 \mid \| u \|_{[X_0,X_1]_\theta}<\infty \right\}.
\end{align*}
We will use the spaces $[L^2(\omega),H^1(\omega) ]_\theta$
and $[ L^2(\omega),\wilde H^1_\gamma(\omega) ]_\theta$.
We will also use seminorms defined by interpolation: if $\| \cdot \|_1^2 = \| \cdot \|_0^2 + |\cdot|_1^2$ with $|\cdot|_1$ being a seminorm,
then let $k^2(t,u) := \inf_{v\in X_1}\| u -v \|_0^2 + t^2 | v |_1^2$, and define
\begin{align*}
  | u |_{[X_0,X_1]_\theta}^2 := \int_0^\infty t^{-2\theta} k^2(t,u)\frac{dt}{t}.
\end{align*}

The second way to define fractional order Sobolev spaces is by using 
Aronstein-Slobodeckij double integral norms.
Define
\begin{align*}
  \| u \|_{H^\theta(\omega)}^2 &:= \| u \|_{L^2(\omega)}^2 + | u |_{H^\theta(\omega)}^2,
  \qquad
  | u |_{H^\theta(\omega)}^2 := \int_{\omega}\int_{\omega} \frac{|u(x)-u(y)|^2}{|x-y|^{2+2\theta}}\,dxdy,
\end{align*}
and set
\begin{align*}
  H^\theta(\omega) &:= \left\{ u\in L^2(\omega)\mid \| u \|_{H^\theta(\omega)}<\infty \right\}.
\end{align*}
Norms defined by interpolation and by double integrals are equivalent on fixed Lipschitz domains,
cf.~\cite{McLean_00}:
\begin{lemma}\label{lem:refel:norms}
  Let $\what\omega\subset\R^2$ be a bounded Lipschitz domain and $\theta\in(0,1)$. Then,
  \begin{align*}
    H^\theta(\what\omega) &= [L^2(\what\omega),H^1(\what\omega) ]_\theta,
  \end{align*}
  with equivalent norms.
\end{lemma}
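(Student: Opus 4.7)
The plan is to transfer the claim to the whole space $\R^2$, where both norms admit explicit Fourier representations, via a universal extension operator and the interpolation theorem.

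First I would invoke a Stein--Calder\'on--Rychkov extension operator $E$ on the bounded Lipschitz domain $\what\omega$, chosen so that $E:H^s(\what\omega)\to H^s(\R^2)$ is bounded simultaneously for every $s\in\{0,\theta,1\}$, where for $s=\theta$ the spaces carry the Slobodeckij double-integral norm. For bounded Lipschitz domains such an operator is classical. The restriction $R:H^s(\R^2)\to H^s(\what\omega)$ is trivially bounded for $s=0,1$, and also for the Slobodeckij norm since restricting the non-negative double integral from $\R^2\times\R^2$ to $\what\omega\times\what\omega$ can only decrease it.

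Second, on the whole space I would establish the identity $[L^2(\R^2),H^1(\R^2)]_\theta = H^\theta(\R^2)$ with equivalent norms. This is a standard Fourier computation: via Plancherel the two sides are isometric to weighted $L^2$-spaces, and the $K$-method applied to the pair of weighted $L^2$-spaces yields the Bessel-potential norm $\|(1+|\xi|^2)^{\theta/2}\what u\|_{L^2}$ (see e.g.\ the Bergh--L\"ofstr\"om treatment). Independently, for $\theta\in(0,1)$ the Slobodeckij seminorm on $\R^2$ is equivalent to $\||\xi|^\theta\what u\|_{L^2}$ by a direct Plancherel plus translation-invariance argument, closing the loop.

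Finally, combining these ingredients gives both inclusions. For $u\in H^\theta(\what\omega)$, the element $Eu$ lies in $H^\theta(\R^2)=[L^2(\R^2),H^1(\R^2)]_\theta$, and applying the interpolation theorem to $R$ yields $u=REu\in[L^2(\what\omega),H^1(\what\omega)]_\theta$ with a controlled norm. Conversely, for $u\in[L^2(\what\omega),H^1(\what\omega)]_\theta$, the interpolation theorem applied to $E$ gives $Eu\in H^\theta(\R^2)$ in the Slobodeckij sense, and restriction recovers $u$ with a controlled Slobodeckij norm. The hard part is really the existence of the universal Lipschitz extension operator bounded on all three scales simultaneously; once that is accepted as a black box, the remainder is a short application of the interpolation theorem together with the Fourier-analytic identification on $\R^2$, and $\theta\in(0,1)$ enters only through the elementary computation of the double-integral Fourier symbol.
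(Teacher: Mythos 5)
Your proposal is correct: the extension--restriction (retraction/coretraction) argument combined with the Fourier identification of both norms on $\R^2$ is exactly the standard proof of this equivalence, and the only nontrivial ingredient is, as you say, a single extension operator bounded simultaneously on $L^2$, $H^1$ and on $H^\theta$ with the Slobodeckij norm, which is classical for bounded Lipschitz domains. The paper does not prove this lemma itself but cites \cite{McLean_00}, where essentially this same argument is carried out, so your route matches the referenced proof.
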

\subsection{Discrete setting} 
For finite sets $M$, we denote by $\#M$ the counting measure, i.e., the number of elements in $M$.
For geometric objects $M$, we denote by $h_M$ the Euclidean diameter of $M$, by $d_M$ the Euclidean
distance to $M$, and by $|M|$ the Lebesque measure of $M$.
We consider finite partitions (\textit{meshes})
$\cT$ of $\Omega$ into triangles and/or quadrilaterals $K$ (\textit{elements}),
which we define to be open sets.
We will use the reference triangle $\Tref$ given by the vertices
$\widehat\bv_1 = (0,\frac{2}{\sqrt{3}})$, $\widehat\bv_2 = (1,-\frac{1}{\sqrt{3}})$, $\widehat\bv_3=(-1,-\frac{1}{\sqrt{3}})$.
The reference rectangle is given by
\begin{align*}
  \Sref := \left\{ (\xi,\eta)\mid -1<\xi<1, -1/\sqrt{3} < \eta < 2/\sqrt{3} \right\}.
\end{align*}
The set of all vertices of $\cT$ is denoted by $\cV$ and the set of all edges is denoted by $\cE.$
Additionally, we will use the set $\cV^{\rm int}$ of inner vertices,
i.e., vertices not lying on the
boundary of $\Omega$ (again the same definition for inner edges).
We assume the following.
\begin{assumption}\label{assumption:refmaps}
  \begin{enumerate}
    \item For each element $K\in\cT$, there exists $\what K\in\{ \Sref,\Tref \}$ and a bijective element map
      $F_K:\overline{\what K}\rightarrow \overline K$ that is $C^1(\overline{\what K})$.
    \item The Gramian $G(x) := (F_K'(x))^\top F_K'(x)$ has two eigenvalues  which fulfill
      \begin{align*}
	\sup_{x\in\what K} \max\left( 
	  \frac{h_K^2}{\lambda_1(x)}, \frac{\lambda_1(x)}{h_K^2},
	  \frac{h_K^2}{\lambda_2(x)}, \frac{\lambda_2(x)}{h_K^2}
	\right) \leq\gamma
      \end{align*}
      for some fixed constant $\gamma>0$.
    \item The intersection $\overline{K_1}\cap\overline{K_2}$ of two distinct elements is either empty, exactly one point, 
      or exactly one edge. If the intersection is an edge $e = F_{K_1}(\what e_j) = F_{K_2}(\what e_k)$, then
      $F_{K_1}^{-1}\circ F_{K_2}|_{\what e_k}: \what e_k \rightarrow \what e_j$ is an affine bijection.
    \item Each edge is either fully contained in $\Omega$ or in $\partial\Omega$.
  \end{enumerate}
\end{assumption}
For a vertex $V\in\cV$ we define the vertex patch $\omega_V$ to be the (open) domain covered by all elements having $V$ as vertex,
\begin{align*}
  \omega_V = \textrm{interior}\Bigl( \bigcup_{V\in\overline K} \overline K \Bigr),
\end{align*}
and likewise we define edge patches.
We stress that we will frequently abuse this notation and use patches as collection of the
elements defining them.
For $p\in\N$ we define polynomial spaces
\begin{align*}
  \cP^p &:= \textrm{span}\left\{ x^i y ^j \mid 0 \leq i,j \text{ and }  i + j\leq p \right\},\\
  \cQ^p &:= \textrm{span}\left\{ x^i y ^j \mid 0 \leq i,j \leq p \right\}.
\end{align*}
We also write $\wilde \cP^p$ and $\wilde \cQ^p$ to indicate polynomials vanishing on the boundary of $\Sref$.
We write
\begin{align*}
  \cP^p(\Kref) :=
  \begin{cases}
    \cP^p & \text{ if } \Kref = \Tref,\\
    \cQ^p & \text{ if } \Kref = \Sref.
  \end{cases}
\end{align*}
For each element $K\in\cT$ we choose a polynomial degree $p_K\in\N$ and collect them in the family $\bp := \left( p_K \right)_{K\in\cT}$.
We define spaces of piecewise polynomials as
\begin{align*}
  \cS^{\bp,0}(\cT) &:= \left\{ u\in L^2(\Omega) \mid u\circ F_K \in \cP^{p_K}(\Kref) \text{ for all } K\in\cT  \right\},\\
  \cS^{\bp,1}(\cT) &:= \cS^{\bp,0}(\cT)\cap H^1(\Omega),\\
  \wilde\cS^{\bp,1}(\cT) &:= \cS^{\bp,0}(\cT)\cap \wilde H^1(\Omega).
\end{align*}
For subpartitions $\cT|_\omega\subset\cT$ of elements belonging to the patch $\omega$, we define
$\cS^{\bp,0}(\cT|_\omega)$,
$\cS^{\bp,1}(\cT|_{\omega})$, and $\wilde\cS^{\bp,1}(\cT|_{\omega})$ accordingly:
\begin{align*}
  \cS^{\bp,0}(\cT|_\omega) &:= \left\{ u\in L^2(\omega) \mid u\circ F_K \in \cP^{p_K}(\Kref) \text{ for all } K\in\omega  \right\},\\
  \cS^{\bp,1}(\cT|_\omega) &:= \cS^{\bp,0}(\cT|_\omega)\cap H^1(\omega),\\
  \wilde\cS^{\bp,1}(\cT|_\omega) &:= \cS^{\bp,0}(\cT_\omega)\cap \wilde H^1(\omega).
\end{align*}
In addition, we will assume that the polynomial degree distributions of our meshes satisfy the following assumption.
\begin{assumption}\label{assumption1}
  If a triangle $T$ and a quadrilateral $S$ share an edge $e$,
  then the corresponding polynomial degrees $p_T$ and $p_S$ satisfy
  \begin{align*}
    p_T\leq p_S \quad\text{ or }\quad 2p_S \leq p_T.
  \end{align*}
\end{assumption}
We will switch between $\Tref$ and $\Sref$ using the Duffy-transformation
\begin{align*}
  T_\cD:
  \begin{cases}
    \Sref\rightarrow\Tref,\\
    (\xi,\eta)\mapsto 
    \left(
    \frac{\frac{2}{\sqrt{3}}-\eta}{\sqrt{3}}\xi,\eta
    \right)
  \end{cases}
\end{align*}
which collapses the upper
edge of $\Sref$ into the vertex $\widehat \bv_1$. We will make use of the notation
$\Tref_\varepsilon := \{ (\xi,\eta)\in\Tref \mid d_{\widehat\bv_1}(\xi,\eta)< \varepsilon \}$.
Whenever $\Tref_\varepsilon$ shows up, it is assumed implicitly that $\varepsilon$ is small enough to ensure that
$\Tref_\varepsilon$ has positive distance to $\widehat\bv_2,\widehat\bv_3$.
The proof of the following lemma follows from elementary considerations, cf.~\cite[Lemma 5.5]{KMR_19}.
\begin{lemma}\label{lem:2dduffy}
  With the Duffy-transformation $T_\cD$ define the Duffy operator $\cD: u \mapsto u\circ T_\cD$.
  Then we have the estimates
      \begin{align*}
	\| \cD u \|_{L^2(\Sref)} &\lesssim \| d_{\widehat \bv_1}^{-1/2} u \|_{L^2(\Tref)}
	\lesssim \| u \|_{L^2(\Tref)} + \| u \|_{L^\infty(\Tref_\varepsilon)},\\
	\| \nabla\cD u \|_{L^2(\Sref)} &\lesssim \| d_{\widehat \bv_1}^{-1/2} \nabla u \|_{L^2(\Tref)}
	\lesssim \| \nabla u \|_{L^2(\Tref)} + \| \nabla u \|_{L^\infty(\Tref_\varepsilon)}.
      \end{align*}
\end{lemma}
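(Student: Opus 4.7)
The plan is to prove both chains of inequalities by a change of variables from $\Sref$ to $\Tref$ and a subsequent splitting of $\Tref$ into a neighborhood of $\widehat\bv_1$ and its complement. The key geometric observation is that on $\Tref$ the distance to the collapsed vertex satisfies $d_{\widehat\bv_1}(x,y)\sim \tfrac{2}{\sqrt{3}}-y$, which can be verified directly from the two edges emanating from $\widehat\bv_1$ (they have slopes $\pm\sqrt 3$, so $|x|\le(\tfrac{2}{\sqrt{3}}-y)/\sqrt 3$ in $\Tref$, and the claim follows from $d_{\widehat\bv_1}^2=x^2+(\tfrac{2}{\sqrt{3}}-y)^2$).

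For the first inequality, I would compute the Jacobian of $T_\cD$: differentiating the components shows that it is lower-triangular with diagonal entries $(\tfrac{2}{\sqrt{3}}-\eta)/\sqrt 3$ and $1$, so $|\det T_\cD'(\xi,\eta)|=(\tfrac{2}{\sqrt{3}}-\eta)/\sqrt 3$. Since $T_\cD$ maps $\Sref$ onto $\Tref$ bijectively and the image coordinate satisfies $y=\eta$, the geometric observation yields $|\det T_\cD'|\sim d_{\widehat\bv_1}\circ T_\cD$. A pull-back/push-forward change of variables then gives
\begin{align*}
\|\cD u\|_{L^2(\Sref)}^2=\int_{\Sref}|u\circ T_\cD|^2\,d(\xi,\eta)=\int_{\Tref}\frac{|u(x,y)|^2}{|\det T_\cD'|\circ T_\cD^{-1}}\,d(x,y)\sim\int_{\Tref}d_{\widehat\bv_1}^{-1}|u|^2\,d(x,y),
\end{align*}
which is the first chain. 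For the gradient, the chain rule gives $\nabla\cD u=(T_\cD')^\top(\nabla u)\circ T_\cD$. The entries of $T_\cD'$ are uniformly bounded on $\Sref$ (since $\eta\in(-1/\sqrt 3,2/\sqrt 3)$ and $\xi\in(-1,1)$), so $|\nabla\cD u|\lesssim|(\nabla u)\circ T_\cD|$ pointwise, and exactly the same change of variables argument produces the weighted $L^2$ bound for $\nabla u$.

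For the second inequality in each line, I would split $\Tref=\Tref_\varepsilon\cup(\Tref\setminus\Tref_\varepsilon)$. On $\Tref\setminus\Tref_\varepsilon$ one has $d_{\widehat\bv_1}^{-1}\le\varepsilon^{-1}$, which bounds the weighted integral by $\|u\|_{L^2(\Tref)}^2$ (respectively $\|\nabla u\|_{L^2(\Tref)}^2$). On $\Tref_\varepsilon$ I would pull out the $L^\infty$ norm and use that, in two dimensions, $d_{\widehat\bv_1}^{-1}$ is integrable near the vertex: in polar coordinates centered at $\widehat\bv_1$, $\int_{\Tref_\varepsilon}d_{\widehat\bv_1}^{-1}\lesssim\int_0^\varepsilon r^{-1}\cdot r\,dr=\varepsilon$. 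Combining the two pieces yields the stated bound, and the analogous argument applies to $\nabla u$.

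There is no real obstacle here: the proof is a short and direct computation once the Jacobian of $T_\cD$ has been written down and the equivalence $d_{\widehat\bv_1}\sim\tfrac{2}{\sqrt{3}}-\eta$ on $\Tref$ has been noted. The only point requiring attention is making sure the $L^\infty$-piece only sees the singular region $\Tref_\varepsilon$ (which is why the splitting is necessary), and verifying that the constants in $d_{\widehat\bv_1}\sim\tfrac{2}{\sqrt 3}-\eta$ depend only on the fixed geometry of $\Tref$.
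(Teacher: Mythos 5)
Your proof is correct and is exactly the elementary argument the paper has in mind: the paper itself omits the proof, stating only that it ``follows from elementary considerations'' and citing \cite[Lemma 5.5]{KMR_19}, and your computation (Duffy Jacobian $\sim d_{\widehat\bv_1}\circ T_\cD$, change of variables, bounded entries of $T_\cD'$ for the gradient, and the split into $\Tref_\varepsilon$ and its complement with $\int_{\Tref_\varepsilon} d_{\widehat\bv_1}^{-1}\lesssim\varepsilon$) supplies precisely those considerations. The only cosmetic slip is calling $T_\cD'$ lower-triangular when it is upper-triangular; this does not affect the determinant or any estimate.
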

We naturally embed $\R^2$ into $\R^3$ and identify $\Tref$, $\Sref$, $\widehat\bv_1$, $\widehat\bv_2$, $\widehat\bv_3$
as objects in $\R^3$.
We fix a reference tetrahedron $\tetref$ with top vertex $\widehat\bv_4=(0,0,1)$ and
bottom face $\Tref$, cf. Figure~\ref{fig:tetref}.
The lateral edge connecting $\widehat\bv_j$ and $\widehat\bv_4$ is called $\widehat e_j$, and the face opposite to $\widehat\bv_j$
is called $\widehat f_j$.
The edge that $\widehat f_j$ shares with $\Tref$ is called $\widehat e_{3+j}$.
For $\varepsilon>0$, we denote $\tetref_\varepsilon := \{ (\bx,z)\in\tetref \mid z > \varepsilon \}$.
\begin{figure}[htb]
  \centering
  \psfrag{v1}{$\widehat\bv_1$}
  \psfrag{v2}{$\widehat\bv_2$}
  \psfrag{v3}{$\widehat\bv_3$}
  \psfrag{v4}{$\widehat\bv_4$}
  \psfrag{e1}{$\widehat e_1$}
  \psfrag{e2}{$\widehat e_2$}
  \psfrag{e3}{$\widehat e_3$}
  \psfrag{e4}{$\widehat e_4$}
  \psfrag{e5}{$\widehat e_5$}
  \psfrag{e6}{$\widehat e_6$}
  \psfrag{f1}{$\widehat f_1$}
  \psfrag{f2}{$\widehat f_2$}
  \psfrag{f3}{$\widehat f_3$}
  \psfrag{x}{$x$}
  \psfrag{y}{$y$}
  \psfrag{T}{$\Tref$}
  \includegraphics[width=0.8\textwidth]{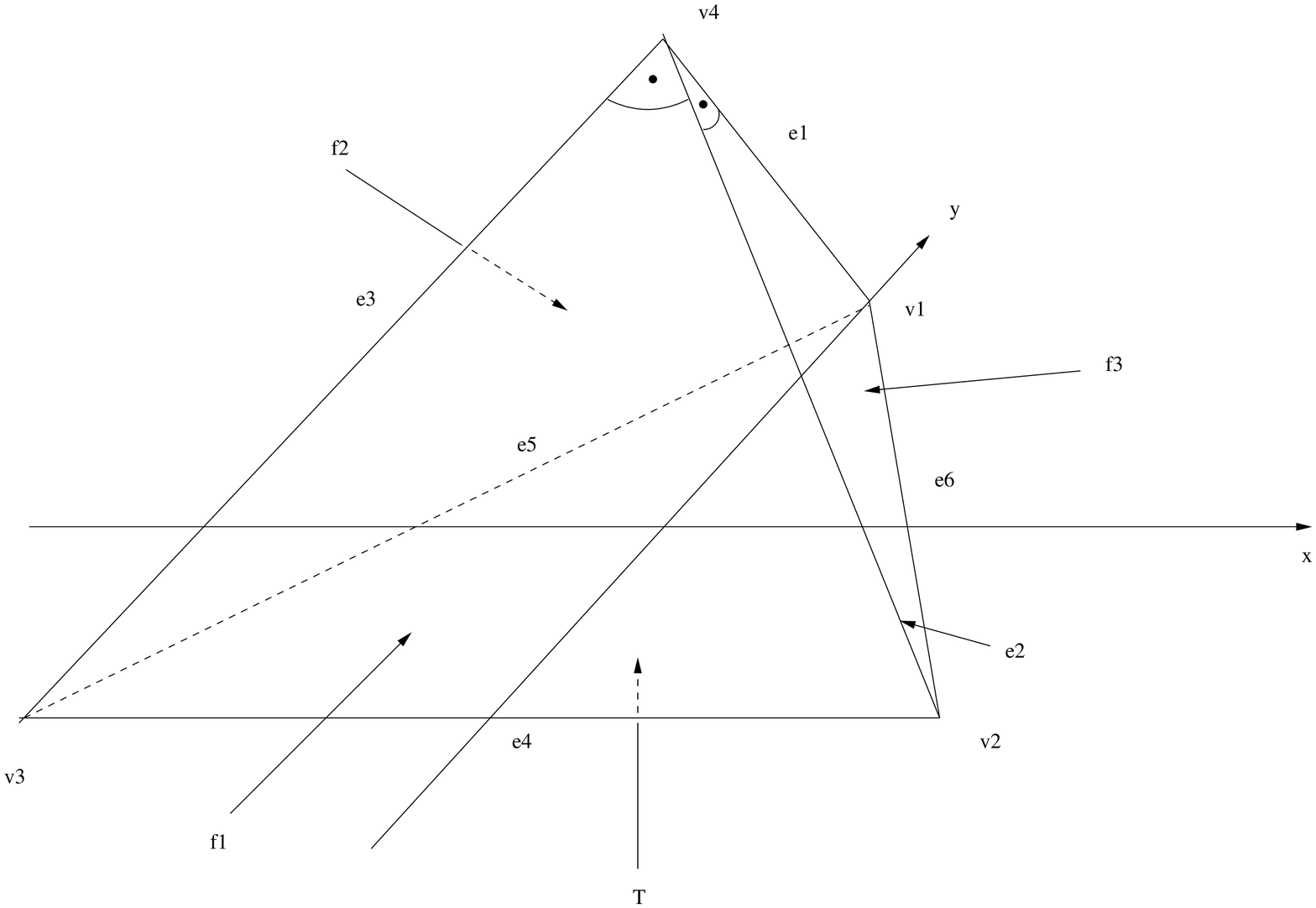}
  \caption{Reference tetrahedron $\tetref$. The top vertex $\widehat \bv_4$ is a right-angled corner.}
  \label{fig:tetref}
\end{figure}

Let $\Pi_{\widehat e_k}, \Pi_{\widehat f_k}:\R^3\rightarrow\R^3$ be the affine functions calculating the orthogonal projections onto
the line spanned by $\widehat e_k$ and the plane spanned by $\widehat f_k$, respectively.
Note that the lateral edges $\widehat e_j$, $j\in\left\{ 1,2,3 \right\}$, intersect in $\widehat\bv_4$ at right angles,
  which implies
  \begin{align}\label{eq:ort}
    \Pi_{\widehat e_j}|_{f_j} = \widehat\bv_4 \qquad\text{ for } j\in\left\{ 1,2,3 \right\}.
  \end{align}
  Furthermore, for $k\neq j$ and $\widehat e_{\ell_{j,k}}$ being the lateral edge shared by $\widehat f_j$ and $\widehat f_k$,
  \begin{align}\label{eq:ort2}
    \Pi_{\what f_k}|_{\what f_j} = \Pi_{\what e_{\ell_{j,k}}}|_{\what f_j}.
  \end{align}
The next result collects statements which are needed later on.
\begin{lemma}\label{lem:ort}
  For $j,k\in\left\{ 1,2,3 \right\}$ and $j\neq k$ there holds for $\alpha,\beta\in\R$ with $-1/2<\beta$,
  \begin{align}
    \| d_{\widehat\bv_j}^{\alpha} d_{\Tref}^\beta v \circ \Pi_{\widehat e_j} \|_{L^2(\tetref)}
    +
    \| d_{\widehat\bv_j}^{\alpha}d_{\Tref}^{1/2+\beta} v \circ \Pi_{\widehat e_j} \|_{L^2(\widehat f_k)}
    &\sim \| d_{\widehat \bv_j}^{1+\alpha+\beta} v \|_{L^2(\widehat e_j)}\label{eq:lemma:lifting-from-triangle:bc:1},\\
    \| d_{\widehat e_{3+k}}^\alpha d_{\Tref}^\beta v\circ \Pi_{\widehat f_k} \|_{L^2(\tetref)}
    &\sim \| d_{\widehat e_{3+k}}^{1/2+\alpha+\beta} v \|_{L^2(\widehat f_k)}\label{eq:lemma:lifting-from-triangle:bc:2},
  \end{align}
  where we recall that $\widehat e_{3+k}$ is the edge that $\widehat f_k$ shares with $\Tref$.
  Furthermore, we have for $k\in\left\{ 1,2,3 \right\}$ and $-1/2<\beta$ the equivalence
  \begin{align}
    \| d_{\Tref}^{\beta} v \circ \Pi_{\widehat e_k} \|_{L^2(\widehat f_k)} \sim |v(\widehat \bv_4)|\label{eq:lemma:lifting-from-triangle:bc:16}.
  \end{align}
\end{lemma}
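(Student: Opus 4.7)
The plan is to reduce everything to elementary one- and two-dimensional integrals by setting up an orthogonal coordinate system $(t_1,t_2,t_3)$ at $\widehat\bv_4$ whose axes run along the three lateral edges; this is possible because the edges meet at right angles at $\widehat\bv_4$ by hypothesis. After an axis-wise affine rescaling, which only perturbs the $\sim$-constants by fixed geometric factors, we may assume that $\tetref$ is the standard unit simplex $\{t_j\ge 0,\ t_1+t_2+t_3\le 1\}$ with $\widehat\bv_4$ at the origin and $\widehat\bv_j=\mathbf e_j$ for $j\in\{1,2,3\}$. In these coordinates the relevant objects become completely explicit: $\widehat f_j=\{t_j=0\}\cap\tetref$; $\Pi_{\widehat e_j}(t_1,t_2,t_3)=t_j\mathbf e_j$; $\Pi_{\widehat f_j}$ simply zeroes the $j$-th coordinate; $d_{\Tref}\sim 1-t_1-t_2-t_3$; and a short computation exploiting that the hyperplane $\{t_k=0\}$ meets $\{t_1+t_2+t_3=1\}$ at a fixed angle gives $d_{\widehat e_{3+k}}\sim t_k+d_{\Tref}$ on $\tetref$, reducing to $d_{\widehat e_{3+k}}\sim d_{\Tref}$ on $\widehat f_k$. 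The composed functions $v\circ\Pi_{\widehat e_j}$ and $v\circ\Pi_{\widehat f_k}$ then depend on only one, respectively two, of the three coordinates, and all three identities reduce to Fubini plus a single slicewise affine rescaling.

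For \eqref{eq:lemma:lifting-from-triangle:bc:1} I would freeze $s:=t_j$ and, in the perpendicular two-dimensional slice of the simplex, substitute the slice variables by $(1-s)$ times new variables ranging in the fixed two-dimensional unit simplex. The Jacobian contributes $(1-s)^2$, the factor $d_{\widehat\bv_j}^{2\alpha}$ contributes $(1-s)^{2\alpha}$ times a bounded function of the rescaled variables, and $d_{\Tref}^{2\beta}$ contributes $(1-s)^{2\beta}(1-\tilde t-\tilde t')^{2\beta}$; the remaining integral over the fixed two-dimensional unit simplex is a finite constant \emph{precisely} because $\beta>-1/2$. What survives is $\int_0^1(1-s)^{2(1+\alpha+\beta)}|v(s\mathbf e_j)|^2\,ds=\|d_{\widehat\bv_j}^{1+\alpha+\beta}v\|_{L^2(\widehat e_j)}^2$, which is the right-hand side. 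The surface term $\|d_{\widehat\bv_j}^\alpha d_{\Tref}^{1/2+\beta}v\circ\Pi_{\widehat e_j}\|_{L^2(\widehat f_k)}$ is handled by the same argument but with a one-dimensional slice; the Jacobian now contributes only $(1-s)$, and the extra $1/2$ in the exponent of $d_{\Tref}$ supplies exactly the missing power $(1-s)^1$, yielding the same $(1-s)^{2(1+\alpha+\beta)}$. For \eqref{eq:lemma:lifting-from-triangle:bc:2}, I would instead freeze the two face coordinates and integrate out the remaining one after the analogous substitution $t_k=(1-t_{k'}-t_{k''})\tau$; combining this with $d_{\widehat e_{3+k}}^2\sim t_k^2+(1-t_{k'}-t_{k''})^2$ and $d_{\Tref}\sim 1-t_1-t_2-t_3$, the inner integral comes out as $(1-t_{k'}-t_{k''})^{1+2\alpha+2\beta}$ times a $\tau$-integral that is again finite for $\beta>-1/2$, which is exactly $d_{\widehat e_{3+k}}^{1+2(\alpha+\beta)}$ on $\widehat f_k$ up to constants.

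Identity \eqref{eq:lemma:lifting-from-triangle:bc:16} is essentially free from \eqref{eq:ort}: on $\widehat f_k$ the projection $\Pi_{\widehat e_k}$ is identically $\widehat\bv_4$, so $v\circ\Pi_{\widehat e_k}$ is the constant $v(\widehat\bv_4)$ on $\widehat f_k$, whence $\|d_{\Tref}^{\beta}v\circ\Pi_{\widehat e_k}\|_{L^2(\widehat f_k)}^2=|v(\widehat\bv_4)|^2\int_{\widehat f_k}d_{\Tref}^{2\beta}\,dA$, and the surface integral is a finite positive constant precisely for $\beta>-1/2$. The only genuinely delicate point in the whole argument is keeping track of this single integrability threshold $\beta>-1/2$, which enters in exactly the same role in each of the three slice integrals; everything else is elementary bookkeeping of exponents.
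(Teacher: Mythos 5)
Your proposal is correct and follows essentially the same route as the paper's proof: both reduce to the standard simplex via the orthogonal coordinate frame at the right-angled vertex $\widehat\bv_4$ and then evaluate the weighted integrals by Fubini and a slicewise rescaling, with the threshold $\beta>-1/2$ entering through the integrability of $d_{\Tref}^{2\beta}$ on each fiber and \eqref{eq:lemma:lifting-from-triangle:bc:16} coming directly from \eqref{eq:ort}. The only difference is cosmetic: the paper writes out just the volume term of \eqref{eq:lemma:lifting-from-triangle:bc:1} explicitly and asserts the rest follow the same way, whereas you spell out all the cases.
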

\begin{proof}
  We may as well rotate and scale the setting to be able to use the standard Cartesian coordinate system
  and the orthogonal projection $\Pi_x$ onto the x-axis to show
  \begin{align*}
    \| d_{\widehat\bv_j}^{\alpha}d_{\Tref}^\beta v \circ \Pi_{\widehat e_j} \|_{L^2(\tetref)}^2
    &\sim \int_{x=0}^1 \int_{y=0}^{1-x}\int_{z=0}^{1-x-y} (1-x)^{2\alpha}(1-x-y-z)^{2\beta} \left( v\circ \Pi_x (x,y,z) \right)^2 \,dzdydx\\
    &= \int_{x=0}^1 (1-x)^{2\alpha}v(x,0,0)^2 \int_{y=0}^{1-x}\int_{z=0}^{1-x-y} (1-x-y-z)^{2\beta} \,dzdydx\\
    &= \frac{1}{(2\beta+1)(2\beta+2)} \int_{x=0}^1 v(x,0,0)^2 (1-x)^{2\alpha + 2\beta+2}\,dx \sim \| d_{\widehat \bv_j}^{1+\alpha+\beta} v \|_{L^2(\widehat e_j)}^2.
  \end{align*}
  The remaining results
  in~\eqref{eq:lemma:lifting-from-triangle:bc:1}--\eqref{eq:lemma:lifting-from-triangle:bc:2}
  follow the same way.
  The result~\eqref{eq:lemma:lifting-from-triangle:bc:16} follows from the observation~\eqref{eq:ort}.
\end{proof}
\section{Applications}
\subsection{Finite element inverse estimates}\label{section:invest}
Inverse estimates are a common tool in the analysis of numerical methods. They allow for
control of Sobolev norms of discrete functions via weighted versions of weaker norms.
If the norms involved are of integer order (including dual ones), such estimates are usually
proven locally using norm equivalence on finite dimensional spaces.
Consequently, arguments are more involved in the case of fractional order norms, as those are non-local.
A widely used approach here is to characterize fractional order Sobolev spaces as interpolation spaces,
cf. Section~\ref{sec:defs}. In this case it is necessary to know that the interpolation norm obtained
from the discrete spaces is equivalent to the norm obtained by interpolating the continuous spaces.
In the case of $hp$-finite element spaces, our main Theorem~\ref{thm:main} applies.\\

For a positive, measurable function $w$ on $\Omega$ we define $L^2(\Omega;\omega)$ to be the space of functions with finite norm
$\| f \|_{L^2(\Omega;\omega)}^2 := \int_\Omega f(x)^2\omega(x)\;dx.$
The next result can be found in~\cite[Lemma~23.1]{Tartar}.
\begin{proposition}\label{prop:L2}
  Let $\omega_0,\omega_1$ be positive, measurable functions on $\Omega$. For $\theta\in(0,1)$ it holds that
  \begin{align*}
    [L^2(\Omega;\omega_0),L^2(\Omega;\omega_1)]_{\theta} = L^2(\Omega;\omega_0^{1-\theta}\omega_1^\theta)
  \end{align*}
  with equivalent norms, with constants depending only on $\theta$.
\end{proposition}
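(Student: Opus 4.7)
The plan is to compute the K-functional explicitly, exploiting that for two weighted $L^2$ spaces the infimum defining $K(t,u)$ decouples pointwise in $v$. Fix $u\in L^2(\Omega;\omega_0)$ and $t>0$. Then
\[
K^2(t,u) = \inf_{v} \int_\Omega \bigl[(u(x)-v(x))^2 \omega_0(x) + t^2 v(x)^2 \omega_1(x)\bigr]\,dx
\]
can be minimized in the integrand at almost every point. Setting the derivative with respect to $v(x)$ to zero yields the optimal $v^\star(x) = u(x)\omega_0(x)/(\omega_0(x)+t^2\omega_1(x))$, and substituting back gives, after straightforward simplification,
\[
K^2(t,u) = \int_\Omega u(x)^2\, \frac{t^2\, \omega_0(x)\,\omega_1(x)}{\omega_0(x) + t^2 \omega_1(x)}\,dx.
\]

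Next I would plug this into the definition of the interpolation norm and swap integrals by Tonelli (legitimate for nonnegative integrands):
\[
\| u \|_{[L^2(\Omega;\omega_0),L^2(\Omega;\omega_1)]_\theta}^2 = \int_\Omega u(x)^2\,\omega_0(x)\omega_1(x) \int_0^\infty \frac{t^{1-2\theta}}{\omega_0(x) + t^2\omega_1(x)}\,dt\,dx.
\]
The inner one-dimensional integral is evaluated by the substitution $s = t\sqrt{\omega_1(x)/\omega_0(x)}$, which reduces it to
\[
\omega_0(x)^{-\theta}\omega_1(x)^{\theta-1}\int_0^\infty \frac{s^{1-2\theta}}{1+s^2}\,ds.
\]
The remaining $s$-integral is a finite constant $C(\theta)$ for $\theta\in(0,1)$ (essentially a Beta function value). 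Combining with the $\omega_0\omega_1$ prefactor collapses the weight to $\omega_0^{1-\theta}\omega_1^\theta$, and I obtain
\[
\| u \|_{[L^2(\Omega;\omega_0),L^2(\Omega;\omega_1)]_\theta}^2 = C(\theta)\, \| u \|_{L^2(\Omega;\omega_0^{1-\theta}\omega_1^\theta)}^2.
\]
Equality of the two spaces follows at once from the equality of the two norms (up to the explicit multiplicative constant).

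The argument is essentially routine, and I would not expect any serious obstacle. The only point requiring mild care is the behaviour when $\omega_0$ or $\omega_1$ vanishes or blows up on positive-measure sets: the pointwise minimizer should then be read as $v^\star = u$ where $\omega_1 = 0$ and $v^\star = 0$ where $\omega_0 = 0$, and the integrand in $K^2(t,u)$ extends continuously to these limits, so the explicit formula remains valid almost everywhere. Tonelli covers the Fubini exchange without integrability hypotheses, and the finiteness of $C(\theta)$ for $\theta\in(0,1)$ is the only place where the restriction on $\theta$ enters.
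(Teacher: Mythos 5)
Your computation is correct and is exactly the standard argument behind the cited result (\cite[Lemma~23.1]{Tartar}, for which the paper itself gives no proof): pointwise minimization yields $K^2(t,u)=\int_\Omega u^2\,t^2\omega_0\omega_1/(\omega_0+t^2\omega_1)\,dx$, and the Tonelli swap plus the Beta-integral $\int_0^\infty s^{1-2\theta}(1+s^2)^{-1}\,ds=\pi/(2\sin\pi\theta)$ gives the exact identity of norms. The only point worth stating explicitly is that the pointwise minimizer is admissible, i.e. $v^\star\in L^2(\Omega;\omega_1)$; this follows from $(\omega_0+t^2\omega_1)^2\geq 4t^2\omega_0\omega_1$, which gives $ (v^\star)^2\omega_1\leq u^2\omega_0/(4t^2)$.
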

For a mesh $\cT$ of $\Omega$ and a space $\cS^{\bp,1}(\cT)$, we define functions $h,p\in L^\infty(\Omega)$ by $h|_K = h_K$ and $p|_K = \bp_K$.
\begin{lemma}\label{lem:invest}
  For $\theta\in[0,1]$ there exists a constant $C>0$, such that for any space $\cS^{\bp,1}(\cT)$ it holds
  \begin{align*}
    \| h^{1-\theta} p^{-2(1-\theta)} \nabla u_h \|_{L^2(\Omega)} \leq C \| u_h \|_{[L^2(\Omega),H^1(\Omega)]_\theta}\quad
    \text{ for all } u_h\in \cS^{\bp,1}(\cT).
  \end{align*}
  Furthermore, this result is also valid if we use the discrete space $\wilde\cS^{\bp,1}(\cT)$ and the norm
  \\ $\| \cdot \|_{[L^2(\Omega),\wilde H^1(\Omega)]_\theta}$.
\end{lemma}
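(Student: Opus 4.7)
The plan is to deduce the bound by interpolation between its well-known endpoint cases $\theta=0$ and $\theta=1$, using Theorem~\ref{thm:main} to identify the interpolation norm on the discrete space and Proposition~\ref{prop:L2} to identify the interpolation norm on the weighted $L^2$ target.

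First I would view the gradient as a linear operator $T \colon u_h \mapsto \nabla u_h$ and record the two endpoint estimates. The case $\theta=1$ is trivial: $\|\nabla u_h\|_{L^2(\Omega)}\le \|u_h\|_{H^1(\Omega)}$, so $T$ maps $(\cS^{\bp,1}(\cT),\|\cdot\|_{H^1(\Omega)})$ boundedly into $L^2(\Omega)$. The case $\theta=0$ uses the standard elementwise $hp$ inverse estimate $\|\nabla u_h\|_{L^2(K)}\lesssim p_K^2 h_K^{-1}\|u_h\|_{L^2(K)}$, which, after introducing the weight $\omega_0 := h^2 p^{-4}\in L^\infty(\Omega)$, is equivalent to
\begin{align*}
  \|\nabla u_h\|_{L^2(\Omega;\omega_0)}^2 = \sum_{K\in\cT} h_K^2 p_K^{-4} \|\nabla u_h\|_{L^2(K)}^2 \lesssim \|u_h\|_{L^2(\Omega)}^2.
\end{align*}
So $T$ also maps $(\cS^{\bp,1}(\cT),\|\cdot\|_{L^2(\Omega)})$ boundedly into $L^2(\Omega;\omega_0)$.

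Next I would apply the K-method interpolation theorem to these two mapping properties of $T$. On the source side, Theorem~\ref{thm:main} identifies
\begin{align*}
  \bigl[(\cS^{\bp,1}(\cT),\|\cdot\|_{L^2(\Omega)}),(\cS^{\bp,1}(\cT),\|\cdot\|_{H^1(\Omega)})\bigr]_\theta
  = (\cS^{\bp,1}(\cT),\|\cdot\|_{[L^2(\Omega),H^1(\Omega)]_\theta})
\end{align*}
with uniform norm equivalence. On the target side, setting $\omega_1\equiv 1$, Proposition~\ref{prop:L2} yields
\begin{align*}
  [L^2(\Omega;\omega_0),L^2(\Omega;\omega_1)]_\theta = L^2(\Omega;\omega_0^{1-\theta}\omega_1^\theta) = L^2(\Omega; h^{2(1-\theta)}p^{-4(1-\theta)}),
\end{align*}
whose norm is exactly $\|h^{1-\theta}p^{-2(1-\theta)}\cdot\|_{L^2(\Omega)}$. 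Interpolating the two endpoint bounds for $T$ therefore gives
\begin{align*}
  \|h^{1-\theta}p^{-2(1-\theta)}\nabla u_h\|_{L^2(\Omega)} \lesssim \|u_h\|_{[L^2(\Omega),H^1(\Omega)]_\theta},
\end{align*}
which is the claim for $\cS^{\bp,1}(\cT)$. The case of $\wilde\cS^{\bp,1}(\cT)$ is identical, simply replacing $H^1(\Omega)$ by $\wilde H^1(\Omega)$ in the source spaces and invoking the second equivalence in Theorem~\ref{thm:main}; note that the endpoint $\theta=1$ bound still holds trivially and the $\theta=0$ bound is a purely local inverse estimate that is insensitive to boundary conditions.

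There is no real obstacle: once Theorem~\ref{thm:main} and Proposition~\ref{prop:L2} are available, the argument is a textbook application of the interpolation theorem to the gradient operator with the right weighted $L^2$ target. The only point that requires care is to ensure that the two norms used in the interpolation on the source are those on the \emph{same} discrete space, which is precisely what Theorem~\ref{thm:main} allows us to convert into the continuous fractional Sobolev norm.
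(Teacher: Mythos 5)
Your proposal is correct and follows essentially the same route as the paper: the same two endpoint bounds ($\|\nabla u_h\|_{L^2}\le\|u_h\|_{H^1}$ and the $hp$-inverse estimate $\|hp^{-2}\nabla u_h\|_{L^2}\lesssim\|u_h\|_{L^2}$), Proposition~\ref{prop:L2} to identify the weighted target space, and Theorem~\ref{thm:main} to identify the discrete source interpolation norm. The only cosmetic difference is that you invoke the abstract interpolation theorem for the operator $u_h\mapsto\nabla u_h$, whereas the paper unwinds the same argument by hand, inserting the competitor $v=\partial_j v_h$ into the K-functional of the weighted $L^2$ pair.
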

\begin{proof}
  Obviously, $\| \nabla w_h \|_{L^2(\Omega)}\leq \| w_h \|_{H^1(\Omega)}$ for all $w_h \in\cS^{\bp,1}(\cT)$. On the other hand,
  \begin{align}\label{lem:invest:eq1}
    \| h p^{-2} \nabla w_h \|_{L^2(\Omega)} \leq C \| w_h \|_{L^2(\Omega)},
  \end{align}
  which follows from simple scaling arguments, combined with a p-explicit inverse estimate on
  the reference element, cf.~\cite[Thm.~4.76]{Schwab_98}. Hence, 
  \begin{align*}
    \| h^{1-\theta} p^{-2(1-\theta)} \nabla u_h \|_{L^2(\Omega)}^2 &= 
    \sum_{j=1}^2 \| h^{1-\theta} p^{-2(1-\theta)} \partial_j u_h \|_{L^2(\Omega)}^2\\
    &\hspace{-.48cm}\stackrel{\text{Prop.}~\ref{prop:L2}}{\lesssim} \sum_{j=1}^2
    \int_0^\infty t^{-2\theta-1} \inf_{v\in L^2(\Omega)} \| hp^{-2} (\partial_j u_h-v) \|_{L^2(\Omega)}^2 + t^2\| v \|_{L^2(\Omega)}^2\,dt\\
    &\leq\sum_{j=1}^2
    \int_0^\infty t^{-2\theta-1} \inf_{v_h\in\cS^{\bp,1}(\cT)} \| hp^{-2}\partial_j (u_h-v_h) \|_{L^2(\Omega)}^2 + t^2\| \partial_j v_h \|_{L^2(\Omega)}^2\,dt\\
    &\hspace{-.15cm}\stackrel{\eqref{lem:invest:eq1}}{\leq}
    \int_0^\infty t^{-2\theta-1} \inf_{v_h\in\cS^{\bp,1}(\cT)} \| u_h-v_h \|_{L^2(\Omega)}^2 + t^2\| v_h \|_{H^1(\Omega)}^2\,dt.
  \end{align*}
  On the right-hand side, we have the norm of $u_h$ in the space
  $\left[ (\cS^{\bp,1}(\cT),\| \cdot \|_{L^2(\Omega)}), (\cS^{\bp,1}(\cT),\| \cdot \|_{H^1(\Omega)}) \right]_{\theta}$, which
  is equivalent to $\| u_h \|_{[L^2(\Omega),H^1(\Omega)]_\theta}$ according to Theorem~\ref{thm:main}.
  The last statement of the lemma clearly follows from the first one as
  $[L^2(\Omega),\wilde H^1(\Omega)]_\theta\subset[L^2(\Omega),H^1(\Omega)]_\theta$.
\end{proof}
In the following, we assume a global quasi-uniform mesh and constant polynomial degree distribution, that is
\begin{align*}
  \max_{K\in\cT} h_K \leq C_{\rm qu} \min_{K\in\cT} h_K 
\end{align*}
for some constant $C_{\rm qu}>0$, and $p_K = p$ for all $K\in\cT$.
\begin{corollary}
  Let $\cT$ be a quasi-uniform mesh and $\wilde\cS^{\bp,1}(\cT)$ have constant polynomial degree distribution.
  Let $0 \leq \theta\leq\mu\leq1$. Then there exists a constant $C>0$ (depending on $C_{\rm qu}$) such that
  \begin{align*}
    \frac{h^{\mu-\theta}}{p^{2(\mu-\theta)}}
    \| u_h \|_{[L^2(\Omega),H^1(\Omega)]_\mu}
    \leq C \| u_h \|_{[L^2(\Omega),H^1(\Omega)]_\theta}
     \quad\text{ for all } u_h\in \cS^{\bp,1}(\cT).
  \end{align*}
\end{corollary}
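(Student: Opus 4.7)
The strategy is a standard K-method interpolation argument: use Lemma~\ref{lem:invest} at the exponent $\theta$ to turn $\|u_h\|_{H^1}$ into a discrete bound in terms of $\|u_h\|_{[L^2(\Omega),H^1(\Omega)]_\theta}$, and then apply an interpolation inequality to recover the intermediate $\mu$-norm. Quasi-uniformity is used precisely to pull the weights out of the norms.

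First I would dispose of the trivial endpoint cases. If $\mu=\theta$ there is nothing to prove, and if $\theta=1$ the assumption $\theta\le\mu\le 1$ forces $\mu=1$, again trivial. So assume $0\le\theta<\mu\le 1$. Under quasi-uniformity, $h$ is comparable to a single value $h_0$ on every element, and $p$ is globally constant, so Lemma~\ref{lem:invest} gives
\begin{equation*}
  h_0^{1-\theta}p^{-2(1-\theta)}\,\|\nabla u_h\|_{L^2(\Omega)}\lesssim \|u_h\|_{[L^2(\Omega),H^1(\Omega)]_\theta}.
\end{equation*}
Combining with the trivial bound $\|u_h\|_{L^2(\Omega)}\le \|u_h\|_{[L^2(\Omega),H^1(\Omega)]_\theta}$ and assuming (without loss of generality, by adjusting the constant) that $h_0\lesssim p^2$, yields
\begin{equation*}
  \|u_h\|_{H^1(\Omega)}\lesssim h_0^{-(1-\theta)}p^{2(1-\theta)}\,\|u_h\|_{[L^2(\Omega),H^1(\Omega)]_\theta}.
\end{equation*}

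Next I would invoke the reiteration theorem (standard for the K-method, see e.g.~\cite{BerghL_76,Tartar}) together with the Lions--Peetre interpolation inequality. Setting $s:=(\mu-\theta)/(1-\theta)\in[0,1]$, reiteration identifies $[L^2(\Omega),H^1(\Omega)]_\mu$ with $[\,[L^2(\Omega),H^1(\Omega)]_\theta,\,H^1(\Omega)\,]_s$ with equivalent norms (constants depending only on $\theta$, $\mu$), so
\begin{equation*}
  \|u_h\|_{[L^2(\Omega),H^1(\Omega)]_\mu}\lesssim \|u_h\|_{[L^2(\Omega),H^1(\Omega)]_\theta}^{1-s}\,\|u_h\|_{H^1(\Omega)}^{s}.
\end{equation*}
Substituting the estimate from the previous step and using $s(1-\theta)=\mu-\theta$ collapses the right-hand side to $h_0^{-(\mu-\theta)}p^{2(\mu-\theta)}\|u_h\|_{[L^2(\Omega),H^1(\Omega)]_\theta}$, which is precisely the claimed bound after multiplying by $h_0^{\mu-\theta}p^{-2(\mu-\theta)}$.

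There is no real obstacle here; the work was done in proving Theorem~\ref{thm:main} (used implicitly through Lemma~\ref{lem:invest}). The only minor points to be careful about are (i) verifying that the reiteration theorem applies to our endpoint spaces (which it does, as $H^1(\Omega)\hookrightarrow L^2(\Omega)$ continuously and densely) and (ii) absorbing the quasi-uniformity constant $C_{\mathrm{qu}}$ when replacing the pointwise weight $h(x)$ by the single scalar $h_0$; both are routine.
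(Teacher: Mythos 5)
Your proof is correct and follows essentially the same route as the paper: Lemma~\ref{lem:invest} (plus the trivial $L^2$ bound) to control $h^{1-\theta}p^{-2(1-\theta)}\|u_h\|_{H^1(\Omega)}$ by $\|u_h\|_{[L^2(\Omega),H^1(\Omega)]_\theta}$, then the reiteration theorem with $s=(\mu-\theta)/(1-\theta)$ and the Lions--Peetre interpolation inequality. The extra care you take with the endpoint cases and with absorbing $h_0^{1-\theta}p^{-2(1-\theta)}\lesssim 1$ is exactly what the paper sweeps into ``obvious bounds.''
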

\begin{proof}
  According to Lemma~\ref{lem:invest} and obvious bounds,
  $h^{1-\theta}p^{-2(1-\theta)}\| u_h \|_{H^1(\Omega)} \leq C \| u_h \|_{[L^2(\Omega),H^1(\Omega)]_\theta}$.
  Note that the reiteration theorem~\cite[Thm.~3.5.3]{BerghL_76} gives $[L^2(\Omega),H^1(\Omega)]_\mu = [ H^\theta(\Omega),H^1(\Omega) ]_{s}$
  with $s=(\mu-\theta)/(1-\theta)$. Hence, {by common interpolation estimates, cf.~\cite[Thm.~1.3.3]{Triebel},}
  \begin{align*}
    \| u_h \|_{[L^2(\Omega),H^1(\Omega)]_\mu} \lesssim \| u_h \|_{[L^2(\Omega),H^1(\Omega)]_\theta}^{1-s}\| u_h \|_{H^1(\Omega)}^s
    \lesssim \| u_h \|_{[L^2(\Omega),H^1(\Omega)]_\theta} \frac{h^{\theta-\mu}}{p^{2(\theta-\mu)}}.
  \end{align*}
\end{proof}
\section{Tools}
In the present section, we collect different tools from the literature and our previous works.
\subsection{Interpolation spaces} 
The first result of the following proposition can be found in~\cite[Thm.~6]{BenBelgacem_94}, while the second
one is a consequence of results available in~\cite{BernardiM_97}, cf. also~\cite[Lemma~5.6]{KMR_19}.
\begin{proposition}\label{prop:GL}
  \begin{enumerate}[(i)]
  \item \label{item:prop:GL-i} There holds for $\theta \in (0,1)$ and $p\in\N_0$
    \begin{align*}
      [ ( \cQ^p,\|\cdot\|_{L^2(\Sref)}),(\cQ^p,\|\cdot\|_{H^1(\Sref)})]_{\theta} 
      = (\cQ^p,\|\cdot\|_{[L^2(\Sref),H^1(\Sref)]_\theta})
      \qquad \mbox{(equivalent norms)}.
    \end{align*}
    and
    \begin{align*}
      [(\wilde\cQ^p,\|\cdot\|_{L^2(\Sref)}),(\wilde\cQ^p,\|\cdot\|_{\wilde H^1(\Sref)})]_{\theta} 
      = (\wilde\cQ^p,\|\cdot\|_{[L^2(\Sref),\wilde H^1(\Sref)]_\theta})
      \qquad \mbox{(equivalent norms)}.
    \end{align*}
  \item \label{item:prop:GL-ii} Let $i_p: C(\overline{\Sref}) \rightarrow \cQ^p$
    be the tensor-product Gau{\ss}-Lobatto interpolation
    operator. Then for every $\theta \in [0,1]$ there exists $C > 0$ such that for all $p$, $q \in \N_0$ the following
    stability estimate holds for the operator $i_p$:
    $$
    \|i_p\|_{(\cQ^p,\|\cdot\|_{H^\theta(\Sref)})  \leftarrow (\cQ^q,\|\cdot\|_{H^\theta(\Sref)})}
    \leq C ( 1 + q/(p+1))^{2-\theta}
    $$
  \end{enumerate}
\end{proposition}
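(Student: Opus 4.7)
The plan is to handle the two parts of Proposition~\ref{prop:GL} separately, relying on the tensor-product structure of $\Sref$ which is not available in the multi-element setting (and is precisely why a single-element result can be cited to bootstrap the main theorem).

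For part~\eqref{item:prop:GL-i}, the strategy is the projection-based criterion mentioned earlier in the paper (originating in \cite{ArioliL_09}): it suffices to exhibit a linear projection $P_p:L^2(\Sref)\to\cQ^p$ that is \emph{simultaneously} bounded in $L^2(\Sref)$ and in $H^1(\Sref)$, uniformly in $p$. Such an operator is classical in the single-element tensor-product setting: one takes a tensor product of 1D projectors (the $H^1$-projection into $\cP^p$ with boundary-preserving correction, or the Bernardi--Maday modal projector), whose one-dimensional stability in $L^2(\hat I)$ and $H^1(\hat I)$ is $p$-independent. For the homogeneous-trace variant $\wilde\cQ^p$ one uses the analogous tensor construction based on the 1D projector onto $\wilde\cP^p$ (which is well defined once the endpoint Lagrange nodes are treated separately). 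Granted $P_p$, the equivalence is obtained by the standard K-functional comparison: given $u_p\in\cQ^p$ and any $v\in H^1(\Sref)$ admissible in the continuous K-functional, the pair $(u_p-P_p v,P_p v)$ is admissible in the discrete one, and
\begin{align*}
  \|u_p-P_p v\|_{L^2(\Sref)} &= \|(I-P_p)(u_p-v)\|_{L^2(\Sref)}\lesssim \|u_p-v\|_{L^2(\Sref)},\\
  \|P_p v\|_{H^1(\Sref)} &\lesssim \|v\|_{H^1(\Sref)},
\end{align*}
so that $K_p^2(t,u_p)\lesssim K^2(t,u_p)$ uniformly in $t$ and $p$; the reverse bound is trivial from $\cQ^p\subset H^1(\Sref)$.

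For part~\eqref{item:prop:GL-ii} the plan is to prove the two endpoint stability bounds and then interpolate them using part~\eqref{item:prop:GL-i}. At $\theta=0$, classical estimates for 1D Gauss--Lobatto interpolation give $\|i_p v\|_{L^2(\hat I)}\lesssim (1+q/(p+1))^2 \|v\|_{L^2(\hat I)}$ for $v\in\cP^q$ (equivalent-norm arguments at the GL nodes combined with the Markov inequality to pass from nodal values to $L^2$); tensorization yields the same $(1+q/(p+1))^2$ on $\cQ^q$. At $\theta=1$, one upgrades to $\|i_p v\|_{H^1(\Sref)}\lesssim (1+q/(p+1))\|v\|_{H^1(\Sref)}$ by combining the $L^2$ bound with a 1D inverse inequality that trades one factor of $(1+q/(p+1))$ for a derivative (this is exactly the content of the second half of Bernardi--Maday, cf.\ \cite{BernardiM_97}). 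Applying the interpolation theorem to the linear map $i_p:\cQ^q\to\cQ^p$ between the two scales
\[
[(\cQ^q,\|\cdot\|_{L^2(\Sref)}),(\cQ^q,\|\cdot\|_{H^1(\Sref)})]_\theta\quad\text{and}\quad [(\cQ^p,\|\cdot\|_{L^2(\Sref)}),(\cQ^p,\|\cdot\|_{H^1(\Sref)})]_\theta,
\]
produces an operator norm bounded by $(1+q/(p+1))^{2(1-\theta)+\theta}=(1+q/(p+1))^{2-\theta}$. Part~\eqref{item:prop:GL-i} is what identifies both discrete interpolation norms with $\|\cdot\|_{H^\theta(\Sref)}$ with $p$-independent constants, so the conclusion is exactly the stated bound.

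The main obstacle is the construction of the simultaneously $L^2$- and $H^1$-stable projector in (i), especially for $\wilde\cQ^p$: one must avoid any loss of a polynomial factor in $p$ that a naive use of the $L^2$-projection followed by correction would incur, which requires a careful modal decomposition. Once this is in hand, the rest of both parts is essentially abstract interpolation theory together with well-known 1D polynomial inequalities tensorized in the two reference coordinates.
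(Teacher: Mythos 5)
The paper does not actually prove Proposition~\ref{prop:GL}: both parts are taken from the literature, part~(\ref{item:prop:GL-i}) being quoted from \cite[Thm.~6]{BenBelgacem_94} and part~(\ref{item:prop:GL-ii}) from \cite{BernardiM_97} via \cite[Lemma~5.6]{KMR_19}. Measured against that, your plan for~(\ref{item:prop:GL-i}) is a genuinely different route from the cited source: Ben Belgacem's proof goes through the trace-space characterization and explicit polynomial lifting operators (the same philosophy the paper then develops for the multi-element case), whereas you invoke the projector criterion of \cite{ArioliL_09}. That route is legitimate, and the paper's own introduction confirms the key ingredient --- a projector onto $\cQ^p$ (resp.\ $\wilde\cQ^p$) simultaneously $L^2$- and $H^1$-stable uniformly in $p$ --- exists in the single-element tensor-product setting, citing \cite{BernardiM_99,BraessPS_09}. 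Your K-functional comparison using $(u_p - P_p v, P_p v)$ is correct. The entire weight of the argument rests on the existence of that projector, especially for $\wilde\cQ^p$ with vanishing traces on all four edges; you flag this as the main obstacle but do not construct it, which puts your proof at the same level of reliance on external results as the paper's bare citation. For~(\ref{item:prop:GL-ii}) your route (endpoint stability at $\theta=0,1$, abstract interpolation of $i_p$ between the discrete scales, then part~(\ref{item:prop:GL-i}) to identify the discrete interpolation norms with $\|\cdot\|_{H^\theta(\Sref)}$) is essentially the intended argument of \cite[Lemma~5.6]{KMR_19}.

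One concrete slip in~(\ref{item:prop:GL-ii}): your bookkeeping at the $L^2$ endpoint is internally inconsistent. You assert a one-dimensional bound $\|i_p v\|_{L^2(\hat I)}\lesssim (1+q/(p+1))^2\|v\|_{L^2(\hat I)}$ and then claim that ``tensorization yields the same $(1+q/(p+1))^2$''. Tensorization multiplies the operator norms of the two one-dimensional factors, so with a 1D constant of $(1+q/(p+1))^2$ you would end up with $(1+q/(p+1))^4$ in two dimensions, which destroys the final exponent $2-\theta$. The correct accounting is: the classical 1D Gau{\ss}--Lobatto stability constant on $\cP^q$ is $(1+q/(p+1))$, tensorization squares it to give $(1+q/(p+1))^2$ on $\cQ^q$, and the $H^1$ endpoint constant $(1+q/(p+1))$ arises from combining one factor of 1D $H^1$-stability (with $O(1)$ constant, after trading a factor against a derivative) with one factor of 1D $L^2$-stability. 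With these endpoints the interpolation exponent $2(1-\theta)+\theta=2-\theta$ closes as you state. This is a fixable arithmetic error rather than a structural one, but as written the step does not hold together.
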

Due to their scaling properties, it is often preferable to work with seminorms instead of full norms.
When working with interpolation spaces, the interpolation between seminorms inherits these advantageous properties
as is stated in the following proposition, taken from~\cite[Lemma~4.1]{KMR_19}.
\begin{proposition}[{\cite[Lem.~4.1]{KMR_19}}]\label{lemma:K-vs-k}
  Let $X_1 \subseteq X_0$ be two continuously embedded Banach spaces with norms 
  $\|\cdot\|_0$ and $\|\cdot\|_1 := H^{-1} \|\cdot\|_0 + |\cdot |_1$, where $|\cdot|_1$ is a seminorm
  and $H > 0$. Introduce the following two $K$-functionals: 
  \begin{align*}
    K(u,t)^2:= \inf_{v \in X_1} \|u - v\|_{0}^2 + t \|v\|_1^2,
    \qquad k(u,t)^2:= \inf_{v \in X_1} \|u - v\|_{0}^2 + t|v|_1^2.
  \end{align*}
  For $\theta \in (0,1)$ introduce the seminorm $|\cdot|_\theta$ and the norms 
  $\|\cdot\|_\theta$ and $\|\cdot\|_{\tilde\theta}$ by
  \begin{align*}
    |u|^2_\theta &= \int_{t=0}^\infty t^{-2\theta} k^2(u,t) \frac{dt}{t}, \\
    \|u\|^2_\theta &= \int_{t=0}^\infty t^{-2\theta} K^2(u,t) \frac{dt}{t}, \\
    \|u\|^2_{\tilde \theta} &= H^{-2\theta}  \|u\|^2_0 + |u|^2_\theta. 
  \end{align*}
  Then there exists $C>0$, which depends solely on $\theta$ (in particular, it is independent of $H$)
  such that
  \begin{align*}
    C^{-1} \|u\|_{\theta} \leq \|u\|_{\tilde \theta} \leq C \|u\|_{\theta}. 
  \end{align*}
\end{proposition}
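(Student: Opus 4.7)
The main mechanism is the quadratic decomposition $\|v\|_1^2 \sim H^{-2}\|v\|_0^2 + |v|_1^2$, which follows from expanding $(H^{-1}\|v\|_0 + |v|_1)^2$ and applying AM--GM. Inserting this into the definition of $K^2$ rewrites it, up to absolute constants, as
\[
K^2(u,t) \sim \inf_{v\in X_1}\left(\|u-v\|_0^2 + tH^{-2}\|v\|_0^2 + t|v|_1^2\right),
\]
which differs from $k^2(u,t)$ only by an extra zeroth-order penalty $tH^{-2}\|v\|_0^2$. This term is active only at scales where $tH^{-2}$ is of order one, which signals that the $K$-integral should be split at the corresponding natural transition scale in $t$.

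For the direction $\|u\|_{\tilde\theta} \lesssim \|u\|_\theta$, the seminorm part $|u|_\theta \leq \|u\|_\theta$ is immediate from $|v|_1 \leq \|v\|_1$ and hence $k \leq K$ pointwise. For the $H^{-2\theta}\|u\|_0^2$ part I would use the elementary Young-type inequality $\|u-v\|_0^2 + \sigma\|v\|_0^2 \gtrsim \frac{\sigma}{1+\sigma}\|u\|_0^2$, applied with $\sigma = tH^{-2}$, to conclude that $K^2(u,t) \gtrsim \|u\|_0^2$ as soon as $t$ is past the transition scale. Integrating $t^{-2\theta-1}$ over a fixed dyadic interval in that large-$t$ regime then produces a contribution of order $H^{-2\theta}\|u\|_0^2$ inside $\|u\|_\theta^2$.

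For the reverse inequality $\|u\|_\theta \lesssim \|u\|_{\tilde\theta}$, which is the real content, I would transport near-optimal competitors for $k(u,t)$ into the $K$-functional. Given $t$, pick $v^* = v^*(t) \in X_1$ with $\|u-v^*\|_0^2 + t|v^*|_1^2 \lesssim k^2(u,t)$; bound $\|v^*\|_0^2 \lesssim \|u\|_0^2 + k^2(u,t)$ by the triangle inequality; and feed $v^*$ into the equivalent form of $K^2$ above. The outcome is a pointwise bound
\[
K^2(u,t) \lesssim (1+tH^{-2})\,k^2(u,t) + tH^{-2}\|u\|_0^2,
\]
which I would pair with the trivial bound $K^2(u,t) \leq \|u\|_0^2$ obtained from $v=0$. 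Splitting the integral defining $\|u\|_\theta^2$ at the transition scale and applying the first bound below, the second above, the $k^2$-parts reassemble into $|u|_\theta^2$ while the $\|u\|_0^2$-pieces contribute a term comparable to $H^{-2\theta}\|u\|_0^2$.

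The only real subtlety is bookkeeping in this last step: one must verify that both the ``small-$t$'' integral of $t^{-2\theta-1}\cdot tH^{-2}\|u\|_0^2$ and the ``large-$t$'' integral of $t^{-2\theta-1}\|u\|_0^2$ produce the same power of $H$, matching the $H^{-2\theta}\|u\|_0^2$ term of $\|u\|_{\tilde\theta}^2$. This is precisely what singles out the correct transition scale and what makes the resulting constant independent of $H$; I do not anticipate any deeper obstacle beyond this calculus-level coordination between the two regimes, together with the quadratic decomposition of $\|v\|_1^2$ and the triangle inequality used to control $\|v^*\|_0$.
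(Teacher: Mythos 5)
Your strategy is the right one, and since the paper itself gives no proof of this proposition (it is imported from \cite[Lem.~4.1]{KMR_19}), the comparison can only be against the expected argument: the quadratic splitting $\|v\|_1^2\sim H^{-2}\|v\|_0^2+|v|_1^2$, transporting near-optimal competitors between $k$ and $K$, the trivial bounds $k\le K$ and $K^2(u,t)\le\|u\|_0^2$, and a split of the $t$-integral at the transition scale. All of that is correct and is surely how the cited lemma is proved.

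The gap sits exactly in the step you labelled ``calculus-level coordination'' and did not carry out. With the $K$-functionals exactly as printed --- $t\|v\|_1^2$ and $t|v|_1^2$, whence your $\sigma=tH^{-2}$ --- the transition scale is $t\sim H^2$, and the three relevant integrals are $\int_{H^2}^{2H^2}t^{-2\theta-1}\,dt\sim H^{-4\theta}$ (lower bound), $H^{-2}\int_0^{H^2}t^{-2\theta}\,dt\sim H^{-4\theta}/(1-2\theta)$ (small-$t$ upper bound, finite only for $\theta<1/2$), and $\int_{H^2}^\infty t^{-2\theta-1}\,dt\sim H^{-4\theta}$ (large-$t$ upper bound). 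So your argument, pushed through, proves $\|u\|_\theta^2\sim H^{-4\theta}\|u\|_0^2+|u|_\theta^2$ rather than the stated equivalence with $H^{-2\theta}\|u\|_0^2$; indeed, in the degenerate case $X_1=X_0$ with $|\cdot|_1=0$ one computes $\|u\|_\theta^2=c_\theta H^{-4\theta}\|u\|_0^2$ while $\|u\|_{\tilde\theta}^2=H^{-2\theta}\|u\|_0^2$, which are not uniformly equivalent in $H$. The resolution is that the statement carries a normalization typo: the $K$-functionals should read $t^2\|v\|_1^2$ and $t^2|v|_1^2$, consistent with Section~\ref{sec:defs} and with how $k$ is used in the proof of Lemma~\ref{lemma:k-trace-space} (where the near-optimal $v_n$ satisfies $\|u-v_n\|_0+e^n|v_n|_1\le 2k(u,e^n)$). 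With $\sigma=t^2H^{-2}$ the transition scale becomes $t\sim H$ and your bookkeeping closes for all $\theta\in(0,1)$: $\int_H^{2H}t^{-2\theta-1}\,dt\sim H^{-2\theta}$, $H^{-2}\int_0^H t^{1-2\theta}\,dt\sim H^{-2\theta}$, and $\int_H^\infty t^{-2\theta-1}\,dt\sim H^{-2\theta}$. You should fix the normalization and write out these three integrals explicitly; as submitted, the one step you asserted without checking is the only place the claim can fail, and under your own reading of the definitions it does fail.
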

In the next result we construct a function realizing a quasi-optimal decomposition as required in the $K$-functional for the pair
$L^2(\Omega)$ and $H^1(\Omega)$, with additional local stability properties.
\begin{lemma}\label{lem:K:realization}
Let $\omega\subset\R^2$ be a bounded Lipschitz domain, $\theta\in(0,1)$, and $C>0$. For $u\in [L^2(\omega),H^1(\omega) ]_\theta$, there 
  is a function $w:(0,\infty)\rightarrow H^1(\omega)$ such that
  \begin{align}\label{lem:K:realization:eq1}
    \int_0^\infty t^{-2\theta}\left( \| u - w(t) \|_{L^2(\omega)}^2 + t^2 \| w(t) \|_{H^1(\omega)}^2 \right) \frac{dt}t \lesssim
    \| u \|_{[L^2(\omega),H^1(\omega) ]_\theta}^2.
  \end{align}
  Additionally, if $\omega'\subset\omega$ with $\dist(\omega',\partial\omega)>C$
  then
  \begin{align}\label{lem:K:realization:eq2}
    \| w(t) \|_{L^2(\omega')} \lesssim \| u \|_{L^2(\left\{ x\in\omega \mid d_{\omega'}(x)< t \right\})}.
  \end{align}
\end{lemma}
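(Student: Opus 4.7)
The plan is to take $w(t)$ as a mollification of a suitably extended version of $u$. First I would fix a Stein-type reflection extension $E:H^\theta(\omega)\to H^\theta(\R^2)$ satisfying the norm bound $\|Eu\|_{H^\theta(\R^2)}\lesssim \|u\|_{H^\theta(\omega)}\sim \|u\|_{[L^2(\omega),H^1(\omega)]_\theta}$, the last equivalence by Lemma~\ref{lem:refel:norms}. Crucially, $E$ can be chosen with a \emph{finite range of dependence}: $(Eu)(x)$ for $x\notin\overline{\omega}$ depends only on values $u(y)$ with $|x-y|\le c_0\,\dist(x,\partial\omega)$, which is standard for the local reflection construction across a Lipschitz boundary. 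Then, with a radially symmetric mollifier $\rho\in C_c^\infty(\R^2)$, $\supp\rho\subset B_1(0)$, $\int\rho=1$, and $\rho_s(x):=s^{-2}\rho(x/s)$, I would set
\begin{align*}
  w(t):=\begin{cases}(\rho_{\kappa t}*Eu)\big|_\omega,&0<t\le t_0,\\ 0,&t>t_0,\end{cases}
\end{align*}
with $t_0$ comparable to $h_\omega$ and $\kappa>0$ a fixed small constant chosen below.

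The bound~\eqref{lem:K:realization:eq1} follows from the classical mollification estimates
$\|Eu-\rho_{\kappa t}*Eu\|_{L^2(\R^2)}\lesssim t^\theta|Eu|_{H^\theta(\R^2)}$ and
$\|\nabla(\rho_{\kappa t}*Eu)\|_{L^2(\R^2)}\lesssim t^{\theta-1}|Eu|_{H^\theta(\R^2)}$,
together with the trivial $\|\rho_{\kappa t}*Eu\|_{L^2(\R^2)}\lesssim \|Eu\|_{L^2(\R^2)}$. Integrating these against $t^{-2\theta-1}$ on $(0,t_0]$ yields a multiple of $\|Eu\|_{H^\theta(\R^2)}^2\lesssim\|u\|_{[L^2(\omega),H^1(\omega)]_\theta}^2$, while the tail $(t_0,\infty)$ only contributes the harmless term $\|u\|_{L^2(\omega)}^2\int_{t_0}^\infty t^{-2\theta-1}\,dt\lesssim\|u\|_{L^2(\omega)}^2$.

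For~\eqref{lem:K:realization:eq2} the key is the compact support $\supp\rho_{\kappa t}\subset B_{\kappa t}(0)$: for $x\in\omega'$ the value $w(t)(x)$ only samples $Eu$ on $B_{\kappa t}(x)$. If $\kappa t\le C/2$ then $B_{\kappa t}(x)\subset\omega$, no extension is actually used, and choosing $\kappa\le 1$ gives $B_{\kappa t}(x)\subset\{y:d_{\omega'}(y)<t\}$, so that Young's inequality produces $\|w(t)\|_{L^2(\omega')}\lesssim\|u\|_{L^2(\{d_{\omega'}(\cdot)<t\})}$. For larger $t$ the ball may cross $\partial\omega$, but the finite range of dependence of $E$ guarantees that the values of $Eu$ on $B_{\kappa t}(x)\setminus\omega$ are determined by values of $u$ on $\omega$ within distance $(1+c_0)\kappa t$ of $x$; choosing $\kappa\le 1/(1+c_0)$ absorbs this factor, and one reduces again to $\|u\|_{L^2(\{d_{\omega'}(\cdot)<t\})}$.

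The main obstacle is exactly the tension between the two requirements on $w$: the global estimate~\eqref{lem:K:realization:eq1} asks for a good $H^\theta$-wise approximation of $u$, which forces the use of some extension of $u$ across $\partial\omega$, whereas~\eqref{lem:K:realization:eq2} demands that $w(t)$ only see values of $u$ within distance $\lesssim t$ of $\omega'$. A naive choice such as extension by zero would either destroy the $H^\theta$-bound (unless $u$ already vanishes on $\partial\omega$) or require an extra boundary analysis; the reflection-based Stein extension reconciles both demands, and the tunable prefactor $\kappa$ lets the locality radius in~\eqref{lem:K:realization:eq2} match the prescribed scale $t$ exactly.
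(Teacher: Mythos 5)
Your choice of $w(t)$ as a cutoff mollification of the Stein extension $Eu$ is exactly the paper's ansatz, but the way you justify \eqref{lem:K:realization:eq1} contains a genuine error. You invoke the pointwise-in-$t$ mollification bounds
$\|Eu-\rho_{\kappa t}*Eu\|_{L^2}\lesssim t^\theta|Eu|_{H^\theta}$ and
$\|\nabla(\rho_{\kappa t}*Eu)\|_{L^2}\lesssim t^{\theta-1}|Eu|_{H^\theta}$,
and then ``integrate against $t^{-2\theta-1}$ on $(0,t_0]$.'' But plugging the first bound in gives
$t^{-2\theta-1}\|Eu-\rho_{\kappa t}*Eu\|_{L^2}^2 \lesssim t^{-1}|Eu|_{H^\theta}^2$,
and similarly $t^{-2\theta-1}\cdot t^2\|\nabla(\rho_{\kappa t}*Eu)\|_{L^2}^2\lesssim t^{-1}|Eu|_{H^\theta}^2$; both of these diverge like $\int_0^{t_0} t^{-1}\,dt$. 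The individual pointwise estimates are correct but are not sharp at every scale simultaneously, so squaring and integrating them cannot produce the finite bound you claim. What is actually true, and what you would need here, is the genuinely stronger \emph{integrated} statement
$\int_0^\infty t^{-2\theta-1}\bigl(\|Eu-\rho_t*Eu\|_{L^2(\R^2)}^2 + t^2\|\nabla(\rho_t*Eu)\|_{L^2(\R^2)}^2\bigr)\,dt \lesssim |Eu|_{H^\theta(\R^2)}^2$,
which one obtains e.g.\ from a Plancherel/Littlewood--Paley computation exploiting the vanishing of $\widehat\rho(t\xi)-1$ as $t|\xi|\to 0$, or, as the paper does, by a $K$-functional comparison: pick for each $t$ a near-optimal $v\in H^1(\R^2)$ in $K(t,Eu)$, and show by the triangle inequality, $L^2$-contractivity of $\rho_t*$, and $\|\nabla\rho_t*f\|_{L^2}\lesssim t^{-1}\|f\|_{L^2}$ that $\|Eu-\rho_t*Eu\|_{L^2}+t\|\rho_t*Eu\|_{H^1}\lesssim \|Eu-v\|_{L^2}+t\|v\|_{H^1}\lesssim K(t,Eu)$. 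That makes the $t$-integral exactly $\|Eu\|_{[L^2,H^1]_\theta}^2$ by definition. Your argument as stated does not close this step.

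A secondary, less serious issue concerns \eqref{lem:K:realization:eq2}. The classical Stein extension operator does \emph{not} have the finite-range-of-dependence property you ascribe to it (its kernel is supported on a non-compact family of dilations). The paper avoids needing any such property: it cuts off $w(t)=0$ already for $t>C/4$, so on the domain where $w$ is nontrivial the convolution $\rho_t*Eu$ restricted to $\omega'$ only samples $Eu$ inside $\{y:d_{\omega'}(y)<t\}\subset\omega$ (since $\dist(\omega',\partial\omega)>C$), and the extension never enters the locality estimate. Your version cuts off only at $t_0\sim h_\omega$, which does not guarantee this, and then relies on the unfounded locality of $E$. If you cut off at a scale comparable to $C$ rather than $h_\omega$, your tail contribution $\int_{C/4}^\infty t^{-2\theta-1}\|u\|_{L^2}^2\,dt\lesssim C^{-2\theta}\|u\|_{L^2}^2$ is still fine and the locality argument becomes trivial.
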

\begin{proof}
  Let $\rho_t(\cdot) = t^{-2}\rho(\cdot/t)$, where $\rho$ is a mollifier, i.e., a smooth function supported in the unit ball and integrating to $1$.
  It is known that, cf.~\cite[Sec.~2.5, Lem.~10]{Burenkov_98},
  \begin{align*}
    \| \rho_t \star f \|_{H^k(S)} &\lesssim t^{\ell-k}\| f \|_{H^\ell(\left\{ x\in\R^2 \mid d_S(x)<t \right\})},
    \qquad k,\ell\in\left\{ 0,1 \right\}, k\geq\ell,\\
    \| f - \rho_t\star f \|_{L^2(S)} &\lesssim t\| f \|_{H^1(\left\{ x\in\R^2 \mid d_S(x)< t \right\})},
\end{align*}
  for any measurable open set $S\subset\R^2$ with implied constants not depending on $f$ or $S$.
  Employing Stein's extension operator $E$, we will show that the regularized function
  \begin{align*}
    w(t) := \chi_{[0,C/4]}(t)\cdot\rho_t\star Eu,
  \end{align*}
  $\chi_{[a,b]}:\R\rightarrow\R$ being the characteristic function of the interval $[a,b]$,
  fulfills
  \begin{align*}
    \int_0^\infty t^{-2\theta}\left( \| Eu - w(t) \|_{L^2(\R^2)}^2 + t^2 \| w(t) \|_{H^1(\R^2)}^2 \right) \frac{dt}t \lesssim
    \| Eu \|_{[L^2(\R^2),H^1(\R^2) ]_\theta}^2.
  \end{align*}
  The boundedness properties of the extension operator $E$ then imply \eqref{lem:K:realization:eq1}, while the local
  boundedness properties \eqref{lem:K:realization:eq2}
  follow in conjunction with the properties of the convolution with $\rho_t$ given above.
  For the rest of this proof, $K$ denotes the K-functional corresponding to the pair $L^2(\R^2)$ and $H^1(\R^2)$.
  Let $t\in (0,\infty)$ be fixed. Choose a function $v \in H^1(\R^2)$ such that
  \begin{align*}
    \| Eu - v \|_{L^2(\R^2)}^2 + t^2 \| v \|_{H^1(\R^2)}^2 \leq 2 K(t,Eu)^2.
  \end{align*}
  Note that
  \begin{align*}
    \| v - \chi_{[0,C/4]}(t)\cdot\rho_t\star v \|_{L^2(\R^2)} \lesssim
    \begin{cases}
      \| v \|_{L^2(\R^2)} \leq \frac{4}{C} t \| v \|_{H^1(\R^2)} \quad \text{ for } t>\frac{C}4\\
      t \| v \|_{H^1(\R^2)} \quad \text{ for } t\leq \frac{C}{4}.
    \end{cases}
  \end{align*}
  Then,
  \begin{align*}
    \| Eu - \chi_{[0,C/4]}(t)\cdot\rho_t\star v \|_{L^2(\R^2)} + &t \|\chi_{[0,C/4]}(t)\cdot\rho_t\star v \|_{H^1(\R^2)}\\
    &\lesssim \| Eu - v \|_{L^2(\R^2)} + \| v - \chi_{[0,C/4]}(t)\cdot\rho_t\star v \|_{L^2(\R^2)}
    + t\| v \|_{H^1(\R^2)}\\
    &\lesssim \| Eu - v \|_{L^2(\R^2)} + t \| v \|_{H^1(\R^2)}.
  \end{align*}
  Hence,
  \begin{align*}
    \| Eu - \chi_{[0,C/4]}(t)\cdot\rho_t\star Eu \|_{L^2(\R^2)} &\leq \| Eu - \chi_{[0,C/4]}(t)\cdot\rho_t\star v \|_{L^2(\R^2)} + \| \chi_{[0,C/4]}(t)\cdot\rho_t\star(v-Eu) \|_{L^2(\R^2)}\\
    &\lesssim \| Eu - \chi_{[0,C/4]}(t)\cdot\rho_t\star v \|_{L^2(\R^2)} + \| v - Eu \|_{L^2(\R^2)}\\
    &\lesssim \| Eu - v \|_{L^2(\R^2)} + t \| v \|_{H^1(\R^2)}
  \end{align*}
  as well as
  \begin{align*}
    \| \chi_{[0,C/4]}(t)\cdot\rho_t\star Eu \|_{H^1(\R^2)} &\leq \| \chi_{[0,C/4]}(t)\cdot\rho_t\star v \|_{H^1(\R^2)} + \| \chi_{[0,C/4]}(t)\cdot\rho_t\star(v-Eu) \|_{H^1(\R^2)}\\
    &\lesssim \| v \|_{H^1(\R^2)} + t^{-1} \| v-Eu \|_{L^2(\R^2)}.
  \end{align*}
  Hence,
  \begin{align*}
    \| Eu - \chi_{[0,C/4]}(t)\cdot\rho_t\star Eu \|_{L^2(\R^2)}^2 + t^2 \| \chi_{[0,C/4]}(t)\cdot\rho_t\star Eu \|_{H^1(\R^2)}^2 \lesssim K(t,Eu)^2.
  \end{align*}
  Multiplying this last estimate by $t^{-2\theta-1}$ and integrating in $t$ shows the result.
\end{proof}
The next result states certain equivalences between fractional order norms defined by interpolation
and by double integrals, on spaces including partial homogeneous boundary conditions.
\begin{lemma}\label{lem:equiv:lm:inter} 
  \begin{enumerate}
      Let $\theta\in (0,1)$.
    \item[(i)]
      Let $\what\cE\subset\left\{ \what e_4, \what e_5, \what e_6 \right\}$ be a subset of
      the edges of $\Tref$. Then, there holds
      \begin{align*}
        | v |_{H^\theta(\Tref)} + \| d_{\what\cE}^{-\theta} v \|_{L^2(\Tref)} \lesssim
        \| v \|_{[L^2(\Tref), \wilde H^1_{\what\cE}(\Tref)]_\theta}.
      \end{align*}
    \item[(ii)] There holds
      \begin{align*}
	\| v \|_{[L^2(\Sref),\wilde H^1(\Sref)]_\theta} \lesssim
	| v |_{H^\theta(\Sref)} + \| d_{\partial\Sref}^{-\theta} v \|_{L^2(\Sref)}.
      \end{align*}
  \end{enumerate}
\end{lemma}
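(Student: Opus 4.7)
\emph{Part (i).} I would prove each of the two left-hand terms independently via the interpolation theorem. For $|v|_{H^\theta(\Tref)}$, the continuous embedding $\wilde H^1_{\what\cE}(\Tref) \hookrightarrow H^1(\Tref)$ with embedding constant $1$ yields monotonicity of the $K$-functional, hence $\|v\|_{[L^2,H^1]_\theta} \leq \|v\|_{[L^2,\wilde H^1_{\what\cE}]_\theta}$; combined with Lemma~\ref{lem:refel:norms}, this bounds $|v|_{H^\theta(\Tref)}$ by the right-hand side. For $\|d_{\what\cE}^{-\theta} v\|_{L^2(\Tref)}$, the classical Hardy inequality makes the identity $\wilde H^1_{\what\cE}(\Tref) \to L^2(\Tref, d_{\what\cE}^{-2})$ bounded, while trivially $\mathrm{Id}: L^2(\Tref) \to L^2(\Tref,1)$ is bounded. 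The interpolation theorem together with Proposition~\ref{prop:L2} then identifies the interpolated target as $L^2(\Tref, d_{\what\cE}^{-2\theta})$, giving the weighted estimate. (The case $\what\cE=\emptyset$ is vacuous.)

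\emph{Part (ii).} The plan is to produce, for each $t>0$, an approximation $\tilde w(t) \in \wilde H^1(\Sref)$ near-realizing the $K$-functional, by extending $v$ by zero, regularizing, and cutting off near $\partial\Sref$. First I extend $v$ by zero to $\tilde v$ on a slightly larger Lipschitz domain $\Omega' \supsetneq \Sref$ with $\rho := \dist(\Sref, \partial\Omega') > 0$. Splitting the double integral defining $|\tilde v|_{H^\theta(\Omega')}$ according to whether both points lie in $\Sref$ or at least one lies outside, and using $\int_{\R^2 \setminus \Sref} |x-y|^{-2-2\theta}\,dx \lesssim d_{\partial\Sref}(y)^{-2\theta}$ for $y \in \Sref$, gives $\|\tilde v\|_{H^\theta(\Omega')}^2 \lesssim M^2 := |v|_{H^\theta(\Sref)}^2 + \|d_{\partial\Sref}^{-\theta} v\|_{L^2(\Sref)}^2$.

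Next, Lemma~\ref{lem:K:realization} applied to $\tilde v$ on $\Omega'$ produces a family $w(t)\in H^1(\Omega')$ realizing the $K$-functional for $(L^2(\Omega'),H^1(\Omega'))$, with local stability $\|w(t)\|_{L^2(\omega')} \lesssim \|\tilde v\|_{L^2(\{x\in\Omega': d_{\omega'}(x)<t\})}$ for subdomains $\omega'$ strictly inside $\Omega'$. I choose a cutoff $\eta_t \in C_c^\infty(\Sref)$ equal to $1$ on $\{d_{\partial\Sref}>2t\}$ and $0$ on $\{d_{\partial\Sref}<t\}$, with $\|\nabla\eta_t\|_{L^\infty} \lesssim t^{-1}$, so that $\tilde w(t) := \eta_t w(t) \in \wilde H^1(\Sref)$. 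Applying the local-stability bound with $\omega' = \{x\in\Sref: d_{\partial\Sref}(x) < 2t\}$ (valid for $t<\rho/2$, since then $\dist(\omega',\partial\Omega')\geq \rho/2$) and using $\tilde v \equiv 0$ outside $\Sref$ reduces it to $\|w(t)\|_{L^2(\omega')} \lesssim \|v\|_{L^2(\Sref \cap \{d_{\partial\Sref}<3t\})}$.

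The cutoff corrections $(1-\eta_t)w(t)$ and $(\nabla\eta_t)w(t)$ are supported in $\omega'$, so controlling them via the local stability yields $K_\Sref(t,v)^2 \lesssim \|\tilde v - w(t)\|_{L^2(\Omega')}^2 + t^2\|w(t)\|_{H^1(\Omega')}^2 + \|v\|_{L^2(\{d_{\partial\Sref}<3t\})}^2$ for $t<\rho/2$. Integration against $t^{-2\theta-1}$ controls the first two terms by $M^2$ via Lemma~\ref{lem:K:realization}, and Fubini handles the third:
\begin{align*}
 \int_0^{\rho/2} t^{-2\theta-1}\|v\|_{L^2(\{d_{\partial\Sref}<3t\})}^2\,dt
 &= \int_\Sref v(x)^2 \int_{d_{\partial\Sref}(x)/3}^{\rho/2} t^{-2\theta-1}\,dt\,dx \\
 &\lesssim \|d_{\partial\Sref}^{-\theta} v\|_{L^2}^2.
\end{align*}
For $t\geq\rho/2$ I take $\tilde w(t)=0$, so $K_\Sref(t,v)^2\leq\|v\|_{L^2(\Sref)}^2 \lesssim M^2$ and $\int_{\rho/2}^\infty t^{-2\theta-1}\,dt < \infty$. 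I expect the main subtlety to be precisely this Fubini step: a naive bound $\|v\|_{L^2(\{d_{\partial\Sref}<3t\})}^2 \leq (3t)^{2\theta}\|d_{\partial\Sref}^{-\theta}v\|_{L^2}^2$ would produce a logarithmically divergent outer integral, so one must keep the strip restriction intact and swap the order of integration, which is exactly what the local-stability clause of Lemma~\ref{lem:K:realization} is designed to enable.
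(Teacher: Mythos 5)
Your proof is correct, and both parts follow the paper's own strategy in all essentials. Part (i) is the paper's argument verbatim: monotonicity of the $K$-functional under $\wilde H^1_{\what\cE}(\Tref)\subset H^1(\Tref)$ plus Lemma~\ref{lem:refel:norms} for the seminorm, and Hardy interpolated against the identity for the weighted term (the paper does this edge by edge and then uses $d_{\what\cE}^{-1}\leq\sum_j d_{\what e_j}^{-1}$, since the cited Hardy inequality is for a single edge --- a cosmetic difference). Part (ii) has the same architecture as the paper's proof (extend, invoke Lemma~\ref{lem:K:realization}, cut off near $\partial\Sref$, Fubini for the boundary-strip term), with one genuine variation: you extend by zero, whereas the paper extends by even reflection. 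The reflection is bounded $L^2\to L^2$ and $H^1\to H^1$, so the paper gets $\|v\|_{[L^2(\omega),H^1(\omega)]_\theta}\lesssim\|v\|_{[L^2(\Sref),H^1(\Sref)]_\theta}$ for free from the interpolation theorem, but must then separately verify the local $L^2$ stability of the reflection near $\partial\Sref$ and convert to the Slobodeckij right-hand side at the end. Your zero extension makes the local stability trivial (the extension vanishes outside $\Sref$), at the price of the kernel estimate $\int_{\R^2\setminus\Sref}|x-y|^{-2-2\theta}\,dx\lesssim d_{\partial\Sref}(y)^{-2\theta}$ needed to show $\tilde v\in H^\theta(\Omega')$ --- which is correct, and has the pleasant feature of making visible from the outset why the weight $d_{\partial\Sref}^{-\theta}$ must appear. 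Your direct Fubini on $\{d_{\partial\Sref}<3t\}$ is also fine; the paper's decomposition into four rectangles is just an explicit way of carrying out the same swap of integration order, and your closing remark correctly identifies the trap that the strip restriction must be kept inside the integral.
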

\begin{proof}
\begin{figure}[htb]
  \centering
  \psfrag{S}{$\Sref$}
  \psfrag{w}{$\omega$}
  \includegraphics[width=0.8\textwidth]{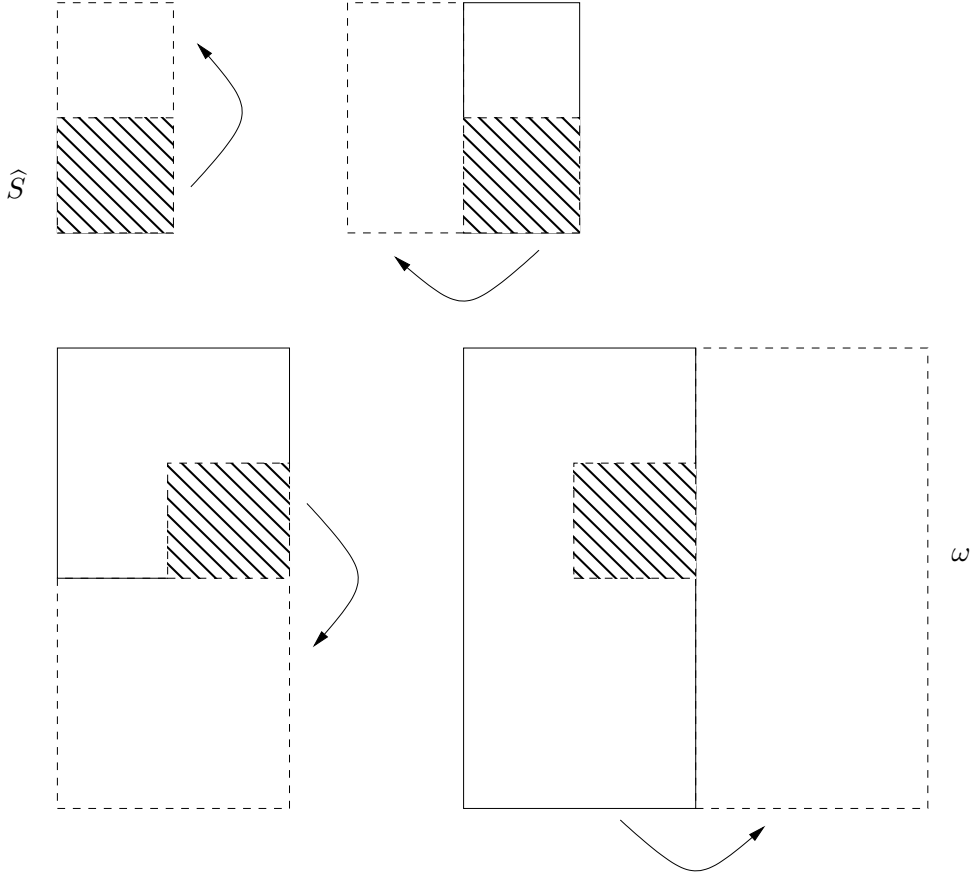}
  \caption{Extension procedure in Lemma~\ref{lem:equiv:lm:inter} for the function $v$ from $\Sref$ to $\omega$.}
  \label{fig:ext}
\end{figure}
  First, note that
  \begin{align*}
    | v |_{H^\theta(\Tref)} &\leq \| v \|_{H^\theta(\Tref)}
    \lesssim \| v \|_{[L^2(\Tref),H^1(\Tref)]_\theta}
    \leq \| v \|_{[L^2(\Tref),\wilde H^1_{\what\cE}(\Tref)]_\theta}.
  \end{align*}
  Here, the first estimate follows by definition, the second from Lemma~\ref{lem:refel:norms}, and
  the last one from $\wilde H^1_{\what\cE}(\Tref)\subset H^1(\Tref)$.
  Next, let $\what e_j\in\what\cE$. For $v|_{\what e_j}=0$, a simple argument based on Hardy's inequality,
  cf.~\cite[Lem.~4.4 (iii)]{KMR_19}, shows that
  \begin{align*}
    \| d_{\what e_j}^{-1} v \|_{L^2(\Tref)} \lesssim \| v \|_{H^1(\Tref)}.
  \end{align*}
  Interpolating this estimate with $\| v \|_{L^2(\Tref)} \leq \| v \|_{L^2(\Tref)}$ shows that
  \begin{align*}
    \| d_{\what e_j}^{-\theta} v \|_{L^2(\Tref)} \lesssim
    \| v \|_{[L^2(\Tref), \wilde H^1_{\what\cE}(\Tref)]_\theta},
  \end{align*}
  and the obvious estimate $d_{\what\cE}^{-1} \leq \sum_{\what e_j\in\what\cE}d_{\what e_j}^{-1}$
  concludes the statement $(i)$.
  To show $(ii)$, we extend the function $v$ given on $\Sref$
  to a function $v$ on a rectangle $\omega$ such that the boundaries of $\Sref$ and $\omega$ have positive distance
  $\dist(\Sref,\partial\omega)>0$.
  This extension is done by mirroring symmetrically along certain edges, cf. Figure~\ref{fig:ext}.
  In particular, as $\| v \|_{L^2(\omega)} \lesssim \| v \|_{L^2(\Sref)}$ and $\| v \|_{H^1(\omega)} \lesssim \| v \|_{H^1(\Sref)}$,
  the interpolation theorem gives
  \begin{align*}
    \| v \|_{[L^2(\omega),H^1(\omega)]_\theta} \lesssim \| v \|_{[L^2(\Sref),H^1(\Sref)]_\theta}.
  \end{align*}
  Next, we apply Lemma~\ref{lem:K:realization} with $C=\dist(\Sref,\partial\omega)$ to obtain a function $w$ such that
  \begin{align*}
    \int_0^\infty t^{-2\theta}
    \left( \| v - w(t) \|_{L^2(\omega)}^2 + t^2 \| w(t) \|_{H^1(\omega)}^2 \right)\frac{dt}{t}
    \lesssim \| v \|_{[L^2(\omega),H^1(\omega)]_\theta}^2.
  \end{align*}
  Note that the preceding two estimates imply
  \begin{align}\label{lem:equiv:lm:inter:eq2}
    \int_0^\infty t^{-2\theta}
    \left( \| v - w(t) \|_{L^2(\Sref)}^2 + t^2 \| w(t) \|_{H^1(\Sref)}^2 \right)\frac{dt}{t}
    \lesssim \| v \|_{[L^2(\Sref),H^1(\Sref)]_\theta}^2.
  \end{align}
  We denote $S_t = \left\{ x\in\Sref| d_{\partial\Sref}(x)<t \right\}$ and see
  \begin{align*}
    \| w(t) \|_{L^2(S_t)} \lesssim \| v \|_{L^2(\left\{ x\in \omega\mid d_{S_t}(x)< t \right\})} \lesssim \| v \|_{L^2(S_{2t})},
  \end{align*}
  where the first estimate follows from Lemma~\ref{lem:K:realization}, and the last
  estimate follows from the local stability of the specific extension of $v$, cf. Figure~\ref{fig:ext}.
  Choose a smooth cut-off function $\chi_t$ with $\| \chi_t \|_{L^\infty}\leq 1$, $\Sref\cap\supp\chi_t\subset S_t$, $\chi_t|_{S_{t/2}}=1$,
  and $\| \nabla \chi_t \|_{L^\infty(\Sref)} \lesssim t^{-1}$.
  Define $\wilde w(t) := (1-\chi_t)w(t)$, note $\wilde w(t)\in \wilde H^1(\Sref)$ as well as
  \begin{align}\label{lem:equiv:lm:inter:eq3}
    \begin{split}
    \| v - \wilde w(t) \|_{L^2(\Sref)} &\leq \| v - w(t) \|_{L^2(\Sref)} + \| w(t) \|_{L^2(S_t)}
    \leq \| v - w(t) \|_{L^2(\Sref)} + \| v \|_{L^2(S_{2t})},\\
    t \| \wilde w(t) \|_{H^1(\Sref)} &\lesssim t \| w(t) \|_{H^1(\Sref)} + \| w(t) \|_{L^2(S_t)}
    \leq t \| w(t) \|_{H^1(\Sref)} + \| v \|_{L^2(S_{2t})}.
    \end{split}
  \end{align}
  The definition of the interpolation norm, the preceding estimates, and~\eqref{lem:equiv:lm:inter:eq2} show
  \begin{align*}
    \| v \|_{[L^2(\Sref),\wilde H^1(\Sref)]_\theta}^2 &\leq
    \int_0^\infty t^{-2\theta}
    \left( \| v - \wilde w(t) \|_{L^2(\Sref)}^2 + t^2 \| \wilde w(t) \|_{H^1(\Sref)}^2 \right)\frac{dt}{t}\\
    &\leq
    \int_0^\infty t^{-2\theta}
    \left( \| v - w(t) \|_{L^2(\Sref)}^2 + t^2 \| w(t) \|_{H^1(\Sref)}^2 \right)\frac{dt}{t}
    + \int_0^\infty t^{-2\theta} \| v \|_{L^2(S_{2t})}^2 \frac{dt}t\\
  &\hspace{-.15cm}\stackrel{\eqref{lem:equiv:lm:inter:eq2}}\lesssim
    \| v \|_{[L^2(\Sref),H^1(\Sref)]_\theta}^2
    + \int_0^\infty t^{-2\theta} \| v \|_{L^2(S_{2t})}^2 \frac{dt}t,
  \end{align*}
  Due to the $L^2$ and $H^1$ stability of the extension of $v$, Lemma~\ref{lem:refel:norms},
  a simple substitution $\tau = 2t$, and the trivial bound $\| v \|_{L^2(\Sref)} \lesssim \| d_{\Sref}^{-\theta}v \|_{L^2(\Sref)}$,
  we conclude for any fixed $\delta>0$
  \begin{align*}
    \| v \|_{[L^2(\Sref),\wilde H^1(\Sref)]_\theta}^2
    &\lesssim | v |_{H^\theta(\Sref)}^2 + \| d_{\partial\Sref}^{-\theta} v \|_{L^2(\Sref)}^2
    + \int_0^\delta \tau^{-2\theta} \| v \|_{L^2(S_{\tau})}^2 \frac{d\tau}\tau.
  \end{align*}
  It remains to bound the last integral on the right-hand side by $\| d_{\partial\Sref}^{-\theta} v \|_{L^2(\Sref)}$.
  To that end, choose $\delta>0$ small enough so that $S_\delta \neq \Sref$ and consider for $\tau\leq\delta$ the
  (overlapping) decomposition $S_\tau = S_\tau^{(1)}\cup S_\tau^{(2)}\cup S_\tau^{(3)}\cup S_\tau^{(4)}$ given by
  \begin{align*}
    S_\tau^{(1)} &= (-1,-1+\tau)\times (-1/\sqrt{3},2/\sqrt{3}), &S_\tau^{(2)} = (1-\tau,1)\times (-1/\sqrt{3},2/\sqrt{3})\\
    S_\tau^{(3)} &= (-1,1) \times (-1/\sqrt{3},-1/\sqrt{3}+\tau), &S_\tau^{(4)} = (-1,1) \times (2/\sqrt{3}-\tau,2/\sqrt{3})
  \end{align*}
  and note that
  \begin{align*}
    \int_0^\delta \tau^{-2\theta} \| v \|_{L^2(S_{\tau})}^2 \frac{d\tau}\tau \leq\sum_{j=1}^4 \int_0^\delta \tau^{-2\theta} \| v \|_{L^2(S_{\tau}^{(j)})}^2 \frac{d\tau}\tau.
  \end{align*}
  We will bound only the right-hand side term for $j=1$, the three remaining terms can be bounded similarly. Using Fubini, we see
  \begin{align*}
    \int_0^\delta \tau^{-2\theta} \| v \|_{L^2(S_{\tau}^{(1)})}^2 \frac{d\tau}\tau
    &= \int_0^\delta \tau^{-2\theta-1} \int_{0}^{\tau} \int_{-1/\sqrt{3}}^{2/\sqrt{3}} |v(-1+x,y)|^2\,dy\,dx\,d\tau\\
    &= \int_{0}^\delta \int_{x}^\delta \tau^{-2\theta-1} \int_{-1/\sqrt{3}}^{2/\sqrt{3}} |v(-1+x,y)|^2\,dy\,d\tau\,dx\\
    &\lesssim \int_{0}^\delta x^{-2\theta} \int_{-1/\sqrt{3}}^{2/\sqrt{3}} |v(-1+x,y)|^2\,dy\,dx.
  \end{align*}
  We conclude the proof by noting that for $(x,y)\in S_\delta$ there holds $x^{-\theta} \leq \dist_{\partial\Sref}^{-\theta}(x,y)$.
\end{proof}
\subsection{Decomposition of FEM spaces} 
The following result summarizes the main results of~\cite{KMR_19}.
\begin{lemma}\label{lem:kmr:decomp:2}
  Let $\cT$ be a mesh of $\Omega$ that fulfills Assumption~\ref{assumption:refmaps}
  and $\bp = \left( p_K \right)_{K\in\cT}$ be a degree distribution on $\cT$ that
  fulfills Assumption~\ref{assumption1}.
  For all vertices $V\in\cV^{\rm int}$ and edges $e\in\cE^{\rm int}$ there
  exist
  \begin{enumerate}
    \item[(i)]\label{lem:kmr:decomp:2:iii}
      polynomial degrees $q_V$ and $q_e$ and associated spaces
      \begin{align*}
	X_{\omega_V} &:= \{ \wilde u \in \cP^{q_V}(\Tref) \mid \wilde u|_{\widehat e_6}=0, \wilde u \text{ symmetric w.r.t.
	    the line from } \widehat\bv_3 \text{ to the origin} \},\\
	X_{\omega_e} &:= \{ \wilde u \in \cP^{q_e}(\Tref) \mid \wilde u|_{\what e_5\cup\what e_6}=0\};
      \end{align*}
    \item[(ii)]\label{lem:kmr:decomp:2:i}
      push-forward operators $T_{\omega}: X_\omega \rightarrow \wilde\cS^{\bp,1}(\cT|_{\omega})$,
      $\omega\in\left\{ \omega_V \mid V\in\cV^{\rm int} \right\}\cup\left\{ \omega_e \mid e \in \cE^{\rm int} \right\}$,
      such that for any $\varepsilon>0$ sufficiently small there hold the mapping properties
      \begin{align*}
	h_{\omega}^{-1} \| T_\omega\wilde u \|_{L^2(\omega)} &\lesssim
	\| \wilde u \|_{L_2(\Tref)} + \| \wilde u \|_{L^{\infty}(\Tref_\varepsilon)},\\
	\| \nabla T_\omega\wilde u \|_{L^2(\omega)} &\lesssim 
	\| \nabla \wilde u \|_{L_2(\Tref)} + \| \nabla \wilde u \|_{L^{\infty}(\Tref_\varepsilon)},
      \end{align*}
      for all polynomials $\wilde u\in X_\omega$, with constants depending only on $\varepsilon$, and
  \end{enumerate}
  such that every function $u\in\wilde\cS^{\bp,1}(\cT)$ can be written as
  \begin{align}\label{lem:kmr:decomp:2:ii}
    u = u_1 + \sum_{V\in\cV^{\rm int}} T_{\omega_V}(\wilde u_V) + \sum_{e\in\cE^{\rm int}}T_{\omega_e}(\wilde u_e) + \sum_{K\in\cT} u_K,
  \end{align}
  with functions $u_1\in\wilde\cS^{1,1}(\cT)$, $\wilde u_V\in X_{\omega_V}$,
  $\wilde u_e\in X_{\omega_e}$, and $u_K\in \cP^{p_K}\cap\wilde H^1(K)$, all of them depending linearly on $u$, and such that
  for $\theta\in(0,1)$ and $\delta>0$ sufficiently small there holds
  \begin{align*}
    \| u_1 \|_{[L^2(\Omega),\wilde H^1(\Omega)]_\theta}^2 &\lesssim \| u \|_{[L^2(\Omega),\wilde H^1(\Omega)]_\theta}^2,\\
    \sum_{V\in\cV^{\rm int}} h_{\omega_V}^{2-2\theta} \left( |\wilde u_V|_{H^\theta(\Tref)}^2
    + \| d_{\widehat e_6}^{-\theta} \wilde u_V \|_{L^2(\Tref)}^2
    + \| \wilde u_V \|_{W^{1,\infty}(\Tref_\delta)}^2 \right)
    &\lesssim \| u \|_{[L^2(\Omega),\wilde H^1(\Omega)]_\theta}^2,\\
    \sum_{e\in\cE^{\rm int}} h_{\omega_e}^{2-2\theta} \left( |\wilde u_e|_{H^\theta(\Tref)}^2
    + \| d_{\widehat e_5\cup \widehat e_6}^{-\theta} \wilde u_e \|_{L^2(\Tref)}^2
    + \| \wilde u_e \|_{W^{1,\infty}(\Tref_\delta)}^2 \right)
    &\lesssim \| u \|_{[L^2(\Omega),\wilde H^1(\Omega)]_\theta}^2,\\
    \sum_{K\in\cT} | u_K |_{H^\theta(K)}^2 + \| d_{\partial K}^{-\theta} u_K \|_{L^2(K)}^2
    &\lesssim \| u \|_{[L^2(\Omega),\wilde H^1(\Omega)]_\theta}^2.
  \end{align*}
  The implied constants depend only on $\theta$ and $\delta$.
  The result remains true if we use $\cS^{\bp,1}(\cT)$ instead of $\wilde\cS^{\bp,1}(\cT)$
  and $H^1(\Omega)$ instead of $\wilde H^1(\Omega)$. In this case, all vertices $V\in\cV$ and edges $e\in\cE$
  have to be taken into account.
\end{lemma}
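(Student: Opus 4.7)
The plan is to follow the inductive construction of~\cite{KMR_19} and reassemble its ingredients into the statement above, peeling off contributions in order of decreasing geometric complexity: first a low-order conforming piece, then vertex contributions, then edge contributions, and finally element-interior bubbles. The fractional-order bounds are not proved directly but obtained from simultaneous $L^2$ and $H^1$ bounds by interpolation, combined with the tools already collected in Section~\ref{sec:defs} and in Proposition~\ref{lemma:K-vs-k} and Lemma~\ref{lem:equiv:lm:inter}.

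First I would set $u_1 := I_1 u$, where $I_1$ is a Scott--Zhang-type quasi-interpolation into $\wilde\cS^{1,1}(\cT)$. Standard arguments give locally, hence globally, $\| u_1 \|_{L^2(\Omega)} \lesssim \| u \|_{L^2(\Omega)}$ and $\| u_1 \|_{H^1(\Omega)} \lesssim \| u \|_{H^1(\Omega)}$, with constants independent of $\bp$. Interpolating these two inequalities in the sense of the K-method produces the required $[L^2(\Omega),\wilde H^1(\Omega)]_\theta$ bound for $u_1$, and thus also for $v := u - u_1$.

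Next, since $v$ vanishes at all vertices of $\cT$, for each interior vertex $V$ I would extract a vertex-patch piece $T_{\omega_V}(\wilde u_V)$ by restricting $v$ to the star of edges at $V$ and extending it to $\omega_V$ via an edge-to-element polynomial lifting; pulling back through $F_K$ on each element and, where necessary, composing with the Duffy transform of Lemma~\ref{lem:2dduffy} to reconcile triangles and quadrilaterals, produces a symmetric $\wilde u_V\in\cP^{q_V}(\Tref)$ with $\wilde u_V|_{\widehat e_6}=0$, exactly the space $X_{\omega_V}$. After subtracting $\sum_V T_{\omega_V}(\wilde u_V)$ the residual vanishes in a neighborhood of every vertex; an analogous edge-patch construction then yields $T_{\omega_e}(\wilde u_e)$ with $\wilde u_e\in X_{\omega_e}$, and what is left has vanishing trace on every $\partial K$, giving the element bubbles $u_K$. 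Each of the operators $T_{\omega_V}$ and $T_{\omega_e}$ is, by a scaling argument on the patch together with Lemma~\ref{lem:2dduffy}, stable in $L^2$ and in $H^1$ with the stated $L^\infty$ corrections near the apex $\widehat\bv_1$.

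The final step is to upgrade the $L^2$ and $H^1$ stability of each component into the weighted fractional bounds on the right-hand side. For the vertex and edge terms, the norms on $\Tref$ contain both an $|\cdot|_{H^\theta}$ part and a Hardy-type weight $d_{\widehat e}^{-\theta}$; these are controlled by the interpolation norm through part (i) of Lemma~\ref{lem:equiv:lm:inter}, while the $W^{1,\infty}(\Tref_\delta)$ contributions come from the $L^\infty$-type terms in the stability of $T_\omega$ and are absorbed via $p$-robust polynomial inverse estimates on a fixed neighborhood of the apex. Proposition~\ref{lemma:K-vs-k} is what allows us to pass freely between full norms and seminorms with the correct $h_\omega^{2-2\theta}$ scaling. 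The main obstacle I expect is precisely this last bookkeeping: ensuring that when the patch contributions are summed, the $L^2$ and $H^1$ halves of the K-decomposition on $\Omega$ can be chosen \emph{consistently} across overlapping patches so that the resulting sum of local K-integrals is bounded by the global K-integral of $u$ — this is the point at which the construction of $I_1$ and of the $T_\omega$ must be coordinated, and is the technical heart of~\cite{KMR_19}.
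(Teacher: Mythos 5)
Your outline matches the architecture of the paper's proof (low-order Scott--Zhang part, then vertex-, edge-, and element-contributions pushed forward via the element maps and the Duffy transform, with Lemma~\ref{lem:equiv:lm:inter}~(i) converting interpolation norms into the weighted $\| d_{\what\cE}^{-\theta}\cdot\|_{L^2}$ form), and like the paper you ultimately lean on~\cite{KMR_19} for the localization of the global interpolation norm over overlapping patches --- you correctly flag that as the technical heart. However, two of the steps you propose to carry out yourself would fail as stated. First, the $W^{1,\infty}(\Tref_\delta)$ terms cannot be ``absorbed via $p$-robust polynomial inverse estimates on a fixed neighborhood of the apex'': there is no $p$-uniform inverse estimate from $L^2(\Tref_\delta)$ into $W^{1,\infty}(\Tref_\delta)$ (such estimates cost powers of $p$), and $W^{1,\infty}$ is not reachable by interpolating an $L^2$ bound against an $H^1$ bound. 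In the paper these bounds are inherited directly from the structure of the decomposition in \cite[Thm.~2.6, Lem.~6.1, Lem.~6.3]{KMR_19}, not derived from $L^2$/$H^1$ stability.

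Second, your description of $T_{\omega_e}$ on quadrilateral elements is incomplete in a way that breaks conformity. Composing $\wilde u_e\in\cP^{q_e}(\Tref)$ with the Duffy transform yields an element of $\cQ^{q_e}$ on $\Sref$, which in general does \emph{not} lie in the local space $\cQ^{p_S}$ of the neighbouring quadrilateral; the paper's operator therefore inserts the Gau{\ss}--Lobatto interpolant $i_{\lfloor q_e/2\rfloor}$ on quadrilateral elements, and it is exactly here that Assumption~\ref{assumption1} ($p_T\leq p_S$ or $2p_S\leq p_T$) and the $p$-uniform stability of $i_p$ from Proposition~\ref{prop:GL}~(\ref{item:prop:GL-ii}) are needed. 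Your proposal never invokes Assumption~\ref{assumption1}, which is a sign this step is missing. A smaller but related inaccuracy: $u-I_1u$ for a Scott--Zhang quasi-interpolant does not vanish at the vertices of $\cT$, so the subsequent peeling-off of vertex contributions cannot start from that premise; the actual extraction of $\wilde u_V$, $\wilde u_e$, $u_K$ with simultaneous control of all the stated norms is precisely what \cite[Thm.~2.6, Cor.~6.2, Cor.~6.4]{KMR_19} provides and should be cited rather than re-derived by nodal-interpolation reasoning.
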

\begin{proof}
  Depending on the type of the underlying patch, the operators $T_{\omega}$ are defined as either
  \begin{align*}
    (T_{\omega_V} \wilde u)|_{K'} :=
    \begin{cases}
      \wilde u\circ F_{K'}^{-1} & \text{ if } \what{K'}=\Tref,\\
      (\cD \wilde u)\circ F_{K'}^{-1} &\text{ if } \what{K'}=\Sref,
    \end{cases}
  \end{align*}
  or
  \begin{align*}
    (T_{\omega_e} \wilde u)|_{K'} :=
    \begin{cases}
      \wilde u\circ F_{K'}^{-1} & \text{ if } \what{K'}=\Tref,\\
      (i_{\lfloor q_e/2\rfloor}\cD \wilde u)\circ F_{K'}^{-1} &\text{ if } \what{K'}=\Sref,
    \end{cases}
  \end{align*}
  and taking into account certain issues of orientation of the element maps $F_K$. Here,
  $i_p: C(\overline{\Sref}) \rightarrow {\mathcal Q}_p$ denotes the Gauss-Lobatto interpolation operator. These operators fulfill
  $(ii)$, which follows by scaling arguments, Lemma~\ref{lem:2dduffy}, and a possible application
  of Proposition~\ref{prop:GL}, $(ii)$.
  Next,~\cite[Thm.~2.6]{KMR_19} states that
  \begin{align*}
    u = u_1 + \sum_{V\in\cV^{\rm int}}  u_V
    + \sum_{e\in\cE^{\rm int}}u_e + \sum_{K\in\cT} u_K,
  \end{align*}
  where $u_1=u-I_h u$ with $I_h$ being the Scott-Zhang projection operator.
  The proofs of~\cite[Thm.~2.6, Cor.~6.2, Lem.~6.1]{KMR_19} reveal that
  $u_V = T_{\omega_V}(\wilde u_V)$ for some function $\wilde u_V\in X_V$, and that
  \begin{align*}
    \sum_{V\in\cV^{\rm int}} 
    h_{\omega_V}^{2-2\theta} \| \wilde u_V \|_{[L^2(\Tref),\wilde H^1_{\what e_6}(\Tref)]_\theta}^2
    + \| \wilde u_V \|_{W^{1,\infty}(\Tref_\delta)}^2
    \lesssim \| u \|_{[L^2(\Omega),\wilde H^1(\Omega)]_\theta}^2.
  \end{align*}
  The proofs of~\cite[Thm.~2.6, Cor.~6.4, Lem.~6.3]{KMR_19}
  reveal that $u_e = T_{\omega_e}(\wilde u_e)$ for some function $\wilde u_e\in X_e$, and that
  \begin{align*}
    \sum_{e\in\cE^{\rm int}} 
    h_{\omega_e}^{2-2\theta} \| \wilde u_e \|_{[L^2(\Tref),\wilde H^1_{\what e_5\cup \what e_6}(\Tref)]_\theta}^2
    + \| \wilde u_e \|_{W^{1,\infty}(\Tref_\delta)}^2
    \lesssim \| u \|_{[L^2(\Omega),\wilde H^1(\Omega)]_\theta}^2.
  \end{align*}
  The proof of~\cite[Thm.~2.6]{KMR_19} reveals
  \begin{align*}
    \sum_{K\in\cT} | u_K |_{H^\theta(K)}^2 + \| d_{\partial K}^{-\theta} u_K \|_{L^2(K)}^2
    &\lesssim \| u \|_{[L^2(\Omega),\wilde H^1(\Omega)]_\theta}^2.
  \end{align*}
  Finally, Lemma~\ref{lem:equiv:lm:inter}, $(i)$, shows the stipulated estimates.
\end{proof}
\section{Proofs of the main results}
\subsection{Interpolation spaces as trace spaces} 

We recall that \cite[Lemma~{40.1}]{Tartar} states the following: 
$u \in [X_0,X_1]_{\theta}$ if and only if there exists a function $v:(0,\infty) \rightarrow X_1$ 
with $t^{1-\theta} \|v\|_{1} \in L^2(\R_+,\frac{dt}{t})$  whose derivative $v^\prime$ exists and satisfies 
$t^{1-\theta} \|v^\prime\|_{0} \in L^2(\R_+,\frac{dt}{t})$ and $\lim_{t \rightarrow 0} v(t) = v(0) = u$ (in $X_0$). 
The following lemma shows that a similar characterization can be achieved for the $|\cdot|_\theta$-seminorm.
\begin{lemma}\label{lemma:k-trace-space}
  Under the hypotheses of Proposition~\ref{lemma:K-vs-k} there holds the following for $\theta\in(0,1)$:
  \begin{enumerate}[(i)]
    \item\label{item:lemma:k-trace-space-i}
      Let $u \in [X_0,X_1]_{\theta}$. Then, there exists a function $v:(0,\infty) \rightarrow X_1$
      with $t^{1-\theta} |v|_{1} \in L^2(\R_+,\frac{dt}{t})$ whose derivative $v^\prime$ satisfies 
      $t^{1-\theta} \|v^\prime\|_{0} \in L^2(\R_+,\frac{dt}{t})$ and $\lim_{t\rightarrow 0} v(t) = u$ (convergence in $X_0$). 
      Moreover, for a $C>0$ that depends solely on $\theta$
      \begin{align*}
	\int_{t=0}^\infty t^{2(1-\theta)} |v(t)|^2_1 \frac{dt}{t} + 
	\int_{t=0}^\infty t^{2(1-\theta)} \|v^\prime(t)\|^2_0 \frac{dt}{t} 
	\leq C |u|^2_\theta.
      \end{align*}
    \item\label{item:lemma:k-trace-space-ii}
      Let $v:(0,\infty) \rightarrow X_1$ be such that $t^{1-\theta} |v|_{1} \in L^2(\R_+,\frac{dt}{t})$ and 
      $t^{1-\theta} \|v^\prime\|_{0} \in L^2(\R_+,\frac{dt}{t})$. Then $\lim_{t \rightarrow 0} v(t)$ exists (in $X_0$) 
      and $v(0) \in [X_0,X_1]_{\theta}$. Moreover, for a $C>0$ depending solely on $\theta$
      \begin{align*}
	|v(0)|^2_\theta  \leq 
	\int_{t=0}^\infty t^{2(1-\theta)} |v(t)|^2_1 \frac{dt}{t} + 
	\int_{t=0}^\infty t^{2(1-\theta)} \|v^\prime(t)\|^2_0 \frac{dt}{t}.  
      \end{align*}
  \end{enumerate}
\end{lemma}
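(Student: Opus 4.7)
The target lemma is the $k$-functional (seminorm) analog of Tartar's Lemma~40.1 quoted just above it, so the plan is to mimic the classical trace-space argument while carefully replacing $\|\cdot\|_1$ by the seminorm $|\cdot|_1$ and the full interpolation norm $\|\cdot\|_\theta$ by $|\cdot|_\theta$ throughout. I would prove (ii) first, then (i).

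For part (ii), assume the two integrability hypotheses on $v$. Existence of $v(0):=\lim_{t\to 0^+} v(t)$ in $X_0$ follows from a Cauchy argument: representing $v(t_2)-v(t_1)=\int_{t_1}^{t_2} v'(s)\,ds$ as a Bochner integral in $X_0$ and splitting $\|v'(s)\|_0 = s^{(2\theta-1)/2}\cdot s^{(1-2\theta)/2}\|v'(s)\|_0$, Cauchy--Schwarz yields
\begin{align*}
  \|v(t_2)-v(t_1)\|_0 \le \left(\int_{t_1}^{t_2} s^{2\theta-1}\,ds\right)^{1/2}\left(\int_{t_1}^{t_2} s^{1-2\theta}\|v'(s)\|_0^2\,ds\right)^{1/2},
\end{align*}
and the first factor tends to $0$ as $t_1,t_2\to 0^+$ because $\theta>0$ makes $s^{2\theta-1}$ integrable at the origin, while the second factor is globally bounded by hypothesis. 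Having secured the limit, for each $t>0$ the element $v(t)\in X_1$ is an admissible competitor in $k(v(0),t)^2$, and $v(0)-v(t)=-\int_0^t v'(s)\,ds$ yields $k(v(0),t)^2 \le \bigl(\int_0^t\|v'(s)\|_0\,ds\bigr)^2 + t^2|v(t)|_1^2$. Multiplying by $t^{-2\theta-1}$ and integrating, the second piece contributes exactly $\int_0^\infty t^{2(1-\theta)}|v(t)|_1^2\,\tfrac{dt}{t}$, while the first is handled by Hardy's inequality $\int_0^\infty t^{1-2\theta}\bigl[\tfrac1t\int_0^t g\,ds\bigr]^2\,dt \lesssim \int_0^\infty t^{1-2\theta}g(t)^2\,dt$, valid precisely because $1-2\theta<1$, i.e.\ $\theta>0$.

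For part (i), I would construct $v$ directly by dyadic quasi-minimization of the $k$-functional. Setting $t_k:=2^k$ for $k\in\Z$, choose $v_k\in X_1$ with $\|u-v_k\|_0^2+t_k^2|v_k|_1^2 \le 2k(u,t_k)^2$, which in particular gives $|v_k|_1\lesssim k(u,t_k)/t_k$ and $\|u-v_k\|_0\lesssim k(u,t_k)$. Define $v$ on $[t_k,t_{k+1}]$ as the linear interpolant between $v_k$ and $v_{k+1}$; then $v$ is continuous into $X_1$, $v'(t)=(v_{k+1}-v_k)/(t_{k+1}-t_k)$ is piecewise constant in $X_0$, and on each dyadic interval one obtains $|v(t)|_1\lesssim k(u,t_k)/t_k+k(u,t_{k+1})/t_{k+1}$ (subadditivity of $|\cdot|_1$) and $\|v'(t)\|_0\lesssim (k(u,t_k)+k(u,t_{k+1}))/t_k$ (triangle inequality through $u$, which uses only the $X_0$-errors $\|u-v_k\|_0$). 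Using $t\sim t_k$ on $[t_k,t_{k+1}]$, elementary integration gives
\begin{align*}
  \int_{t_k}^{t_{k+1}} t^{2(1-\theta)}\bigl(|v(t)|_1^2+\|v'(t)\|_0^2\bigr)\tfrac{dt}{t} \lesssim t_k^{-2\theta}\bigl(k(u,t_k)^2+k(u,t_{k+1})^2\bigr),
\end{align*}
and summing over $k\in\Z$ together with the discrete-to-continuous equivalence $\sum_k t_k^{-2\theta}k(u,t_k)^2 \sim |u|_\theta^2$ (which rests on monotonicity of $k(u,\cdot)^2$ and is a Riemann-sum comparison) yields the claimed bound. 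Convergence $v(t)\to u$ in $X_0$ follows from $\|u-v(t)\|_0\lesssim k(u,t_k)+k(u,t_{k+1})$ combined with $k(u,t)\to 0$ as $t\to 0^+$, which is itself a consequence of $|u|_\theta<\infty$ and monotonicity of $k$.

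The main obstacle is the careful seminorm bookkeeping. In both directions, the standard proofs for the norm version use the full $X_1$-norm $\|v\|_1$; here one must verify that everything needed on the $X_1$-side is in fact $|\cdot|_1$, so as not to contaminate the conclusion with an $\|u\|_0$-term that would turn a bound on $|u|_\theta$ into one on $\|u\|_\theta$. The key observation enabling this is that differences $v_{k+1}-v_k$ are estimated in $X_0$ through $u$ (i.e., using $\|u-v_k\|_0$), so only the seminorm $|v_k|_1$ ever enters the $X_1$-estimate; verifying this at each step is the main technical care needed.
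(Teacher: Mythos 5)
Your proposal is correct and follows essentially the same route as the paper: part (i) via quasi-minimizers of $k(u,\cdot)$ on a geometric grid (you use $t_k=2^k$, the paper uses $e^k$) joined by piecewise linear interpolation, with $\|v'\|_0$ controlled by the triangle inequality through $u$ so that only the seminorm $|v_k|_1$ ever appears; part (ii) via a Cauchy--Schwarz/Cauchy argument for the existence of $v(0)$ followed by Hardy's inequality. The seminorm bookkeeping you flag as the main technical point is exactly what the paper's proof carries out.
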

\begin{proof}
  We modify the arguments presented in~\cite[Lem.~40.1]{Tartar}.\\

  \textbf{Proof of~(\ref{item:lemma:k-trace-space-i}).} Due to Proposition~\ref{lemma:K-vs-k} we know that $|u|_\theta<\infty$.
  For every $n \in \Z$ pick $v_n \in X_1$ such that 
  \begin{equation}
  \label{eq:lemma:k-trace-space-10}
  \|u - v_n\|_{0} + e^n |v_n|_{1} \leq 2 k(u,e^n) 
  \end{equation}
  and define on $(0,\infty)$ the function $v$ as the piecewise linear interpolant with values 
  $v_n$ at the knots $t_n = e^n$. Note that for $n\rightarrow -\infty$ there holds $k(u,e^n)\rightarrow 0$, and hence also
  $v_n \rightarrow u$ in $X_0$. We conclude that $\lim_{t\rightarrow 0}v(t)=u$ in $X_0$.
  Next, in view of 
  \begin{align}\label{eq:lemma:k-trace-space-15}
    k(u,\lambda t) \leq \max\{1,\lambda \} k(u,t),\qquad   \lambda > 0,
  \end{align}
  we have for $t \in (e^n,e^{n+1})$
  \begin{align}\label{eq:lemma:k-trace-space-20}
    |v(t)|_1 \leq \max\{|v_n|_1,|v_{n+1}|_1\} \leq 2 \max\{ e^{-n} k(u,e^n), e^{-(n+1)} k(u,e^{n+1})\}
    \leq 2 e^{-n}  k(u,e^n). 
  \end{align}
  Therefore, by exploiting (\ref{eq:lemma:k-trace-space-15}) we get 
  \begin{align}
    \int_{t=0}^\infty t^{2(1-\theta)} |v(t)|_{1}^2 \frac{dt}{t} \lesssim  \sum_{n \in \Z} e^{2(1-\theta) n} e^{-2n} k^2(u,e^n) 
    \sim \int_{t=0}^\infty t^{-2\theta} k^2(u,t)\frac{dt}{t} = |u|^2_\theta. 
  \end{align}
  For $v^\prime$, we note that on the interval $(e^n,e^{n+1})$ we have 
  $v^\prime(t) = \frac{v_{n+1} - v_n}{e^{n+1} - e^n}$ and therefore for $t \in (e^n,e^{n+1})$
  \begin{align*}
    \|v^\prime(t)\|_{0} &\leq \frac{1}{e-1} e^{-n} (\|v_{n+1} - u\|_{0} + \|v_n - u\|_{0}) 
    \leq \frac{2}{e-1} e^{-n} (k(u,e^{n+1}) + k(u,e^n))  \\
    &\leq \frac{2(1+e)}{e-1} e^{-n} k(u,e^{n}), 
  \end{align*}
  so that 
  \begin{align*}
    \int_{t=0}^\infty t^{2(1-\theta)} \|v^\prime(t)\|^2_0 \frac{dt}{t} 
    \lesssim \sum_{n \in \Z} e^{2(1-\theta) n} e^{-2n} k^2(u,e^n)
    \sim \int_{t=0}^\infty t^{-2\theta} k^2(u,t)\frac{dt}{t} = |u|^2_\theta.
  \end{align*}
  \textbf{Proof of~(\ref{item:lemma:k-trace-space-ii}).}
  From $v(t) - v(\varepsilon) = \int_{\varepsilon}^t v^\prime(s)\,ds$, we infer 
  for $ t \ge \varepsilon$ the estimate
  \begin{align*}
    \|v(t) - v(\varepsilon)\|_{0} \leq 
    \sqrt{\int_{s=0}^t s^{2(1-\theta)} \|v^\prime(s)\|^2_{0}\, \frac{ds}{s} }
    \sqrt{\int_{\varepsilon}^t s^{-1+2\theta}\,ds} 
    \leq C Z(v,t) t^{\theta}, 
  \end{align*}
  where 
  $\displaystyle Z(v,t)^2:=\int_{s=0}^t s^{2(1-\theta)} \|v^\prime(s)\|^2_{0}\, \frac{ds}{s}$.  
  By assumption, $\sup_{t>0} Z(v,t) < \infty$. This shows that $\lim_{t \rightarrow 0} v(t) =: v(0)$ exists 
  (convergence in $X_0$). 
  Next, we estimate 
  $k(v(0),t)  = \inf_{w \in X_1} \|v(0) - w\|_{0} + t|w|_{1} \leq \|v(0) - v(t)\|_{0} + t|v(t)|_{1}$.
  The observation $v(t) - v(0) =\int_{s=0}^t v^\prime(s),ds$ implies with Hardy's inequality 
  \begin{align*}
    \int_{t=0}^\infty t^{-2\theta} \|v(t) - v(0)\|_0^2\frac{dt}{t} 
    \leq 
    \int_{t=0}^\infty t^{-2\theta+2} \left|\frac{1}{t} \int_{s=0}^t \|v^\prime(s)\|_{0} \,ds\right|^2\frac{dt}{t}
    \lesssim 
    \int_{t=0}^\infty t^{2 (1-\theta)}  \|v^\prime(t)\|^2_{0} \,\frac{dt}{t} 
  \end{align*}
  so that  
  \begin{align*}
    |v(0)|^2_\theta &= \int_{t=0}^\infty t^{-2\theta} k^2(v(0),t)\frac{dt}{t} 
    \leq \int_{t=0}^\infty t^{-2\theta} \|v(t) - v(0)\|_{0}^2  + t^{2-2\theta} |v(t)|_1^2\frac{dt}{t}\\
    &\lesssim  \int_{t=0}^\infty t^{2(1-\theta)} \|v^\prime\|_{0}^2  + t^{2-2\theta} |v(t)|_1^2\frac{dt}{t}. 
  \end{align*}
  This concludes the argument.
\end{proof}
\subsection{Liftings from a triangle to a tetrahedron and prism} 
As laid out in Section~\ref{sec:construction}, we present now the lifting operator $\cA$ from the reference triangle to the reference
tetrahedron. This operator is the first building block of our overall lifting procedure.
\begin{lemma}\label{lemma:lifting-from-triangle}
  There exists a linear operator $\cA : L^1_{loc}(\Tref) \rightarrow C^\infty(\tetref)$ with the following properties:
  \begin{enumerate}[(i)]
    \item \label{item:lemma:lifting-from-triangle-1} If $u$ is a polynomial of degree $p\geq 0$, then $\cA $ is a polyonomial of degree $p$.
    \item \label{item:lemma:lifting-from-triangle-2} If $u$ is continuous at a point $\bx\in\Tref$, then $(\cA u)(\bx,0) = u(\bx)$.
    \item \label{item:lemma:lifting-from-triangle-3} 
      For every $\gamma > -1/2$ there is a constant $C_\gamma$ such that
      \begin{align*}
	\| d_{\Tref\times\{0\} }^\gamma \cA u \|_{L^2(\tetref)} \leq C_\gamma \| u \|_{L^2(\Tref)}.
      \end{align*}
    \item \label{item:lemma:lifting-from-triangle-4} Let $\widehat f\neq \Tref\times\{0\}$ be a face of $\tetref$ and
      $\widehat e$ be the edge shared by $\widehat f$ and $\Tref\times\{0\}$. Then, for every $\gamma\in\R$ there is a constant $C_\gamma>0$ such that
      \begin{align*}
	\| d_{\widehat e}^\gamma \cA  u \|_{L^2(\widehat f)} \leq C_\gamma \| d_{\widehat e}^\gamma u \|_{L^2(\Tref)}.
      \end{align*}
    \item \label{item:lemma:lifting-from-triangle-5} Let $\widehat e$ be an edge of $\tetref$ from the top $\left( 0,0,1 \right)$ to the vertex
      $\widehat V\neq\left( 0,0,1 \right)$. Then, for every $\gamma<3/2$ there is a constant $C_\gamma>0$ such that
      \begin{align*}
	\| d_{\widehat V}^\gamma \cA u \|_{L^2(\widehat e)} \leq C_{\gamma}\| d_{\widehat V}^{\gamma - 1/2} u \|_{L^2(\Tref)}.
      \end{align*}
    \item \label{item:lemma:lifting-from-triangle-6}
      Let $\widehat f\neq \Tref\times\{0\}$ be a face of $\tetref$ and
      $\widehat e$ be an edge of $\tetref$ from the top $\left( 0,0,1 \right)$ to the vertex $\widehat V\neq\left( 0,0,1 \right)$.
      For every $\theta\in(0,1)$ and $k\in\N$ there is a constant $C_{\theta,k}>0$ such that
      for $\bk\in\N_0^3$ with $\abs{\bk}=k\geq 1$ there holds
      \begin{align*}
	\| d_{\Tref\times\{0\}}^{k-1/2-\theta} \partial^{\bk} \cA u \|_{L^2(\tetref)} + 
	\| d_{\Tref\times\{0\}}^{k-\theta} \partial^{\bk} \cA u \|_{L^2(\widehat f)} + 
	\| d_{\widehat V}^{k+1/2-\theta} \partial^{\bk} \cA u \|_{L^2(\widehat e)}
	\leq C_{\theta,k}| u |_{H^\theta(\Tref)}.
      \end{align*}
    \item \label{item:lemma:lifting-from-triangle-7} For every $\varepsilon>0$ and $j\in\N\cup\left\{ 0 \right\}$, there is $C_{\varepsilon,j}>0$
      such that
      \begin{align*}
	\| \cA  u \|_{W^{j,\infty}(\tetref_\varepsilon)} \leq C_{\varepsilon,j} \| u \|_{L^2(\Tref)}.
      \end{align*}
    \item \label{item:lemma:lifting-from-triangle-8} For $\varepsilon>0$ consider the set
      $\Tref_{\varepsilon}$,
      and for $z\in[0,1]$ the scaled versions $(1-z)\Tref_\varepsilon$.
      Then, for arbitrary $\delta$ with $\delta>\varepsilon$
      and $\bk\in\N_0^3$ with $\abs{\bk}\leq 1$ there holds
      \begin{align*}
	\| \partial^\bk \cA u (\cdot,z) \|_{L^{\infty}( (1-z)\Tref_\varepsilon)}
	\lesssim \| u \|_{L^2(\Tref)} + \| u \|_{W^{\abs{\bk},\infty}(\Tref_\delta)},
      \end{align*}
      with a constant depending only on the difference $\delta-\varepsilon$.
  \end{enumerate}
\end{lemma}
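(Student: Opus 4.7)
The plan is to take $\cA$ to be a Gagliardo-type averaging operator in which the averaging scale is the vertical coordinate $z$. Concretely, fix once and for all a kernel $\phi \in C_c^\infty(\R^2)$ with $\int\phi = 1$ whose support is a small ball about an interior point $\by_0 \in \Tref$, chosen so that for every $(\bx,z) \in \tetref$ and every $\bv \in \supp\phi$ the translated point $\bx + z(\bv - \by_0)$ lies in $\Tref$ (recall that the cross-section of $\tetref$ at height $z$ is a scaled copy of $\Tref$, so such a choice is possible). Define
\begin{align*}
(\cA u)(\bx,z) := \int_{\R^2} u\bigl(\bx + z(\bv - \by_0)\bigr)\phi(\bv)\,d\bv.
\end{align*}
This representation immediately yields (i) (the integrand is polynomial in $(\bx,z)$ of the same total degree as $u$), (ii) (at $z=0$ the integrand collapses to $u(\bx)$ and Lebesgue differentiation handles points of continuity), and smoothness of $\cA u$ in $\overline{\tetref}\setminus(\Tref\times\{0\})$ via the equivalent convolution form $(\cA u)(\bx,z) = z^{-2}\int u(\by)\phi(\by_0 + (\by-\bx)/z)\,d\by$.

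The $L^2$-type estimates (iii)--(v) then follow from Minkowski's integral inequality applied to either the translation form or its convolution form, combined with a change of variables that converts integration over $\tetref$ (or over a lateral face, or over a lateral edge) into integration over $\Tref$ against an integrable power of $z$. Specifically, writing $(1-z)\Tref$ as the cross-section at height $z$, the integrability condition $\gamma > -1/2$ in (iii) comes from $\int_0^1 z^{2\gamma}\,dz < \infty$; the analogous statements on the lateral face and edge require only the weaker conditions $\gamma \in \R$ (because the boundary weight $d_{\widehat e}^\gamma$ absorbs the singularity) and $\gamma < 3/2$ respectively, and can be read off by restricting the Jacobian computation to the appropriate stratum, very much in the spirit of Lemma~\ref{lem:ort}. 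The local bounds (vii) and (viii) are elementary: on $\tetref_\varepsilon$ the averaging scale is bounded below, so differentiating under the integral and applying Cauchy-Schwarz against $\phi$ and its derivatives gives everything; the separation in (viii) between the global $L^2$-part and the local $L^\infty$-part is obtained by splitting $\phi$ into the parts whose translated support meets $\Tref_\delta$ or its complement.

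The main obstacle is (vi), the fractional-seminorm estimate. The key identity here is
\begin{align*}
\partial_\bx^{\bk}\cA u(\bx,z) = (-z)^{-|\bk|}\int u(\bx+z(\bv-\by_0))\,\partial_\bv^{\bk}\phi(\bv)\,d\bv,
\end{align*}
obtained by rewriting $\partial_{\bx_j}$ of the translated $u$ as $z^{-1}\partial_{\bv_j}$ followed by integration by parts. Since $|\bk|\ge 1$, the kernel $\partial^{\bk}\phi$ has vanishing integral, so $u(\bx + z(\bv-\by_0))$ can be replaced by $u(\bx + z(\bv-\by_0)) - u(\bx)$. Cauchy-Schwarz against $|\partial^{\bk}\phi|$, the substitution $\by = \bx + z(\bv - \by_0)$ (Jacobian $z^2$), and integration of the weight $z^{2|\bk|-1-2\theta}\cdot z^{-2|\bk|}\cdot z^{-2}$ in $z$ over the interval $\{z : |\by-\bx| \le zr\}\cap(0,1)$ produces exactly the singular factor $|\by-\bx|^{-2-2\theta}$, thereby yielding $\int z^{2|\bk|-1-2\theta}|\partial^{\bk}\cA u|^2 \lesssim |u|_{H^\theta(\Tref)}^2$ by Fubini. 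Mixed derivatives involving $\partial_z$ are treated identically after using $\partial_z u(\bx+z(\bv-\by_0)) = (\bv-\by_0)\cdot\nabla u$, which scales in the same fashion. The face- and edge-contributions to (vi) are obtained by the same computation restricted to the respective stratum, with the extra half-power in the weight accounting for the codimension.

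The bookkeeping in (vi) -- tracking powers of $z$ through the chain rule, the integration by parts, and the change of variables, and checking that the resulting $z$-integral is exactly the one producing the Aronszajn-Slobodeckij seminorm -- is the only genuinely delicate part. Everything else is either algebraic (for (i), (ii)) or a consequence of Minkowski and weighted-$L^2$ bookkeeping already present in Lemma~\ref{lem:ort}.
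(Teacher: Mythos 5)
Your construction is the paper's construction: a Gagliardo-type average of $u$ over a region of diameter proportional to $z$ (the paper uses $(\cA u)(\bx,z)=\int_{\Tref}\rho(\bxi)u(\bx+\tfrac z2\bxi)\,d\bxi$ with $\rho$ supported in $\Tref$), and your treatment of (i), (ii), (iii), (iv), (vii), (viii) and of the volume and face terms in (vi) — in particular the identity $\partial^{\bk}_{\bx}\cA u=(-z)^{-|\bk|}\int u(\cdot)\partial^{\bk}\phi$, the mean-zero kernel, and the $z$-integration producing $|\bx-\by|^{-2-2\theta}$ — coincides with what the paper does. However, two steps as you describe them would fail.

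First, item (v). You claim the full range $\gamma<3/2$ follows from ``Minkowski plus a change of variables / Jacobian computation.'' It does not. Minkowski's inequality reduces the left-hand side to an integral of $|u|^2$ along one-dimensional rays emanating from $\widehat V$, which an $L^2$ function need not have; and the Cauchy--Schwarz-plus-Fubini route gives
\begin{align*}
\int_0^1 z^{2\gamma}|\cA u(\bx(z),z)|^2\,dz\;\lesssim\;\int_{\Tref}|u(\bs)|^2\int_{c\,d_{\widehat V}(\bs)}^{1}z^{2\gamma-2}\,dz\,d\bs,
\end{align*}
whose inner integral behaves like $d_{\widehat V}(\bs)^{2\gamma-1}$ only for $\gamma<1/2$; for $\gamma\in[1/2,3/2)$ it is $O(1)$ (or logarithmic), which yields $\|u\|_{L^2(\Tref)}$ but \emph{not} the stated, strictly smaller quantity $\|d_{\widehat V}^{\gamma-1/2}u\|_{L^2(\Tref)}$. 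The missing ingredient is a weighted Hardy inequality: pass to polar coordinates around $\widehat V$, so that $|\cA u(\bx(z),z)|\lesssim z^{-2}\int_0^z\int_0^{2\pi}|u(\widehat V+\alpha(r\cos\varphi,r\sin\varphi))|\,r\,d\varphi\,dr$, apply Hardy in the radial variable (valid precisely for $2\gamma-2<1$, i.e.\ $\gamma<3/2$ — this is where the threshold comes from), and only then H\"older in the angular variable. This is how the paper proves (v).

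Second, the edge term in (vi). Your recipe is to replace $u(\bx+z(\bv-\by_0))$ by $u(\bx+z(\bv-\by_0))-u(\bx)$ and run ``the same computation restricted to the stratum.'' On the lateral edge the base point $\bx=\bx(z)$ runs over a one-dimensional curve, so the pointwise value $u(\bx(z))$ is not defined for $u\in H^\theta(\Tref)$ with $\theta\le 1/2$, and the resulting curve-times-area integral cannot be converted into the Aronszajn--Slobodeckij double integral by Fubini. You must either (a) subtract instead the mean $\frac{1}{|R_z|}\int_{R_z}u$ over the averaging region $R_z$, which turns the bound into a genuine double integral over $R_z\times R_z$ with $|\bs-\bs'|\lesssim z$ and then Fubini does close the argument, or (b) do what the paper does: deduce the edge bound from the face bound via a one-dimensional trace inequality, at the cost of one extra derivative of $\cA u$ on the face — which is exactly why the volume and face estimates in (vi) are stated for all orders $k$, not just $k=1$. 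Either fix works, but as written the step is not valid.
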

\begin{proof}
  The lifting operator $\cA $ will be defined by an averaging process. To that end, define for $(\bx,z)\in\tetref$ the mapping
  \begin{align*}
    F_{(\bx,z)}:
    \begin{cases}
      \Tref \rightarrow \Tref\\
      \bxi \mapsto \bx + \frac{z}{2} \bxi.
    \end{cases}
  \end{align*}
  Fix a mollifier $\rho\in C^{\infty}(\R^2)$ with $\supp(\rho)\subset \Tref$ and $\int_{\Tref}\rho(\by)\,d\by=1$ and define
  \begin{align}\label{eq:lemma:lifting-from-triangle:0}
    ( \cA  u )(\bx,z) := \int_{\Tref} \rho(\bxi) u ( F_{(\bx,z)}(\bxi))\,d\bxi
    = \frac{4}{z^2} \int_{\R^2} \rho\left( \frac{\bs-\bx}{z/2} \right) u(\bs)\,d\bs.
  \end{align}
  This formula is well defined as $(\bx,z)\in\tetref$ implies $F_{(\bx,z)}(\Tref)\subset\Tref$.
  We note that $\cA u\in C^{\infty}(\tetref)$.
  We calculate for $j=1,2$, cf.~\cite[Lemma 1.4.1.4]{Grisvard}, for any constant $c\in\R$
  \begin{align*}
    (\partial_j\cA u)(\bx,z) &= \frac{2}{z} \int_{\Tref} \partial_j\rho(\bxi) [c - u(F_{(\bx,z)}(\bxi))]\,d\bxi,\\
    (\partial_3\cA u)(\bx,z) &= -\frac{2}{z} \int_{\Tref} \rho(\bxi) [c - u(F_{(\bx,z)}(\bxi))]\,d\bxi
    - \frac{1}{z} \int_{\Tref} \nabla\rho(\bxi) \cdot \bxi \cdot [c - u(F_{(\bx,z)}(\bxi))]\,d\bxi,
  \end{align*}
  and inductively we can conclude for $|\bk|=k\in\N$, $k\geq 1$, the basic estimates
  \begin{align}
      | \cA u (\bx,z) | 
      &\lesssim \frac{1}{z^{2}} \int_{\bx+\frac{z}{2} \Tref} |u(\bs)|\,d\bs,\label{eq:lemma:lifting-from-triangle:1}\\
      | \partial^{\bk} \cA u (\bx,z) | 
      &\leq C_k \frac{1}{z^{2+k}} \min_{c\in\R}\int_{\bx+\frac{z}{2} \Tref} |c - u(\bs)|\,d\bs.\label{eq:lemma:lifting-from-triangle:2}
  \end{align}

  \textbf{Proof of~(\ref{item:lemma:lifting-from-triangle-1}).}
  This follows at once as $(\bx,z)\mapsto F_{(\bx,z)}(\bxi)$ is affine for fixed $\bxi$.

  \textbf{Proof of~(\ref{item:lemma:lifting-from-triangle-2}).}
  This follows by inspection.

  \textbf{Proof of~(\ref{item:lemma:lifting-from-triangle-3}).}
  The estimate~\eqref{eq:lemma:lifting-from-triangle:1} and Cauchy-Schwarz imply
  \begin{align*}
    |\cA  u(\bx,z)|^2 \lesssim \frac{1}{z^2} \int_{\frac{z}{2} \Tref} \abs{u\left( \bx+\bs \right)}^2\,d\bs
    \leq \frac{1}{z^2} \int_{z \Tref} \abs{u\left( \bx+\bs \right)}^2\,d\bs.
  \end{align*}
  Using Fubini, we get
  \begin{align*}
    \| d_{\Tref\times\{0\}}^\gamma\cA u\|_{L^2(\tetref)}^2 
    &\lesssim \int_{z=0}^1 \frac{1}{z^{2-2\gamma}} \int_{z\Tref} \int_{(1-z)\Tref}\abs{u\left( \bx+\bs \right)}^2\,d\bx\,d\bs\,dz.
  \end{align*}
  As $\Tref$ is convex, $\bx\in (1-z)\Tref$ and $\bs\in z\Tref$ imply $\bx+\bs \in\Tref$, and we conclude
  \begin{align*}
    \| d_{\Tref\times\{0\}}^\gamma\cA u\|_{L^2(\tetref)}^2 \lesssim \|u\|_{L^2(\Tref)}^2 \int_{z=0}^1
    \frac{1}{z^{2-2\gamma}} \int_{z \Tref}\,d\bs\,dz
    \leq C_\gamma \|u\|_{L^2(\Tref)}^2.
  \end{align*}

  \textbf{Proof of~(\ref{item:lemma:lifting-from-triangle-4}).}
  Let $T' = \left\{ \bx\in\Tref \mid \exists z \text{ such that } (\bx,z)\in\widehat f \right\}$.
  Consider $z$ as a function of $\bx$.
  Note that $\bx\mapsto z(\bx)$ is affine and has the form
  $z(u,v) = 1 + u z_x + v z_y$ for some $z_x,z_y\in\R$.
  For $\bx\in T'$, it holds $d_{\widehat e}(\bx) \sim z(\bx)$ and hence also
  \begin{align*}
    d_{\widehat e}(\bs) \sim z(\bx) \quad\text{ for all } \bs\in
    \bx + \frac{z(\bx)}{2}\Tref.
  \end{align*}
  We conclude
  \begin{align*}
    \| d_{\widehat e}^\gamma \cA  u \|_{L^2(\widehat f)}^2 &\sim \int_{T'} z(\bx)^{2\gamma} | \cA  u (\bx,z(\bx))|^2\,d\bx
    \lesssim \int_{T'} z(\bx)^{2\gamma-2} \int_{\bx+\frac{z(\bx)}{2} \Tref} \abs{u(\bs)}^2\,d\bs\,d\bx\\
    &\lesssim \int_{T'} z(\bx)^{-2} \int_{\bx+\frac{z(\bx)}{2} \Tref} d_{\widehat e}(\bs)^{2\gamma}\abs{u(\bs)}^2\,d\bs\,d\bx\\
    &\lesssim \int_{T'} \int_{\Tref} f(\bx + z(\bx)\bxi/2)\,d\bxi\,d\bx,
  \end{align*}
  where we wrote $f(\bs) := d_{\widehat e}(\bs)^{2\gamma}\abs{u(\bs)}^2$ and used the substitution $\bs = \bx+z(\bx)\bxi/2$.
  We apply Fubini and the substitution $\bx' = \bx+z(\bx)\bxi/2$. Note that for $\bx\in T'$ and $\bxi\in \Tref$ it holds $\bx'\in \Tref$,
  so that
  \begin{align*}
    \| d_{\widehat e}^\gamma \cA  u \|_{L^2(\widehat f)}^2 &\lesssim \int_{\Tref} \int_{T'} f(\bx + z(\bx)\bxi/2)\,d\bx\,d\bxi\\
    &\lesssim \int_{\Tref} \int_{\Tref} \frac{f(\bx')}{\abs{1+\xi_1 z_x/2 + \xi_2 z_y/2}}\,d\bx'\,d\xi_1 d\xi_2\\
    &\lesssim \| d_{\widehat e}^\gamma u \|_{L^2(\Tref)}^2.
  \end{align*}
  The last estimate follows from the fact that $\abs{1+\xi_1 z_x/2 + \xi_2 z_y/2} = z(\bxi/2)$, which is bounded from below away from zero uniformly
  in $\bxi\in\Tref$.

  \textbf{Proof of~(\ref{item:lemma:lifting-from-triangle-5}).}
  Parametrize $\widehat e$ by $z\in(0,1)\mapsto (\bx(z),z)=(\bx,z)$ and note $d_{\widehat V}(\bx,z)\sim z$ on $\widehat e$.
  Furthermore, there is a constant $\alpha>0$ such that, extending $u$ by zero outside $\Tref$,
  \begin{align*}
    |\cA  u(\bx,z)| \lesssim 
    \frac{1}{z^2} \int_{B_{z}(0)} | u(\widehat V + \alpha\bs) |\,d\bs = 
    \frac{1}{z^2} \int_0^z\int_0^{2\pi} | u(\widehat V + \alpha (r\cos\phi,r\sin\phi))|r\,d\phi\,dr.
  \end{align*}
  The weighted Hardy inequality~\cite[Thm.~I.9.16]{Zygmund} for $2\gamma -2<1$ and H\"older show
  \begin{align*}
    \| d_{\widehat V}^\gamma \cA u \|_{L^2(\widehat e)}^2
    &\lesssim\int_0^1 z^{2\gamma-2} \left( z^{-1} \int_0^z\int_0^{2\pi} | u(\widehat V + \alpha (r\cos\phi,r\sin\phi))|r\,d\phi\,dr \right)^2\,dz\\
    &\lesssim\int_0^1 z^{2\gamma} \int_0^{2\pi} | u(\widehat V + \alpha (z\cos\phi,z\sin\phi))|^2\,d\phi\,dz\\
    &\lesssim \| d_{\widehat V}^{\gamma - 1/2} u \|_{L^2(\Tref)}^2.
  \end{align*}
  \textbf{Proof of~(\ref{item:lemma:lifting-from-triangle-6}).} We follow~\cite[Lemma 1.4.1.4]{Grisvard} and
  note that by~\eqref{eq:lemma:lifting-from-triangle:2} and Cauchy-Schwarz
  \begin{align*}
    \| d_{\Tref\times\{0\}}^{k-1/2-\theta} \partial^{\bk} \cA u \|_{L^2(\tetref)}^2
    &\lesssim \int_{\tetref} z^{-3-2\theta} \int_{\bx+\frac{z}{2}\Tref} |u(\bx) - u(\bs)|^2\,d\bs\,d(\bx,z)\\
    &\lesssim \int_{\Tref} \int_{\Tref} |u(\bx) - u(\bs)|^2 \int_{|\bx-\bs|/2}^\infty z^{-3-2\theta}\,dz\,d\bx\,d\bs
    \lesssim | u |_{H^\theta(\Tref)}^2.
  \end{align*}
  To treat the second term on the left-hand side of~(\ref{item:lemma:lifting-from-triangle-6}),
  we use~\eqref{eq:lemma:lifting-from-triangle:1} and the notation introduced
  in the proof of~(\ref{item:lemma:lifting-from-triangle-4}). With Cauchy-Schwarz we calculate
  \begin{align*}
    \| d_{\Tref\times\{0\}}^{k-\theta} \partial^{\bk}\cA u \|_{L^2(\widehat f)}^2 &\sim
    \int_{T'} z(\bx)^{-2-2\theta} \int_{\bx+\frac{z(\bx)}{2} \Tref} |u(\bx) - u(\bs)|^2\,d\bs\,d\bx\\
    &\lesssim \int_{T'} \int_{\bx+\frac{z(\bx)}{2} \Tref} \frac{|u(\bx) - u(\bs)|^2}{|\bx-\bs|^{2+2\theta}}\,d\bs\,d\bx
    \lesssim | u |_{H^\theta(\Tref)}^2.
  \end{align*}
  To treat the third term on the left-hand side of~(\ref{item:lemma:lifting-from-triangle-6}),
  suppose that $\widehat e$ is edge of the lateral face $\widehat f$.
  From the one-dimensional trace inequality
  \begin{align*}
    x|v(x,0)|^2 \lesssim \int_0^x |v(x,y)|^2\,dy + x^2 \int_0^x |\partial_y v(x,s)|^2\,ds
  \end{align*}
  we conclude the trace inequality
  \begin{align*}
    \| d_{\widehat V}^{k+1/2-\theta} \partial^{\bk} \cA u \|_{L^2(\widehat e)}^2 \lesssim 
    \| d_{\Tref\times\{0\}}^{k-\theta} \partial^{\bk} \cA u \|_{L^2(\widehat f)}^2 +
    \sum_{|\bk'|=k+1}\| d_{\Tref\times\{0\}}^{(k+1)-\theta} \partial^{\bk'} \cA u \|_{L^2(\widehat f)}^2,
  \end{align*}
  and the result follows using the estimate for the second term on the left.
%
%

  \textbf{Proof of~(\ref{item:lemma:lifting-from-triangle-7}).} For $j=0$ this follows
    immediately from formula~\eqref{eq:lemma:lifting-from-triangle:1}, and for $j\geq 1$ from formula~\eqref{eq:lemma:lifting-from-triangle:2}.

  \textbf{Proof of~(\ref{item:lemma:lifting-from-triangle-8}).}
  The formulas~\eqref{eq:lemma:lifting-from-triangle:1} and~\eqref{eq:lemma:lifting-from-triangle:2}
  and H\"older's inequality show that
  \begin{align}\label{eq:lemma:lifting-from-triangle:3}
    | \partial^\bk \cA  u (\bx,z)| \lesssim z^{-1-\abs{\bk}} \| u \|_{L^2(\bx+\frac{z}2\Tref)}.
  \end{align}
  Using the Bramble-Hilbert Lemma~\cite[Lem.~4.3.8]{BrennerScott}, and the H\"older inequality we
  can even conclude from formula~\eqref{eq:lemma:lifting-from-triangle:2}
  \begin{align}\label{eq:lemma:lifting-from-triangle:4}
    | \nabla \cA  u (\bx,z)| &\lesssim z^{-3} \min_{c\in\R} \int_{\bx+\frac{z}{2}\Tref} | c - u(\bs) |\,d\bs
    \lesssim z^{-2} \int_{\bx+\frac{z}{2}\Tref} | \nabla u(\bs) |\,d\bs
    \lesssim z^{-1} \| \nabla u \|_{L^2(\bx+\frac{z}2\Tref)}.
  \end{align}
  For $z$ sufficiently small (depending on $\delta-\varepsilon$) and $\bx\in (1-z)\Tref_\varepsilon$
  we have $F_{(\bx,z)}(\Tref) = \bx+\frac{z}2\Tref \subset \Tref_\delta$, and we conclude with the aforegoing estimates
  (\eqref{eq:lemma:lifting-from-triangle:3} for $|\bk|=0$ and~\eqref{eq:lemma:lifting-from-triangle:4} for $|\bk|=1$)
  \begin{align}\label{eq:lemma:lifting-from-triangle:5}
    \| \partial^\bk \cA  u(\cdot,z) \|_{L^\infty( (1-z)\Tref_\varepsilon)} \lesssim \| u \|_{W^{\abs{\bk},\infty}(\Tref_\delta)}.
  \end{align}
  The combination of~\eqref{eq:lemma:lifting-from-triangle:5} for $z$ sufficiently small and~\eqref{eq:lemma:lifting-from-triangle:3}
  for $z$ not sufficiently small proves the desired estimates.
\end{proof}
The operator $\cA$ can be modified to vanish on lateral faces of $\tetref$ if $u$ vanishes on the corresponding
bottom edges of $\Tref$.
\begin{lemma}\label{lemma:lifting-from-triangle:bc}
  Let $\widehat\cE \subset \{ \widehat e_{4}, \widehat e_5, \widehat e_6 \}$ and let
  $\widehat\cF = \{ \widehat f_{j-3}\mid \widehat e_j\in \widehat\cE \}$ be all lateral faces with edge in $\widehat\cE$.
  There exists a linear operator $\cA _{\widehat \cE}: L^1_{loc}(\Tref)\rightarrow C^{\infty}(\tetref)$ with the following properties:
  \begin{enumerate}[(i)]
    \item \label{item:lemma:lifting-from-triangle:bc-1} If $u$ is continuous at a point $\bx\in\Tref$, then $(\cA _{\what\cE} u)(\bx,0)=u(\bx)$.
    \item \label{item:lemma:lifting-from-triangle:bc-2} 
      For every $\gamma > -1/2$ there is a constant $C_\gamma$ such that
      \begin{align*}
	\| d_{\Tref\times\{0\} }^\gamma \cA _{\what\cE}u \|_{L^2(\tetref)} \leq C_\gamma \| u \|_{L^2(\Tref)}.
      \end{align*}
    \item \label{item:lemma:lifting-from-triangle:bc-3}
      The function $\cA _{\what\cE} u$ vanishes on all faces in $\widehat \cF$.
    \item \label{item:lemma:lifting-from-triangle:bc-4} If $u$ is a polynomial of degree $p\geq \#\widehat\cE$
      that vanishes on all edges in $\widehat\cE$,
      then $\cA _{\widehat\cE} u$ is a polyonomial of degree $p$.
    \item \label{item:lemma:lifting-from-triangle:bc-5}
      For every $s\in(0,1)$ there is a constant $C_s>0$ such that
      \begin{align*}
	\| d_{\Tref\times \{0\}}^{1/2-s} \nabla \cA _{\widehat\cE}u \|_{L^2(\tetref)}
	\leq C_s \left( |u|_{H^s(\Tref)} + \| d_{\widehat \cE}^{-s}u \|_{L^2(\Tref)}  \right).
      \end{align*}
    \item \label{item:lemma:lifting-from-triangle:bc-6} For every $\varepsilon>0$ and $j\in\N\cup\left\{ 0 \right\}$, there is $C_{\varepsilon,j}>0$
      such that
      \begin{align*}
	\| \cA_{\what\cE}  u \|_{W^{j,\infty}(\tetref_\varepsilon)} \leq C_{\varepsilon,j} \| u \|_{L^2(\Tref)}.
      \end{align*}
    \item \label{item:lemma:lifting-from-triangle:bc-7}
      For $\varepsilon>0$ consider the set $\Tref_\varepsilon$
      and for $z\in[0,1]$ the scaled versions $(1-z)\Tref_\varepsilon$.
      Then, for sufficiently small $\varepsilon>0$ there is a $\delta>\varepsilon$ depending only on $\varepsilon$ such that
      for $\bk\in\N_0^3$ with $\abs{\bk}\leq 1$ there holds for any $u$ vanishing on $\widehat\cE$ that
      \begin{align*}
	\| \partial^\bk \cA_{\widehat\cE} u (\cdot,z) \|_{L^{\infty}( (1-z)\Tref_\varepsilon)}
	\lesssim \| u \|_{L^2(\Tref)} + \| u \|_{W^{\abs{\bk},\infty}(\Tref_\delta)},
      \end{align*}
    with a constant depending only on $\varepsilon$.
  \end{enumerate}
\end{lemma}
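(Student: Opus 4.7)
The plan is to construct $\cA_{\widehat\cE}$ by modifying the unconstrained lifting $\cA$ from Lemma~\ref{lemma:lifting-from-triangle} through an inclusion-exclusion sum over the faces in $\widehat\cF$, using the orthogonal projections $\Pi_{\widehat f_k}$ onto the lateral faces. Concretely, for a single face $\widehat f_k$ I would take
\begin{align*}
  \cA_{\{\widehat e_{3+k}\}} u(\bx,z) := \cA u(\bx,z) - (\cA u)\bigl(\Pi_{\widehat f_k}(\bx,z)\bigr),
\end{align*}
which manifestly vanishes on $\widehat f_k$. For multiple faces I iterate this subtraction, equivalently writing
\begin{align*}
  \cA_{\widehat\cE} u := \sum_{\widehat\cF'\subseteq\widehat\cF} (-1)^{\#\widehat\cF'}\, (\cA u)\circ \Pi_{\widehat\cF'},
\end{align*}
where $\Pi_{\widehat\cF'}$ is a composition of the projections $\Pi_{\widehat f_k}$ for $\widehat f_k\in\widehat\cF'$; the orthogonality identity~\eqref{eq:ort2} guarantees that, on the relevant faces, the order of composition does not matter and iterated projections land on the shared lateral edges, which is the structural fact that drives every subsequent computation.

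Properties (iii) and (i) are verified first. For (iii), standard inclusion-exclusion cancellation on $\widehat f_k$ pairs each term containing $\Pi_{\widehat f_k}$ with the corresponding term without it, yielding zero. For (i), I evaluate at $(\bx,0)\in\Tref$ under the working hypothesis $u|_{\widehat\cE}=0$: by~\eqref{eq:ort} every nontrivial iterated projection of $(\bx,0)$ collapses onto a point of a lateral edge or of $\widehat\cE$, and Lemma~\ref{lemma:lifting-from-triangle}(ii) combined with $u|_{\widehat\cE}=0$ kills the correction, leaving $\cA_{\widehat\cE} u(\bx,0)=\cA u(\bx,0)=u(\bx)$. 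Property (iv) then follows because the $\Pi$'s are affine, $\cA$ preserves polynomial degree (Lemma~\ref{lemma:lifting-from-triangle}(i)), and the assumption $p\geq\#\widehat\cE$ together with $u|_{\widehat\cE}=0$ allows us to factor a polynomial of degree $\#\widehat\cE$ from $u$, ensuring the corrections remain of degree $p$.

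The estimates (ii), (vi), and (vii) reduce to the corresponding items of Lemma~\ref{lemma:lifting-from-triangle} applied term by term; the crucial step is that Lemma~\ref{lem:ort} converts $L^2$-norms of $(\cA u)\circ\Pi_{\widehat f_k}$ on $\tetref$, $\widehat f$, or an edge into $L^2$-norms of $\cA u$ on $\widehat f_k$ weighted by distances, which are in turn controlled via Lemma~\ref{lemma:lifting-from-triangle}(iii)--(iv) by the original $\|u\|_{L^2(\Tref)}$, and likewise at the $L^\infty$-level by Lemma~\ref{lemma:lifting-from-triangle}(vii)--(viii). Consistent selection of the weight exponents $\gamma$ (subject to the integrability thresholds in Lemma~\ref{lem:ort}) is the only bookkeeping point.

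The main obstacle is the fractional gradient estimate (v). I split $\nabla \cA_{\widehat\cE} u$ by the inclusion-exclusion; the leading summand is handled directly by Lemma~\ref{lemma:lifting-from-triangle}(vi) with $k=1$, $\theta=s$, contributing $|u|_{H^s(\Tref)}$. For each correction $(\cA u)\circ\Pi_{\widehat f_k}$ the chain rule yields gradients of $\cA u$ evaluated on slices of $\tetref$ parallel to $\widehat f_k$, and integrating against the weight $d_{\Tref\times\{0\}}^{1/2-s}$ reduces, via~\eqref{eq:lemma:lifting-from-triangle:bc:2}, to a weighted $L^2$-norm of $\nabla\cA u$ on $\widehat f_k$. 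The contribution from interior regularity is again bounded by $|u|_{H^s(\Tref)}$ via Lemma~\ref{lemma:lifting-from-triangle}(vi); the contribution from the thin layer near $\widehat e_{3+k}$ picks up the singular weight $d_{\widehat e_{3+k}}^{-s}$, which is precisely what $u|_{\widehat\cE}=0$ renders integrable and where the $\|d_{\widehat\cE}^{-s} u\|_{L^2(\Tref)}$ term on the right-hand side absorbs the remainder. Summing over all $\widehat f_k\in\widehat\cF$ yields~(v).
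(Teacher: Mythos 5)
Your construction fails the lifting property (i), and that failure is fatal. Your correction for a single face is $\cA u - (\cA u)\circ\Pi_{\widehat f_k}$, and at $z=0$ this evaluates to $u(\bx) - (\cA u)\bigl(\Pi_{\widehat f_k}(\bx,0)\bigr)$. But for $\bx$ in the interior of $\Tref$, the orthogonal projection $\Pi_{\widehat f_k}(\bx,0)$ does \emph{not} land on $\widehat e_{3+k}$, nor on a lateral edge: it lies strictly in the interior of $\widehat f_k$ (equivalently, strictly interior to $\tetref$) at positive height $z>0$, where $\cA u$ is a genuine average over a region of $\Tref$ and is nonzero in general. Your appeal to~\eqref{eq:ort} is misplaced: that identity concerns the projection of a face onto the \emph{opposite lateral edge}, not the projection of the bottom face onto a lateral face. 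The hypothesis $u|_{\widehat\cE}=0$ does not help here, and indeed property (i) of the lemma is required for \emph{all} continuous $u$. Your estimates for (v), which invoke singular weights ``from the thin layer near $\widehat e_{3+k}$,'' describe the structure of the paper's construction, not yours; your own chain rule gives only $(\nabla\cA u)\circ\Pi_{\widehat f_k}$ (times a constant matrix), so the attribution is internally inconsistent.

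The paper avoids exactly this problem by multiplying each correction by a rational factor: the vertex correction is multiplied by $z$, the edge correction by $c_j\, z/p_j(x,y,z)$, and the face correction by $z/p_{\widehat f}(x,y)$. Each of these factors equals $1$ on the targeted edge or face (so the subtraction cancels there) \emph{and} vanishes at $z=0$ (so the lifting property is preserved). These factors are the reason the $H^s$ estimate (v) needs the $\|d_{\widehat\cE}^{-s}u\|_{L^2(\Tref)}$ term (the gradient of $z/p$ introduces $d^{-1}$-type singularities near the edges), they are the reason polynomial division is needed in (iv) (whence $p\geq\#\widehat\cE$ and $u|_{\widehat\cE}=0$), and they necessitate a hierarchical construction (first eliminate the vertex value $u_1(\widehat\bv_4)=0$, then the lateral edges, then the faces) so that each subtraction does not reintroduce errors on the previously cleared sets. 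Your inclusion-exclusion over faces alone bypasses the hierarchy, but in exchange it needs the correction terms to vanish automatically at $z=0$, which they do not. If you want to pursue this route, you must reintroduce multipliers with the two properties above; once you do, you will find yourself reconstructing the paper's argument.
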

\begin{proof}
  If $\widehat\cE=\emptyset$, then we set $\cA _{\widehat\cE}=\cA $ the operator from Lemma~\ref{lemma:lifting-from-triangle}.
  If $\widehat\cE$ is not empty, the construction will be carried out in several steps.

  \textbf{Step 1.} Define
  \begin{align*}
    u_1(x,y,z) = \cA  u(x,y,z) - z \cA u(\widehat \bv_4),
  \end{align*}
  note that $u_1(\widehat \bv_4)=0$ and therefore also
  \begin{align}\label{eq:lemma:lifting-from-triangle:bc:17}
    f_k\in\widehat\cF \implies u_1\circ\Pi_{\widehat e_k}|_{\widehat f_k}=u_1(\widehat \bv_4)=0.
  \end{align}
  \textbf{Step 2.} We will subtract edge contributions corresponding to all lateral edges of all faces in $\widehat \cF$.
  To that end, define the corresponding indices
  $N = \{j \in \{1,2,3\} \mid \widehat e_j \text{ is lateral edge of } \widehat f\in \widehat \cF\}$.
  For $j\in N$, let $p_{j}(x,y,z)=0$ be the affine equation of the hyperplane orthogonal to $\widehat e_j$ and passing
  through the point $\widehat \bv_j$, and for convenience let $p_j$ be positive on $\tetref$.
  We claim that there is a constant $c_j\neq 0$ such that for $(x,y,z)\in \widehat e_j$ it holds
  $p_j(x,y,z)= c_j z$.
  Indeed, write $p_j(x,y,z) = \ell_j(x,y,z) + d_j$ with $\ell_j$ linear and $d_j\in\R$. Parametrize $\widehat e_j$ by $z\mapsto \widehat \bv_j + z \bn_j$
  and calculate $p_j(\widehat \bv_j+z\bn_j) = \ell_j(\widehat \bv_j) + z\ell_j(\bn_j) + d_j = z\ell_j(\bn_j)$.
  Define
  \begin{align*}
    u_2(x,y,z) = u_1(x,y,z) - \sum_{j\in N}
    c_j \frac{z}{p_j(x,y,z)} u_1(\Pi_{\widehat e_j}(x,y,z)),
  \end{align*}
  where $\Pi_{\widehat e_j}$ is the affine function calculating the orthogonal projection onto the line spanned by $\widehat e_j$.
  Note that $p_j(x,y,z)$ is proportional to the distance of $(x,y,z)\in\tetref$ to the hyperplane $p_j=0$, and
  hence
  \begin{align}\label{eq:lemma:lifting-from-triangle:bc:10}
    p_j(x,y,z) \sim d_{\widehat\bv_j}(x,y,z) > z \qquad\text{ for } (x,y,z)\in\tetref.
  \end{align}
  Clearly, $|\nabla p_j(x,y,z)|\sim 1$, and this shows that
  \begin{align}\label{eq:lemma:lifting-from-triangle:bc:11}
    |\nabla \frac{z}{p_j(x,y,z)}| \lesssim \frac{1}{d_{\widehat\bv_j}(x,y,z)} \qquad\text{ for } (x,y,z)\in\tetref.
  \end{align}

  \textbf{Step 3.} We will subtract face contributions corresponding to all faces in $\widehat \cF$.
  Let $\widehat f\in\widehat \cF$ be contained in the plane given by the affine equation $p_{\widehat f}(x,y)-z=0$,
  and for convenience let $p_{\widehat f}(x,y)-z$ be positive on $\tetref$.
  Let $\Pi_{\widehat f}$ be the affine function calculating
  the orthogonal projection onto this plane. Define
  \begin{align*}
    \cA _{\widehat \cE} u(x,y,z) = u_2(x,y,z) - \sum_{\widehat f\in \widehat \cF} \frac{z}{p_{\widehat f}(x,y)} u_2(\Pi_{\widehat f}(x,y,z)).
  \end{align*}
  For $\widehat f_k\in\what\cF$, as $p_{\widehat f_k}(x,y)-z$ is positive on $\tetref$, it holds
  \begin{align}\label{eq:lemma:lifting-from-triangle:bc:18}
    \frac{z}{p_{\widehat f_k}(x,y)}<1.
  \end{align}
  For a point $(x,y,z)\in\tetref$, consider angles $\sin\alpha= z / d_{\what e_{k+3}}(x,y,z)$ and
  $\sin\beta = d_{\what f_k}(x,y,z)/d_{\what e_{k+3}}(x,y,z)$.
  Recall that $\widehat e_{k+3}$ is the edge that $\widehat f_k$ shares with the base $\Tref\times\left\{ 0 \right\}$
  of the tetrahedron, and hence $\alpha+\beta$ is constant.
  Furthermore, $p_{\widehat f_k}(x,y)-z$ is proportional to the distance of $(x,y,z)\in\tetref$ to the plane spanned by $\widehat f_k$, and hence
  \begin{align}\label{eq:lemma:lifting-from-triangle:bc:12}
    p_{\widehat f_k}(x,y) \geq C d_{\widehat f_k}(x,y,z) + z = d_{\widehat e_{k+3}}(x,y,z)\left( C\sin\beta+\sin\alpha \right)\gtrsim d_{\widehat e_{k+3}}(x,y,z).
  \end{align}
  Now, with~\eqref{eq:lemma:lifting-from-triangle:bc:18},
  \begin{align*}
    |\partial_x \frac{z}{p_{\widehat f_k}(x,y)}|
    +|\partial_y \frac{z}{p_{\widehat f_k}(x,y)}|
    = |\frac{z\partial_x p_{\what f_k}(x,y)}{p_{\what f_k}(x,y)^2}| 
    + |\frac{z\partial_y p_{\what f_k}(x,y)}{p_{\what f_k}(x,y)^2}| 
    \lesssim \frac{1}{p_{\what f_k}(x,y)},
  \end{align*}
  and also
  \begin{align*}
    |\partial_z \frac{z}{p_{\widehat f_k}(x,y)}| = \frac{1}{p_{\what f_k}(x,y)}.
  \end{align*}
  We conclude, using~\eqref{eq:lemma:lifting-from-triangle:bc:12},
  \begin{align}\label{eq:lemma:lifting-from-triangle:bc:13}
    |\nabla \frac{z}{p_{\widehat f_k}(x,y)}| \lesssim \frac{1}{d_{\widehat e_{k+3}}(x,y,z)}.
  \end{align}

  The operator $\cA _{\widehat \cE}$ is clearly linear and $\cA _{\widehat\cE} u\in C^{\infty}(\tetref)$.

  \textbf{Proof of~(\ref{item:lemma:lifting-from-triangle:bc-1}).} This follows by construction, as $\cA u(\bx)=u(\bx)$ due to
  Lemma~\ref{lemma:lifting-from-triangle} (\ref{item:lemma:lifting-from-triangle-1}), and the correction terms are all multiplied by $z$.

  \textbf{Proof of~(\ref{item:lemma:lifting-from-triangle:bc-2}).} Due to
  Lemma~\ref{lemma:lifting-from-triangle} (\ref{item:lemma:lifting-from-triangle-3}), (\ref{item:lemma:lifting-from-triangle-4}),
  (\ref{item:lemma:lifting-from-triangle-5}) and (\ref{item:lemma:lifting-from-triangle-7}), it follows for $-1/2<\gamma$
  \begin{align}\label{eq:lemma:lifting-from-triangle:bc:3}
  \| d_{\Tref\times\{0\}}^\gamma u_1 \|_{L^2(\tetref)} + \| d_{\widehat e_{3+k}}^{1/2+\gamma} u_1 \|_{L^2(\widehat f_k)}
  + \| d_{\widehat \bv_j}^{1+\gamma} u_1 \|_{L^2(\widehat e_j)}
    \lesssim \| u \|_{L^2(\Tref)}
  \end{align}
  for $j\in N$ and $\widehat f_k\in\widehat \cF$. Then,
  \begin{align}\label{eq:lemma:lifting-from-triangle:bc:4}
    \begin{split}
    \| d_{\Tref\times\{0\}}^\gamma u_2 \|_{L^2(\tetref)}
    \lesssim \| d_{\Tref\times\{0\}}^\gamma u_1 \|_{L^2(\tetref)} + \sum_{j\in N} \| d_{\Tref\times\{0\}}^\gamma u_1\circ \Pi_{\widehat e_j} \|_{L^2(\tetref)}
    &\lesssim \| u \|_{L^2(\Tref)},
    \end{split}
  \end{align}
  where we used~\eqref{eq:lemma:lifting-from-triangle:bc:10} in the first inequality,
  and~\eqref{eq:lemma:lifting-from-triangle:bc:1} (with $\alpha=0,\beta=\gamma$) and~\eqref{eq:lemma:lifting-from-triangle:bc:3} in the second one.
  On $\what f_k$ there holds $d_{\what e_{3+k}}\sim d_{\Tref}$, so that we obtain
  \begin{align}\label{eq:lemma:lifting-from-triangle:bc:15}
    \begin{split}
    \| d^{1/2+\gamma}_{\widehat e_{3+k}} u_2 \|_{L^2(\widehat f_k)}
    &\lesssim \| d^{1/2+\gamma}_{\widehat e_{3+k}} u_1 \|_{L^2(\widehat f_k)}
    + \sum_{\substack{j\in N\\j\neq k}} \| d^{1/2+\gamma}_{\widehat e_{3+k}} u_1\circ \Pi_{\widehat e_j} \|_{L^2(\widehat f_k)}\\
    &\lesssim \| d^{1/2+\gamma}_{\widehat e_{3+k}} u_1 \|_{L^2(\widehat f_k)}
    + \sum_{\substack{j\in N\\j\neq k}} \| d^{1/2+\gamma}_{\Tref} u_1\circ \Pi_{\widehat e_j} \|_{L^2(\widehat f_k)}
    \lesssim \| u \|_{L^2(\Tref)},
    \end{split}
  \end{align}
  where we used~\eqref{eq:lemma:lifting-from-triangle:bc:17} and~\eqref{eq:lemma:lifting-from-triangle:bc:10} in the first inequality,
  and~\eqref{eq:lemma:lifting-from-triangle:bc:1} (with $\alpha=0,\beta=\gamma$) and~\eqref{eq:lemma:lifting-from-triangle:bc:3} in the
  last one.
  We conclude with~\eqref{eq:lemma:lifting-from-triangle:bc:18},~\eqref{eq:lemma:lifting-from-triangle:bc:2}
  and~\eqref{eq:lemma:lifting-from-triangle:bc:4},~\eqref{eq:lemma:lifting-from-triangle:bc:15}
  \begin{align*}
    \| d_{\Tref\times\{0\}}^\gamma \cA _{\what\cE}u \|_{L^2(\tetref)}
    &\lesssim \| d_{\Tref\times\{0\}}^\gamma u_2 \|_{L^2(\tetref)} + \sum_{\widehat f_k\in\widehat\cF}
    \| d_{\Tref\times\{0\}}^\gamma u_2\circ \Pi_{\widehat f_k} \|_{L^2(\tetref)}\\
    &\lesssim \| d_{\Tref\times\{0\}}^\gamma u_2 \|_{L^2(\tetref)} + \sum_{\widehat f_k\in\widehat\cF}
    \| d_{\widehat e_{3+k}}^{1/2+\gamma} u_2 \|_{L^2(\widehat f_k)}
    \lesssim \| u \|_{L^2(\Tref)}.
  \end{align*}

  \textbf{Proof of~(\ref{item:lemma:lifting-from-triangle:bc-3}).} According to Step 1, $u_1$ vanishes in $\widehat\bv_4$.
  We will now show that $u_2$ vanishes on all edges with indices in $N$. To that end, let $j\in N$. For $(x,y,z)\in \widehat e_j$ it holds
  according to Step 2 that $c_j z = p_j(x,y,z)$, as well as $u_1(\Pi_{\widehat e_k}(x,y,z)) = u_1(\widehat \bv_4)=0$ for
  $k\neq j\in N$. Hence, for $(x,y,z)\in \widehat e_j$,
  \begin{align}\label{eq:lemma:lifting-from-triangle:bc:22}
    \begin{split}
    u_2(x,y,z) &= u_1(x,y,z) - c_j \frac{z}{p_j(x,y)} u_1\circ\Pi_{\widehat e_j}(x,y,z) +
    \sum_{\substack{k\in N\\k\neq j}} c_k \frac{z}{p_k(x,y)} u_1\circ\Pi_{\widehat e_k}(x,y,z)\\
    &= u_1(x,y,z) - u_1(x,y,z) -
    \sum_{\substack{k\in N\\k\neq j}} c_k \frac{z}{p_k(x,y)} u_1(\widehat \bv_4) = 0.
    \end{split}
  \end{align}
  Next, we will show that $\cA _{\widehat \cE} u$ vanishes on all faces in $\widehat\cF$.
  To that end, let $\widehat f_j\in\widehat\cF$. For $(x,y,z)\in \widehat f_j$ it holds
  $z = p_{\widehat f_j}(x,y)$.
  Furthermore, if $\widehat f_k\in\widehat \cF$ with $k\neq j$, then $\ell_{j,k}\in N$ for the lateral edge $\widehat e_{\ell_{j,k}}$ which is shared
  by $\what f_k$ and $\what f_j$.
  Hence, for $(x,y,z)\in\what f_j$,~\eqref{eq:ort2} implies
  $u_2(\Pi_{\widehat f_k}(x,y,z)) = u_2(\Pi_{\widehat e_{\ell_{j,k}}}(x,y,z)) = 0$,
  as we have already demonstrated that $u_2$ vanishes on all edges with indices in $N$. Hence, for $(x,y,z)\in \widehat f_j \in \widehat\cF$,
  \begin{align*}
    \cA_{\widehat \cE} u (x,y,z) &= u_2(x,y,z) - \frac{z}{p_{\widehat f_j}(x,y)} u_2(\Pi_{\widehat f_j}(x,y,z)) -
    \sum_{\substack{\widehat f_k\in \widehat \cF\\k\neq j}} \frac{z}{p_{\widehat f_k}(x,y)} u_2(\Pi_{\widehat f_k}(x,y,z))\\
    &= u_2(x,y,z) - u_2(x,y,z) -
    \sum_{\substack{\widehat f_k\in \widehat \cF\\k\neq j}} \frac{z}{p_{\widehat f_k}(x,y)} u_2(\Pi_{\widehat e_{\ell_{j,k}}}(x,y,z)) = 0.
  \end{align*}

  \textbf{Proof of~(\ref{item:lemma:lifting-from-triangle:bc-4}).} If $u$ is polynomial of degree $p$, then so is $\cA u$, and hence also
  $u_1$. Furthermore, $\cA u$, and hence $u_1$, vanish on $\widehat \cE$ if $u$ does.

  As $\Pi_{\widehat e_j}$ is affine, $p_j(x,y,z)=0$ implies $u_1(\Pi_{\widehat e_j}(x,y,z))=u_1(\widehat \bv_j) = 0$.
  Polynomial division shows that $u_2$ is indeed a polynomial of degree $p$, and due to construction it vanishes on $\widehat \cE$,
  cf.~\eqref{eq:lemma:lifting-from-triangle:bc:22}.

  Finally, note that $\Pi_{\widehat f_j}$ is affine, and $p_{\widehat f_j}(x,y)=0$ implies $(x,y,0)\in\widehat f_j$ and hence
  $u_2(\Pi_{\widehat f_j}(x,y,0)) = u_2(x,y,0)=0$. Polynomial division shows that $A_{\what \cE}u$ is indeed a polynomial of degree $p$.

  \textbf{Proof of~(\ref{item:lemma:lifting-from-triangle:bc-5}).} At first, various applications of Lemma~\ref{lemma:lifting-from-triangle} give
  for $j\in N$ and $\widehat f_k\in\widehat\cF$
  \begin{align}\label{eq:lemma:lifting-from-triangle:bc:5}
    \begin{split}
    \| d_{\Tref\times \{0\}}^{1/2-s} \nabla u_1 \|_{L^2(\tetref)}
    + \| d_{\Tref\times \{0\}}^{1/2-s} u_1 \|_{L^2(\widehat e_j)}
    + \| d_{\Tref\times \{0\}}^{3/2-s} \nabla u_1 \|_{L^2(\widehat e_j)}
    &\lesssim | u |_{H^s(\Tref)} +\| d_{\widehat\cE}^{-s}u \|_{L^2(\Tref)}\\
    \| d_{\Tref\times\{0\}}^{-s} u_1 \|_{L^2(\widehat f_k)} + \| d_{\Tref\times \{0\}}^{1-s} \nabla u_1 \|_{L^2(\widehat f_k)}
    &\lesssim | u |_{H^s(\Tref)} + \| d_{\widehat \cE}^{-s}u \|_{L^2(\Tref)}.
    \end{split}
  \end{align}
  Specifically, the terms without derivatives on $u_1$ are bounded by
  Lemma~\ref{lemma:lifting-from-triangle},~(\ref{item:lemma:lifting-from-triangle-4}) and~(\ref{item:lemma:lifting-from-triangle-5}),
  while the terms containing $\nabla u_1$ are bounded by Lemma~\ref{lemma:lifting-from-triangle},~(\ref{item:lemma:lifting-from-triangle-6}).
  Note that $\Pi_{\widehat e_j}$ is affine, so that
  \begin{align}\label{eq:lemma:lifting-from-triangle:bc:9}
    | \nabla (u_1\circ \Pi_{\widehat e_j})(x,y,z)| \sim | (\nabla u_1)\circ \Pi_{\widehat e_j}(x,y,z)|.
  \end{align}
  We conclude with~\eqref{eq:lemma:lifting-from-triangle:bc:1}
  (with $\alpha=-1,\beta=1/2-s$ as well as $\alpha=0,\beta=1/2-s$),~\eqref{eq:lemma:lifting-from-triangle:bc:5},
  and~\eqref{eq:lemma:lifting-from-triangle:bc:10},~\eqref{eq:lemma:lifting-from-triangle:bc:11}
  \begin{align}\label{eq:lemma:lifting-from-triangle:bc:6}
    \begin{split}
    \| d_{\Tref\times \{0\}}^{1/2-s} \nabla u_2 \|_{L^2(\tetref)}
    &\leq \| d_{\Tref\times \{0\}}^{1/2-s} \nabla u_1 \|_{L^2(\tetref)}\\
    &\qquad+ \sum_{j\in N}
    \left( 
      \| d_{\widehat\bv_j}^{-1}d_{\Tref\times \{0\}}^{1/2-s} u_1\circ\Pi_{\widehat e_j} \|_{L^2(\tetref)}
      + \| d_{\Tref\times \{0\}}^{1/2-s} \left( \nabla u_1 \right)\circ\Pi_{\widehat e_j} \|_{L^2(\tetref)}
    \right)\\
    &\leq \| d_{\Tref\times \{0\}}^{1/2-s} \nabla u_1 \|_{L^2(\tetref)}
    + \sum_{j\in N}
    \left( 
    \| d_{\Tref\times \{0\}}^{1/2-s} u_1 \|_{L^2(\widehat e_j)}
    + \| d_{\Tref\times \{0\}}^{3/2-s} \nabla u_1 \|_{L^2(\widehat e_j)}
    \right)\\
    &\stackrel{\eqref{eq:lemma:lifting-from-triangle:bc:5}}\lesssim |u|_{H^s(\Tref)} + \| d_{\widehat \cE}^{-s}u \|_{L^2(\Tref)}.
    \end{split}
  \end{align}
  Furthermore, for $(x,y,z)\in\tetref$ we have, cf.~\eqref{eq:lemma:lifting-from-triangle:bc:10},
    \begin{align*}
      d_{\Tref}^{-s} \frac{z}{p_j(x,y,z)} \sim d_{\Tref}^{1-s}(x,y,z)d_{\what\bv_j}^{-1}(x,y,z).
    \end{align*}
    Together with~\eqref{eq:lemma:lifting-from-triangle:bc:17},~\eqref{eq:lemma:lifting-from-triangle:bc:1}
    (with $\alpha=-1,\beta=1/2-s$), and~\eqref{eq:lemma:lifting-from-triangle:bc:5} we then conclude
  \begin{align}\label{eq:lemma:lifting-from-triangle:bc:7}
    \| d_{\Tref\times \{0\}}^{-s} u_2 \|_{L^2(\widehat f_k)}
    \leq \| d_{\Tref\times \{0\}}^{-s} u_1 \|_{L^2(\widehat f_k)}
    + \sum_{\substack{j\in N\\j\neq k}} \| d_{\Tref\times \{0\}}^{1/2-s} u_1 \|_{L^2(\widehat e_j)}
    \lesssim |u|_{H^s(\Tref)} + \| d_{\widehat\cE}^{-s}u \|_{L^2(\Tref)}.
  \end{align}
  Note that, according to Lemma~\ref{lemma:lifting-from-triangle},~\eqref{item:lemma:lifting-from-triangle-7},
  \begin{align*}
    | \nabla u_1(\widehat\bv_4) | \leq | \nabla \cA u(\widehat\bv_4) | + | \cA u(\widehat\bv_4) | \lesssim \| u \|_{L^2(\Tref)}.
  \end{align*}
  As $d_{\what\bv_k}^{-1} \lesssim 1$ on $\what f_k$, we conclude
  with~\eqref{eq:lemma:lifting-from-triangle:bc:11},~\eqref{eq:lemma:lifting-from-triangle:bc:9},~\eqref{eq:lemma:lifting-from-triangle:bc:1} (with $\alpha=-1,\beta=1/2-s$ as well as $\alpha=0,\beta=1/2-s$),~\eqref{eq:lemma:lifting-from-triangle:bc:16} (with $\beta=1-s$), and~\eqref{eq:lemma:lifting-from-triangle:bc:5}
  \begin{align}\label{eq:lemma:lifting-from-triangle:bc:8}
    \begin{split}
    \| d_{\Tref\times \{0\}}^{1-s} \nabla u_2 \|_{L^2(\widehat f_k)}
    &\lesssim
    \| d_{\Tref\times \{0\}}^{1-s} \nabla u_1 \|_{L^2(\widehat f_k)}\\
    &\qquad+ \sum_{\substack{j\in N\\j\neq k}}
    \left( 
      \| d_{\widehat \bv_j}^{-1} d_{\Tref\times \{0\}}^{1-s}u_1\circ\Pi_{\widehat e_j} \|_{L^2(\widehat f_k)}
      +\| d_{\Tref\times \{0\}}^{1-s} \left( \nabla u_1 \right)\circ \Pi_{\widehat e_j} \|_{L^2(\widehat f_k)}
    \right)\\
    &\qquad
    + \| d_{\widehat \bv_{k}}^{-1} d_{\Tref\times \{0\}}^{1-s}u_1\circ\Pi_{\widehat e_k} \|_{L^2(\widehat f_k)}
    + \| d_{\Tref\times \{0\}}^{1-s} \left( \nabla u_1 \right)\circ \Pi_{\widehat e_k} \|_{L^2(\widehat f_k)}
    \\
    &\lesssim
    \| d_{\Tref\times \{0\}}^{1-s} \nabla u_1 \|_{L^2(\widehat f_k)}\\
    &\qquad+ \sum_{\substack{j\in N\\j\neq k}}
    \left( 
      \| d_{\Tref\times \{0\}}^{1/2-s}u_1 \|_{L^2(\widehat e_j)}
      + \| d_{\Tref\times \{0\}}^{3/2-s}\nabla u_1 \|_{L^2(\widehat e_j)}
    \right) + | \nabla u_1(\widehat\bv_4)| + \underbrace{| u_1(\what \bv_4)|}_{=0}\\
    &\lesssim | u |_{H^s(\Tref)} + \| d_{\widehat\cE}^{-s}u \|_{L^2(\Tref)}.
    \end{split}
  \end{align}
  As before,
  \begin{align}\label{eq:lemma:lifting-from-triangle:bc:19}
    | \nabla (u_2\circ \Pi_{\widehat f_j})(x,y,z)| \sim | (\nabla u_2)\circ \Pi_{\widehat f_j}(x,y,z)|.
  \end{align}
  We employ~\eqref{eq:lemma:lifting-from-triangle:bc:18},~\eqref{eq:lemma:lifting-from-triangle:bc:13}~\eqref{eq:lemma:lifting-from-triangle:bc:2} (with $\alpha=-1,\beta=1/2-s$ as well as $\alpha=0,\beta=1/2-s$) and
  conclude with~\eqref{eq:lemma:lifting-from-triangle:bc:6}-\eqref{eq:lemma:lifting-from-triangle:bc:8}
  \begin{align*}
    \| d_{\Tref\times \{0\}}^{1/2-s} \nabla\cA_{\widehat\cE}u \|_{L^2(\tetref)}
    &\lesssim \| d_{\Tref\times \{0\}}^{1/2-s} \nabla u_2 \|_{L^2(\tetref)}\\
    &\qquad+\sum_{\widehat f_k\in \widehat\cF}
    \left( 
      \| d_{\widehat e_{3+k}}^{-1} d_{\Tref\times \{0\}}^{1/2-s} u_2\circ \Pi_{\widehat f_k} \|_{L^2(\tetref)}
      + \| d_{\Tref\times \{0\}}^{1/2-s} \left( \nabla u_2 \right)\circ \Pi_{\widehat f_k} \|_{L^2(\tetref)}
    \right)\\
    &\lesssim \| d_{\Tref\times \{0\}}^{1/2-s} \nabla u_2 \|_{L^2(\tetref)}
    +\sum_{\widehat f_k\in \widehat\cF}
    \left( 
    \| d_{\Tref\times \{0\}}^{-s} u_2 \|_{L^2(\widehat f_k)}
    + \| d_{\Tref\times \{0\}}^{1-s} \nabla u_2 \|_{L^2(\widehat f_k)}
    \right)\\
    &\lesssim | u |_{H^s(\Tref)} + \| d_{\widehat\cE}^{-s}u \|_{L^2(\Tref)}.
  \end{align*}

  \textbf{Proof of~(\ref{item:lemma:lifting-from-triangle:bc-6}).}
  This follows by the consecutive construction of $\cA_{\what\cE}u$ via $u_1$ and $u_2$, using orthogonal projections onto lateral edges and faces
  and Lemma~\ref{lemma:lifting-from-triangle},~(\ref{item:lemma:lifting-from-triangle-7}).

  \textbf{Proof of~(\ref{item:lemma:lifting-from-triangle:bc-7}).} First, let $\delta>\widetilde\varepsilon>0$. Using
  Lemma~\ref{lemma:lifting-from-triangle}~(\ref{item:lemma:lifting-from-triangle-7}) and~(\ref{item:lemma:lifting-from-triangle-8}), we conclude
  \begin{align}\label{eq:lemma:lifting-from-triangle:bc:21}
    \| \partial^\bk u_1 (\cdot,z) \|_{L^{\infty}( (1-z)\Tref_{\widetilde\varepsilon})}
    \lesssim \| u \|_{L^2(\Tref)} + \| u \|_{W^{\abs{\bk},\infty}(\Tref_{\delta})}
  \end{align}
  with an implied constant depending only on $\delta-\widetilde\varepsilon$.
  We proceed to consider $u_2$. Using the triangle inequality and estimate~\eqref{eq:lemma:lifting-from-triangle:bc:21},
  it suffices to consider the correction term
  \begin{align*}
    t_j (x,y,z) := \frac{z}{p_j(x,y,z)} u_1(\Pi_{\widehat e_j}(x,y,z))
  \end{align*}
  for $j\in N$ and $(x,y,z)$ with $(x,y)\in (1-z)\Tref_{\widetilde\varepsilon}$.
  With~\eqref{eq:lemma:lifting-from-triangle:bc:10},~\eqref{eq:lemma:lifting-from-triangle:bc:11},
  and~\eqref{eq:lemma:lifting-from-triangle:bc:9} we conclude
  \begin{align}\label{eq:lemma:lifting-from-triangle:bc:23}
    |t_j (x,y,z)| \lesssim |u_1(\Pi_{\widehat e_j}(x,y,z))|\qquad \text{ and }\qquad
    |\nabla t_j (x,y,z)| \lesssim \frac{|u_1(\Pi_{\widehat e_j}(x,y,z))|}{d_{\widehat\bv_j}(x,y,z)} + |(\nabla u_1)(\Pi_{\widehat e_j}(x,y,z))|.
  \end{align}
  We distinguish two cases for $j\in N$:
  \begin{itemize}
    \item $j\in\left\{ 2,3 \right\}$. In this case, note that for $(x,y,z)\in\tetref$ with $(x,y)\in (1-z)\Tref_{\widetilde\varepsilon}$
      it holds $\Pi_{\widehat e_j}(x,y,z)\in \overline{\tetref_{\varepsilon_1}}$ for some $\varepsilon_1$ depending only on $\widetilde\varepsilon$,
      as well as $d_{\widehat\bv_j}(x,y,z)\gtrsim 1$. We conclude with~\eqref{eq:lemma:lifting-from-triangle:bc:23}
      \begin{align*}
	|\partial^{\bk} t_j (x,y,z)| \lesssim \| u_1 \|_{W^{\abs{\bk},\infty}(\tetref_{\varepsilon_1})},
      \end{align*}
      with a constant depending only on $\widetilde\varepsilon$.
      To bound further the right-hand side, we may use point~(\ref{item:lemma:lifting-from-triangle:bc-6}) of the present lemma in the
      particular case of $\what\cE=\emptyset$ as in this case $u_1=\cA_{\emptyset}u$ so that
      \begin{align*}
	\| u_1 \|_{W^{\abs{\bk},\infty}(\tetref_{\varepsilon_1})} = 
	\| \cA_{\emptyset} u \|_{W^{\abs{\bk},\infty}(\tetref_{\varepsilon_1})} \lesssim \| u \|_{L^2(\Tref)}.
    \end{align*}
    \item $j=1$.
      For $\abs{\bk}=0$, we use the first estimate of~\eqref{eq:lemma:lifting-from-triangle:bc:23} and~\eqref{eq:lemma:lifting-from-triangle:bc:21},
      as for $(x,y,z)\in\tetref$ with $(x,y)\in (1-z)\Tref_{\widetilde\varepsilon}$ it holds
      \begin{align*}
	|t_1(x,y,z)|\lesssim \| u_1 \|_{L^\infty(\widehat e_1)} \leq \sup_{z\in[0,1]} \| u_1(\cdot,z) \|_{L^\infty( (1-z)\Tref_{\widetilde\varepsilon})}
	\lesssim \| u \|_{L^2(\Tref)} + \| u \|_{L^\infty(\Tref_{\delta})}.
      \end{align*}
      If $\abs{\bk}=1$, we note that our assumptions yield $u(\widehat\bv_1)=0$, and Poincar\'e's inequality shows
      \begin{align*}
	|u_1(\Pi_{\widehat e_1}(x,y,z))| \lesssim d_{\widehat\bv_1}(\Pi_{\widehat e_1}(x,y,z)) \| \nabla u_1 \|_{L^{\infty}(\widehat e_1)}
      \end{align*}
      as well as $d_{\widehat\bv_1}(\Pi_{\widehat e_1}(x,y,z))\leq d_{\widehat \bv_1}(x,y,z)$. Finally,
      we conclude with the second estimate of~\eqref{eq:lemma:lifting-from-triangle:bc:23} for
      $(x,y,z)\in\tetref$ with $(x,y)\in (1-z)\Tref_{\widetilde\varepsilon}$
      and~\eqref{eq:lemma:lifting-from-triangle:bc:21}
      \begin{align*}
	|\partial^{\bk} t_j (x,y,z)| \lesssim \| \nabla u_1 \|_{L^\infty(\widehat e_1)}
	\leq \sup_{z\in[0,1]} \| \nabla u_1(\cdot,z) \|_{L^\infty( (1-z)\Tref_{\widetilde\varepsilon})}
	\lesssim \| u \|_{L^2(\Tref)} + \| u \|_{W^{1,\infty}(\Tref_{\delta})}.
    \end{align*}
  \end{itemize}
  We arrive at
  \begin{align}\label{eq:lemma:lifting-from-triangle:bc:20} 
    \| \partial^\bk u_2 (\cdot,z) \|_{L^{\infty}( (1-z)\Tref_{\widetilde\varepsilon})}
    \lesssim \| u \|_{L^2(\Tref)} + \| u \|_{W^{\abs{\bk},\infty}(\Tref_{\delta})}
  \end{align}
  for any $\widetilde\varepsilon$ and $\delta>\widetilde\varepsilon$, with a constant depending on $\widetilde\varepsilon$ as well as
  $\delta-\widetilde\varepsilon$.
  Finally, let $\varepsilon>0$.
  For $\widehat f_k\in\widehat\cF$ consider the correction term 
  \begin{align*}
    r_k(x,y,z) := \frac{z}{p_{\widehat f_k}(x,y)} u_2(\Pi_{\widehat f_k}(x,y,z))
  \end{align*}
  for $(x,y,z)$ with $(x,y)\in (1-z)\Tref_{\varepsilon}$.
  With~\eqref{eq:lemma:lifting-from-triangle:bc:18},~\eqref{eq:lemma:lifting-from-triangle:bc:13}, and~\eqref{eq:lemma:lifting-from-triangle:bc:19} we conclude that
  \begin{align*}
    |r_k(x,y,z)| \lesssim |u_2(\Pi_{\widehat f_k}(x,y,z))|\quad\text{ and }\qquad
    |\nabla r_k(x,y,z)| \lesssim \frac{|u_2(\Pi_{\widehat f_k}(x,y,z))|}{d_{\widehat e_{k+3}}(x,y,z)} + |(\nabla u_2)(\Pi_{\widehat f_k}(x,y,z))|.
  \end{align*}
  Again we distinguish two cases.
  \begin{itemize}
    \item $k=1$. In this case, note that for $(x,y,z)\in\tetref$ with $(x,y)\in (1-z)\Tref_{\varepsilon}$
      it holds $\Pi_{\widehat f_k}(x,y,z)\in \overline{\tetref_{\varepsilon_2}}$ for some $\varepsilon_2$ depending only on $\varepsilon$,
      as well as as well as $d_{\widehat e_{k+3}}(x,y,z)\gtrsim 1$. Hence, using again point~(\ref{item:lemma:lifting-from-triangle:bc-6})
      of the present lemma, we conclude
      \begin{align*}
	|\partial^{\bk} r_k(x,y,z)| \lesssim \| u_2 \|_{W^{\abs{\bk},\infty}(\tetref_{\varepsilon_2})} \lesssim \| u \|_{L^2(\Tref)},
      \end{align*}
      with a constant depending only on $\varepsilon$.
    \item $k\in\left\{ 2,3 \right\}$. In this case, note that for $\varepsilon$ sufficiently small there is some $\widetilde\varepsilon>\varepsilon$
      such that for $(x,y,z)\in \tetref$ with $(x,y)\in (1-z)\Tref_\varepsilon$ it holds
      for $ (\wilde x,\wilde y,\wilde z) = \Pi_{\widehat f_k}(x,y,z)$ that $(\wilde x,\wilde y)\in \overline{(1- \wilde z)\Tref_{\widetilde \varepsilon}}$.
      For $\abs{\bk}=1$, given that $u_2$ vanishes on $\widehat e_k$,
      Poincar\'e's inequality shows
      \begin{align*}
	|u_2(\Pi_{\widehat f_k}(x,y,z))| \lesssim d_{\widehat e_{k+3}}(\Pi_{\widehat f_k}(x,y,z))
	\max_{\wilde z\in[0,1]} \| \nabla u_2 \|_{L^\infty( \widehat f_k\cap\overline{(1-\wilde z)\Tref_{\widetilde\varepsilon}})}
      \end{align*}
      as well as $d_{\widehat e_{k+3}}(\Pi_{\widehat f_k}(x,y,z)) \leq d_{\widehat e_{k+3}}(x,y,z)$. Finally,
      for $\abs{\bk}\in\left\{ 0,1 \right\}$, we conclude with~\eqref{eq:lemma:lifting-from-triangle:bc:20}
      \begin{align*}
	|\partial^{\bk} r_k(x,y,z)| \lesssim 
	\max_{\wilde z\in[0,1]} \| \nabla^{\abs{\bk}} u_2 \|_{L^{\infty}( \widehat f_k\cap\overline{(1-\wilde z)\Tref_{\widetilde\varepsilon}})}
	\lesssim \| u \|_{L^2(\Tref)} + \| u \|_{W^{\abs{\bk},\infty}(\Tref_{\delta})}.
      \end{align*}
    \end{itemize}
  Together with triangle inequality, this shows the stipulated estimate.
\end{proof}
We define a reference prism $\Pref := \Tref\times(0,1)$, where $\Tref$ is the reference triangle.
\begin{lemma}\label{lemma:lifting-to-prism}
  Let $\what\cE \subset \{ \widehat e_{4}, \widehat e_5, \widehat e_6 \}$.
  There exists a linear operator $\cA^{\Pref}_{\widehat \cE}: L^1_{loc}(\Tref)\rightarrow C^{\infty}(\Pref)$ with the following properties:
  \begin{enumerate}[(i)]
    \item \label{item:lemma:lifting-to-prism:1} If $u$ is continuous at a point $\bx\in\Tref$, then $(\cA^{\Pref}_{\what\cE} u)(\bx,0)=u(\bx)$.
    \item \label{item:lemma:lifting-to-prism:2} The operator $\cA^{\Pref}_{\what\cE}:L^2(\Tref)\rightarrow L^2(\Pref)$ is bounded.
    \item \label{item:lemma:lifting-to-prism:3} If $u$ vanishes on an edge $\what e\in\widehat\cE$, then
      $\cA^{\Pref}_{\what\cE} u$ vanishes on the face $\what e \times (0,1)$.
    \item \label{item:lemma:lifting-to-prism:7} If $u$ is a polynomial of degree $p\geq|\what\cE|$ that vanishes
      on all edges in $\what\cE$, then $\cA^{\Pref}_{\what\cE}(\cdot,\cdot,z)$ is a polynomial of degree $p$ for fixed $z$.
    \item \label{item:lemma:lifting-to-prism:4} $\cA^{\Pref}_{\what\cE}u$ vanishes on the top face $\Tref\times {1}$.
    \item \label{item:lemma:lifting-to-prism:5} 
      For every $\theta\in(0,1)$ there is a constant $C_\theta>0$ such that
      \begin{align*}
	\| d_{\Tref\times \{0\}}^{1/2-\theta} \nabla \cA^{\Pref}_{\widehat\cE}u \|_{L^2(\Pref)}
	\leq C_\theta \left( |u|_{H^\theta(\Tref)} + \| d_{\widehat \cE}^{-\theta}u \|_{L^2(\Tref)}  \right).
      \end{align*}
    \item \label{item:lemma:lifting-to-prism:6}
      For $\varepsilon>0$ sufficiently small there is a $\delta>0$ with $\delta>\varepsilon$ such that
      for {$\bk\in\N_0^3$ with $\abs{\bk}\leq 1$ there holds for any $u$ vanishing on $\widehat\cE$ that}
      \begin{align*}
	\| \partial^\bk \cA^{\Pref}_{\widehat\cE} u (\cdot,z) \|_{L^{\infty}( \Tref_\varepsilon)}
	\lesssim \| u \|_{L^2(\Tref)} + \| u \|_{W^{\abs{\bk},\infty}(\Tref_\delta)} \qquad\text{ for all }z \in [0,1],
      \end{align*}
      with implied constant depending only on $\varepsilon$.
  \end{enumerate}
\end{lemma}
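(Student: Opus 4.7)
The plan is to define $\cA^{\Pref}_{\what\cE}$ as the pull-back of $\cA_{\what\cE}$ from Lemma~\ref{lemma:lifting-from-triangle:bc} by a 3D Duffy-type transformation $T_{3D}:\Pref\to\tetref$ that collapses the top face $\Tref\times\{1\}$ of the prism onto the apex $\what\bv_4=(0,0,1)$ of the tetrahedron. Explicitly,
\begin{align*}
  T_{3D}(x,y,z):=((1-z)x,(1-z)y,z),\qquad \cA^{\Pref}_{\what\cE}u(x,y,z):=(\cA_{\what\cE}u)\bigl(T_{3D}(x,y,z)\bigr) - z\cdot(\cA_{\what\cE}u)(\what\bv_4).
\end{align*}
The correction $-z\cdot(\cA_{\what\cE}u)(\what\bv_4)$ is needed only when $\what\cE=\emptyset$; for nonempty $\what\cE$ it vanishes identically since $\what\bv_4$ lies on some face in $\what\cF$, on which $\cA_{\what\cE}u$ is zero by Lemma~\ref{lemma:lifting-from-triangle:bc}(\ref{item:lemma:lifting-from-triangle:bc-3}). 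The Jacobian of $T_{3D}$ equals $(1-z)^2$.

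The qualitative properties (\ref{item:lemma:lifting-to-prism:1}), (\ref{item:lemma:lifting-to-prism:3}), (\ref{item:lemma:lifting-to-prism:4}) and (\ref{item:lemma:lifting-to-prism:7}) follow directly from the corresponding statements for $\cA_{\what\cE}$: the map $T_{3D}$ reduces to the identity at $z=0$, sends the lateral face $\what e\times(0,1)$ of $\Pref$ into the lateral face of $\tetref$ associated to $\what e$, sends $\Tref\times\{1\}$ to $\what\bv_4$, and is affine in $(x,y)$ for fixed $z$, so it preserves polynomial degree in $(x,y)$. For the quantitative estimates (\ref{item:lemma:lifting-to-prism:2}) and (\ref{item:lemma:lifting-to-prism:5}), the change of variables $(x',y',z)=T_{3D}(x,y,z)$ produces a singular Jacobian factor $(1-z)^{-2}$ when pulling integrals over $\Pref$ back to $\tetref$. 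The strategy is to split each integral into the regions $\{z\le 1/2\}$ and $\{z>1/2\}$: on the former the factor is bounded and Lemma~\ref{lemma:lifting-from-triangle:bc}(\ref{item:lemma:lifting-from-triangle:bc-2}), respectively (\ref{item:lemma:lifting-from-triangle:bc-5}), applies directly; on the latter Lemma~\ref{lemma:lifting-from-triangle:bc}(\ref{item:lemma:lifting-from-triangle:bc-6}) provides a uniform pointwise bound and the $(1-z)^2$ area of the tetrahedral cross-section cancels the singular Jacobian. For the gradient in~(\ref{item:lemma:lifting-to-prism:5}), the chain rule yields $|\nabla\cA^{\Pref}_{\what\cE}u|\lesssim|(\nabla\cA_{\what\cE}u)\circ T_{3D}|$ since $x,y$ are bounded on $\Tref$ and $1-z\le 1$.

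The step I expect to be most delicate is~(\ref{item:lemma:lifting-to-prism:6}). At fixed $z$, the set $\Tref_\varepsilon\subset\Tref$ is mapped by $T_{3D}$ into $(1-z)\Tref_\varepsilon\subset\tetref$, which is precisely the set appearing in Lemma~\ref{lemma:lifting-from-triangle:bc}(\ref{item:lemma:lifting-from-triangle:bc-7}). Hence~(\ref{item:lemma:lifting-to-prism:6}) reduces to that result via the chain rule, after verifying that the coefficients introduced by differentiating $T_{3D}$ (which are $1-z$, $-x$, $-y$ and $1$) are uniformly bounded on $\Pref$. The correction term is always controlled by $\|u\|_{L^2(\Tref)}$ since Lemma~\ref{lemma:lifting-from-triangle:bc}(\ref{item:lemma:lifting-from-triangle:bc-6}) applied in any $\tetref_\varepsilon$ containing $\what\bv_4$ yields $|(\cA_{\what\cE}u)(\what\bv_4)|\lesssim\|u\|_{L^2(\Tref)}$, so it does not disturb any of the preceding bounds.
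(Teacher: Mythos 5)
Your proposal is correct and follows essentially the same route as the paper: both define the operator by pulling back $\cA_{\what\cE}$ from $\tetref$ to $\Pref$ via the $3$D Duffy map and absorb the singular Jacobian factor $(1-z)^{-2}$ by splitting off a neighbourhood of the apex, where the interior sup-norm bounds of Lemma~\ref{lemma:lifting-from-triangle:bc}~(\ref{item:lemma:lifting-from-triangle:bc-6}) take over. The only (harmless) difference is how vanishing on $\Tref\times\{1\}$ is enforced: the paper multiplies by $(1-z)$, which incidentally cancels the Jacobian in the gradient term of~(\ref{item:lemma:lifting-to-prism:5}) at the price of an extra zeroth-order term in $\partial_z$, whereas you subtract $z\,(\cA_{\what\cE}u)(\what\bv_4)$ (identically zero for $\what\cE\neq\emptyset$) and instead invoke the apex splitting once more for the gradient estimate.
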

\begin{proof}
  Denote by
  \begin{align*}
    T_{\cD}^{3D}: \Pref \rightarrow \tetref,\quad
    \begin{pmatrix}
      x\\y\\z
    \end{pmatrix} \rightarrow
    \begin{pmatrix}
      x(1-z)\\y(1-z)\\z
    \end{pmatrix}
  \end{align*}
  the Duffy transform and note $|\det d T_{\cD}^{3D}| = (1-z)^2$.
  Let $\cA_{\what\cE}$ be the operator of Lemma~\ref{lemma:lifting-from-triangle:bc} and define
  \begin{align*}
    (\cA^{\Pref}_{\widehat \cE} u)(x,y,z) := (1-z)(A_{\what\cE} u)\circ T_{\cD}^{3D}(x,y,z).
  \end{align*}
  \textbf{Proof of~(\ref{item:lemma:lifting-to-prism:1}).} This follows immediately from
  Lemma~\ref{lemma:lifting-from-triangle:bc} (\ref{item:lemma:lifting-from-triangle:bc-1}).

  \textbf{Proof of~(\ref{item:lemma:lifting-to-prism:2}).}
  This follows from Lemma~\ref{lemma:lifting-from-triangle:bc} (\ref{item:lemma:lifting-from-triangle:bc-2}) by substitution.

  \textbf{Proof of~(\ref{item:lemma:lifting-to-prism:3}).}
  This follows from Lemma~\ref{lemma:lifting-from-triangle:bc} (\ref{item:lemma:lifting-from-triangle:bc-3})
  and the fact that the Duffy transform maps the corresponding face of $\tetref$ where $\cA_{\what \cE}u$ vanishes to $\what e\times (0,1)$.

  \textbf{Proof of~(\ref{item:lemma:lifting-to-prism:7}).}
  This follows from Lemma~\ref{lemma:lifting-from-triangle:bc} (\ref{item:lemma:lifting-from-triangle:bc-4}) and the definition of $T_{\cD}^{3D}$.

  \textbf{Proof of~(\ref{item:lemma:lifting-to-prism:4}).}
  This follows by construction.

  \textbf{Proof of~(\ref{item:lemma:lifting-to-prism:5}).}
  By the product and chain rule,
  \begin{align}\label{eq:lemma:lifting-to-prism:1}
    \nabla \cA^{\Pref}_{\widehat\cE}u = 
    \begin{pmatrix}
      0\\0\\-\cA_{\what\cE}u\circ T_{\cD}^{3D}
    \end{pmatrix}
    + (1-z)\bigl(dT_{\cD}^{3D}\bigr)^\top \bigl( \nabla \cA_{\what\cE}u \bigr)\circ T_{\cD}^{3D}.
  \end{align}
  To bound the first term, we choose some $\varepsilon>0$ and calculate
  \begin{align*}
    \int_{\Pref}& z^{1-2\theta}|\cA_{\what\cE}u\circ T_{\cD}^{3D}(x,y,z)|^2\,dxdydz
    = \int_{\tetref} z^{1-2\theta}|\cA_{\what\cE}u(x,y,z)|^2\frac{1}{(1-z)^2}\,dxdydz\\
    &\lesssim \int_{\tetref\setminus\tetref_\varepsilon} z^{1-2\theta}|\cA_{\what\cE}u(x,y,z)|^2\,dxdydz +
    \| \cA_{\what\cE}u \|_{L^\infty(T^{3D}_\varepsilon)}^2
    \int_{\tetref_\varepsilon} \frac{1}{(1-z)^2}\,dxdydz\\
    &\lesssim \| u \|_{L^2(\Tref)}^2,
  \end{align*}
  where the last estimate follows from Lemma~\ref{lemma:lifting-from-triangle:bc} (\ref{item:lemma:lifting-from-triangle:bc-2})
  and (\ref{item:lemma:lifting-from-triangle:bc-6}). To bound the second term in~\eqref{eq:lemma:lifting-to-prism:1}, we use
  $\| dT_{\cD}^{3D} \|_2 \lesssim 1$ and substitution
  \begin{align*}
    \int_{\Pref} z^{1-2\theta} (1-z)^2|\bigl(\nabla \cA_{\what\cE}u\bigr)\circ T_{\cD}^{3D}(x,y,z)|^2\,dxdydz
    &\lesssim \int_{\tetref} z^{1-2\theta} |\nabla \cA_{\what\cE}u (x,y,z)|^2\,dxdydz\\
    &\lesssim |u|_{H^\theta(\Tref)}^2 + \| d_{\widehat \cE}^{-\theta}u \|_{L^2(\Tref)}^2,
  \end{align*}
  where the last estimate follows from Lemma~\ref{lemma:lifting-from-triangle:bc} (\ref{item:lemma:lifting-from-triangle:bc-5}).

  \textbf{Proof of~(\ref{item:lemma:lifting-to-prism:6}).}
  For $\abs{\bk}=0$, this follows immediately from Lemma~\ref{lemma:lifting-from-triangle:bc}~(\ref{item:lemma:lifting-from-triangle:bc-7}),
  taking into account that
  \begin{align*}
    \| \cA^{\Pref}_{\widehat\cE} u (\cdot,z) \|_{L^{\infty}( \Tref_\varepsilon)} =
    (1-z)\| \cA_{\widehat\cE} u (\cdot,z) \|_{L^{\infty}( (1-z)\Tref_\varepsilon)}.
  \end{align*}
  For $\abs{\bk}=1$, we additionally use the formula~\eqref{eq:lemma:lifting-to-prism:1}.
\end{proof}
\subsection{Liftings for decomposed FEM spaces}\label{sec:lifting:dcomp}
\begin{lemma}\label{lemma:lift:elpart}
  Let $K\in\cT$ be an element and $u_K \in\wilde\cS^{\bp,1}(\cT|_K)$. Then there exists a function
  $v:[0,\infty)\rightarrow \wilde\cS^{\bp,1}(\cT|_K)$ such that $v(0) = u_K$ and, for $\theta \in (0,1)$,
  \begin{align*}
    \int_{0}^\infty t^{2(1-\theta)} \left( \| \nabla v(t) \|_{L^2(K)}^2 + \| v'(t) \|_{L^2(K)}^2 \right) \frac{dt}{t}
    \lesssim | u_K |_{H^\theta(K)}^2 + \| d_{\partial K}^{-\theta} u_K \|_{L^2(K)}^2.
  \end{align*}
\end{lemma}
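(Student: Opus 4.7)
The strategy is to work on the reference element and then transport back to $K$. Because the element bubble $u_K$ vanishes on $\partial K$, pulling it back along $F_K$ yields a polynomial on the reference element that vanishes on the \emph{entire} reference boundary, which is what allows us to apply an already-prepared lifting machinery. The cases in which $K$ is a triangle and in which $K$ is a quadrilateral are handled with different tools.

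If $\what K = \Tref$, set $\what u := u_K\circ F_K \in \wilde\cP^{p_K}$, which vanishes on $\what\cE := \{\what e_4,\what e_5,\what e_6\}$. Apply the prism lifting from Lemma~\ref{lemma:lifting-to-prism} with this maximal $\what\cE$ to obtain $\what v := \cA^{\Pref}_{\what\cE}\what u \in C^\infty(\Pref)$. By items (\ref{item:lemma:lifting-to-prism:1}), (\ref{item:lemma:lifting-to-prism:3}), (\ref{item:lemma:lifting-to-prism:7}), and (\ref{item:lemma:lifting-to-prism:4}) of that lemma, $\what v(\cdot,0)=\what u$, $\what v(\cdot,z)\in\wilde\cP^{p_K}$ for every $z$, and $\what v(\cdot,1)=0$. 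Define
\[
  v(t)(y) := \begin{cases} \what v\!\left(F_K^{-1}(y),\,t/h_K\right), & 0\le t\le h_K,\\ 0, & t>h_K, \end{cases}
\]
which indeed lies in $\wilde\cS^{\bp,1}(\cT|_K)$ for each $t$ and satisfies $v(0)=u_K$; continuity at $t=h_K$ is ensured by $\what v(\cdot,1)=0$. Item (\ref{item:lemma:lifting-to-prism:5}) of Lemma~\ref{lemma:lifting-to-prism}, together with $d_{\Tref\times\{0\}}(x,y,z)=z$, yields the key weighted $H^1$-bound on the prism; substituting $z=t/h_K$ and using the standard scaling $\|\nabla v(t)\|_{L^2(K)}^2\sim \|\nabla\what v(\cdot,t/h_K)\|_{L^2(\Tref)}^2$ together with $\|v'(t)\|_{L^2(K)}^2\sim h_K^{-2}\|\partial_z\what v(\cdot,t/h_K)\|_{L^2(K)}^2$ produces a factor $h_K^{2-2\theta}$, which is precisely the factor relating $|\what u|_{H^\theta(\Tref)}^2+\|d_{\partial\Tref}^{-\theta}\what u\|_{L^2(\Tref)}^2$ to $|u_K|_{H^\theta(K)}^2+\|d_{\partial K}^{-\theta}u_K\|_{L^2(K)}^2$.

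If $\what K = \Sref$, the prism operator is not directly available, so I proceed abstractly. Let $\what u := u_K\circ F_K \in \wilde\cQ^{p_K}$, which vanishes on $\partial\Sref$. Proposition~\ref{prop:GL}(\ref{item:prop:GL-i}) gives the discrete/continuous interpolation equivalence $\|\what u\|_{[(\wilde\cQ^{p_K},\|\cdot\|_{L^2}),(\wilde\cQ^{p_K},\|\cdot\|_{\wilde H^1})]_\theta} \sim \|\what u\|_{[L^2(\Sref),\wilde H^1(\Sref)]_\theta}$ with constants independent of $p_K$. I then invoke Lemma~\ref{lemma:k-trace-space}(\ref{item:lemma:k-trace-space-i}) on the pair of Banach spaces $(\wilde\cQ^{p_K},\|\cdot\|_{L^2(\Sref)})$ and $(\wilde\cQ^{p_K},\|\cdot\|_{\wilde H^1(\Sref)})$ with the $H^1$-seminorm playing the role of $|\cdot|_1$, obtaining $\what v:(0,\infty)\to \wilde\cQ^{p_K}$ with $\what v(0)=\what u$ and
\[
  \int_0^\infty s^{2(1-\theta)}\bigl(|\what v(s)|_{H^1(\Sref)}^2+\|\what v'(s)\|_{L^2(\Sref)}^2\bigr)\frac{ds}{s} \lesssim |\what u|_\theta^2.
\]
Combining Proposition~\ref{lemma:K-vs-k} with Lemma~\ref{lem:equiv:lm:inter}(ii) converts the right-hand side into $|\what u|_{H^\theta(\Sref)}^2+\|d_{\partial\Sref}^{-\theta}\what u\|_{L^2(\Sref)}^2$. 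I then set $v(t)(y) := \what v(t/h_K)\circ F_K^{-1}(y)$ and conclude by exactly the same scaling computation as in the triangle case.

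The main obstacle in both cases is not conceptual but bookkeeping: ensuring that the lifted function actually remains in $\wilde\cS^{\bp,1}(\cT|_K)$ for every $t$ (so that continuity at $t=0$ holds in the right norm, and $v(t)$ vanishes on $\partial K$), and that all scaling factors from $F_K$ — for the $L^2$-mass, the gradient, the $H^\theta$-seminorm, and the weighted $L^2$-norm $\|d_{\partial K}^{-\theta}\cdot\|$ — combine to produce exactly the required $h_K^{2-2\theta}$. In the quadrilateral case the extra subtlety is that Lemma~\ref{lemma:k-trace-space} only controls the $K$-seminorm, so the detour through Proposition~\ref{lemma:K-vs-k} and Lemma~\ref{lem:equiv:lm:inter}(ii) is necessary to arrive at the double-integral/weighted-$L^2$ quantities that appear on the right-hand side of the claim.
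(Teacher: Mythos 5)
Your proposal is correct and follows essentially the same route as the paper's proof: the triangle case via the prism lifting $\cA^{\Pref}_{\partial\Tref}$ of Lemma~\ref{lemma:lifting-to-prism} (using items (\ref{item:lemma:lifting-to-prism:1}), (\ref{item:lemma:lifting-to-prism:3}), (\ref{item:lemma:lifting-to-prism:7}), (\ref{item:lemma:lifting-to-prism:4}), (\ref{item:lemma:lifting-to-prism:5})), the quadrilateral case via Lemma~\ref{lemma:k-trace-space}(\ref{item:lemma:k-trace-space-i}) on $(\wilde\cQ^{p_K},\|\cdot\|_{L^2})$, $(\wilde\cQ^{p_K},\|\cdot\|_{\wilde H^1})$ combined with Proposition~\ref{prop:GL}(\ref{item:prop:GL-i}) and Lemma~\ref{lem:equiv:lm:inter}(ii), and finally the rescaling $v(t)=\what v(t/h_K)\circ F_K^{-1}$ with the matching factors $h_K^{2-2\theta}$ on both sides. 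The only cosmetic differences are the order of the two cases and that the paper bounds the seminorm by the norm directly from $k\le K$ rather than citing Proposition~\ref{lemma:K-vs-k}.
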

\begin{proof}
  Set $\what u=u_K\circ F_K$ and $p = p_K$.
  First, suppose that $K$ is a square, i.e., $\what K = \Sref$, such that $\what u \in \wilde\cQ^p$.
  We apply Lemma~\ref{lemma:k-trace-space} (\ref{item:lemma:k-trace-space-i}) with
  $X_0 = (\wilde\cQ^p,\|\cdot\|_{L^2(\Sref)})$ and $X_1 = (\wilde\cQ^p,\|\cdot\|_{\wilde H^1(\Sref)})$.
  Accordingly, there exists a function $\what v: [0,\infty]\rightarrow \wilde\cQ^p$ such that
  \begin{align}\label{lemma:lift:elpart:1}
    \begin{split}
    \int_{0}^\infty t^{2(1-\theta)} \left( \| \nabla \what v(t) \|_{L^2(\Sref)}^2 + \| \what v'(t) \|_{L^2(\Sref)}^2 \right) \frac{dt}{t}
    &\lesssim | \what u |_{[X_0,X_1]_\theta}^2 \leq \| \what u \|_{[X_0,X_1]_\theta}^2\\
    &\lesssim \| \what u \|_{[L^2(\Sref),\wilde H^1(\Sref)]_\theta}^2\\
    &\lesssim | \what u |_{H^\theta(\Sref)}^2 + \| d_{\partial \Sref}^{-\theta} \what u \|_{L^2(\Sref)}^2.
    \end{split}
  \end{align}
  where the penultimate estimate follows from Proposition~\ref{prop:GL} (\ref{item:prop:GL-i}),
  and the last estimate from Lemma~\ref{lem:equiv:lm:inter}, $\rm(ii)$.
  If $K$ is a triangle, i.e., $\Kref = \Tref$, then in particular $p\geq 3$,
  and we use Lemma~\ref{lemma:lifting-to-prism} to define $\what v := \cA^{\Pref}_{\partial\Tref} \what u$
  on $\Pref$. Note that $\what v$ vanishes at $t=1$, and we extend it by zero on $\Tref\times[1,\infty)$.
  Due to Lemma~\ref{lemma:lifting-to-prism}~\eqref{item:lemma:lifting-to-prism:3}
  and~\eqref{item:lemma:lifting-to-prism:7} it holds $\what v:[0,\infty]\rightarrow \wilde\cP^p$.
  Then, due to Lemma~\ref{lemma:lifting-to-prism}~\eqref{item:lemma:lifting-to-prism:5},
  \begin{align}\label{lemma:lift:elpart:2}
    \begin{split}
    \int_{0}^\infty t^{2(1-\theta)} \left( \| \nabla \what v(t) \|_{L^2(\Tref)}^2 + \| \what v'(t) \|_{L^2(\Tref)}^2 \right) \frac{dt}{t}
    &= \| d_{\Tref\times\{0\}}^{1/2-\theta} \nabla\cA^{\Pref}_{\partial\Tref}\what u \|_{L^2(\Pref)}^2\\
    &\lesssim |\what u|_{H^\theta(\Tref)}^2 + \| d_{\partial\Tref}^{-\theta}\what u \|_{L^2(\Tref)}^2.
    \end{split}
  \end{align}
  Independently of the shape of $K$ we define $v(t) := \what v(t/h_K)\circ F_K^{-1}$,
  such that $v:[0,\infty)\rightarrow \wilde\cS^{\bp,1}(\cT|_K)$.
  Scaling arguments transform
  the left hand sides of~\eqref{lemma:lift:elpart:1} and~\eqref{lemma:lift:elpart:2} into
  \begin{align*}
    h_K^{2\theta-2}\int_{0}^\infty t^{2(1-\theta)} \left( \| \nabla v(t) \|_{L^2(K)}^2 + \| v'(t) \|_{L^2(K)}^2 \right) \frac{dt}{t},
  \end{align*}
  while the right-hand sides transform into
  \begin{align*}
    h^{2\theta-2}\left( | u_K |_{H^\theta(K)}^2 + \| d_{\partial K}^{-\theta} u_K \|_{L^2(K)}^2\right).
  \end{align*}
  The stated estimate follows.
\end{proof}
\begin{lemma}\label{lemma:lift:patches}
  Let $\omega$ be a vertex or edge patch in $\cT$. If $\omega$ is a vertex patch, set $\widehat\cE=\left\{ \what e_6 \right\}$.
  If $\omega$ is an edge patch, set $\what\cE=\left\{ \what e_5,\what e_6 \right\}$. With the notation introduced in Lemma~\ref{lem:kmr:decomp:2},
  let $\wilde u\in X_V$ or $\wilde u\in X_e$.
  Then there exists a function
  $v:[0,\infty)\rightarrow \wilde\cS^{\bp,1}(\cT|_\omega)$ such that $v(0) = T_\omega \wilde u$ and, for $\theta \in (0,1)$,
  \begin{align*}
    h_\omega^{-2+2\theta}\int_{0}^\infty t^{2(1-\theta)} \left( \| \nabla v(t) \|_{L^2(\omega)}^2 + \| v'(t) \|_{L^2(\omega)}^2 \right) \frac{dt}{t}
    \lesssim |\wilde u|_{H^\theta(\Tref)}^2 + \| d_{\widehat \cE}^{-\theta} \wilde u \|_{L^2(\Tref)}^2
    + \| \wilde u \|_{W^{1,\infty}(\Tref_\delta)}^2,
  \end{align*}
  where $\delta>0$ is chosen according to Lemma~\ref{lemma:lifting-to-prism} (vii).
\end{lemma}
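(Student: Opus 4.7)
The plan is to first lift $\wilde u$ to a function on the reference prism $\Pref$ via the operator $\cA^{\Pref}_{\what\cE}$ from Lemma~\ref{lemma:lifting-to-prism}, and then apply the patch-wise pushforward $T_\omega$ from Lemma~\ref{lem:kmr:decomp:2} slice-by-slice, with the time variable $t$ rescaled by $h_\omega$ so that the scaling matches on the physical patch. Concretely, I set $\what v(s) := \cA^{\Pref}_{\what\cE}\wilde u(\cdot, s)$ for $s\in[0,1]$ and extend by zero for $s>1$ (which is consistent in view of Lemma~\ref{lemma:lifting-to-prism}~(\ref{item:lemma:lifting-to-prism:4})), then define
\begin{align*}
  v(t) := T_\omega \what v(t/h_\omega), \qquad t \geq 0.
\end{align*}
Parts~(\ref{item:lemma:lifting-to-prism:3}) and~(\ref{item:lemma:lifting-to-prism:7}) of Lemma~\ref{lemma:lifting-to-prism} guarantee that $\what v(s)$ is a polynomial of the right degree vanishing on $\what\cE$; in the vertex-patch case the reflective symmetry of $\wilde u$ about the line from $\what\bv_3$ to the origin is preserved by $\cA^{\Pref}_{\what\cE}$, provided the mollifier in Lemma~\ref{lemma:lifting-from-triangle} is chosen symmetric under the corresponding reflection. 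Hence $\what v(s)\in X_\omega$ for every $s$, so $v(t)\in\wilde\cS^{\bp,1}(\cT|_\omega)$, and $v(0)=T_\omega\wilde u$ by Lemma~\ref{lemma:lifting-to-prism}~(\ref{item:lemma:lifting-to-prism:1}).

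For the norm estimates, linearity of $T_\omega$ gives $v'(t) = h_\omega^{-1}T_\omega\what v'(t/h_\omega)$, and Lemma~\ref{lem:kmr:decomp:2}~(ii) yields
\begin{align*}
  \|\nabla v(t)\|_{L^2(\omega)}^2 + \|v'(t)\|_{L^2(\omega)}^2
  \lesssim \|\nabla\what v(t/h_\omega)\|_{L^2(\Tref)}^2 + \|\what v'(t/h_\omega)\|_{L^2(\Tref)}^2
  + \|\nabla\what v(t/h_\omega)\|_{L^\infty(\Tref_\varepsilon)}^2 + \|\what v'(t/h_\omega)\|_{L^\infty(\Tref_\varepsilon)}^2.
\end{align*}
Substituting $t = h_\omega s$ and multiplying by $t^{2(1-\theta)}/t$ produces the desired overall factor $h_\omega^{2(1-\theta)}$; using $d_{\Tref\times\{0\}}(\cdot,s) = s$ on $\Pref$, the $L^2$-contributions reduce exactly to $h_\omega^{2(1-\theta)}\|d_{\Tref\times\{0\}}^{1/2-\theta}\nabla\cA^{\Pref}_{\what\cE}\wilde u\|_{L^2(\Pref)}^2$, which Lemma~\ref{lemma:lifting-to-prism}~(\ref{item:lemma:lifting-to-prism:5}) bounds by $h_\omega^{2(1-\theta)}\bigl(|\wilde u|_{H^\theta(\Tref)}^2 + \|d_{\what\cE}^{-\theta}\wilde u\|_{L^2(\Tref)}^2\bigr)$.

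The $L^\infty$-contributions are controlled pointwise in $s$ by Lemma~\ref{lemma:lifting-to-prism}~(\ref{item:lemma:lifting-to-prism:6}), producing a uniform-in-$s$ bound by $\|\wilde u\|_{L^2(\Tref)}^2 + \|\wilde u\|_{W^{1,\infty}(\Tref_\delta)}^2$, after which the remaining weight integral $\int_0^1 s^{1-2\theta}\,ds$ is finite because $\theta<1$. Finally $\|\wilde u\|_{L^2(\Tref)}\lesssim\|d_{\what\cE}^{-\theta}\wilde u\|_{L^2(\Tref)}$ since $d_{\what\cE}$ is bounded on $\Tref$, so the three pieces together match the right-hand side of the claim. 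The main technical obstacle I anticipate is the verification that $\what v(s)\in X_\omega$ in the vertex-patch case—specifically that the reflective symmetry is inherited by $\cA^{\Pref}_{\what\cE}\wilde u$—which I would handle by tracing through the construction of $\cA_{\what\cE}$ in Lemma~\ref{lemma:lifting-from-triangle:bc} and observing that with a symmetric mollifier the intermediate functions $u_1,u_2$ and the final face-corrections respect the permutation of the pair $\{1,2\}\subset N$ induced by the reflection.
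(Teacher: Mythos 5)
Your construction is exactly the paper's: lift $\wilde u$ to the prism via $\cA^{\Pref}_{\what\cE}$, extend by zero past $s=1$, push forward with $T_\omega$ at rescaled time $t/h_\omega$, bound the $L^2$-contributions by Lemma~\ref{lemma:lifting-to-prism}~(\ref{item:lemma:lifting-to-prism:5}) and the $L^\infty$-contributions by Lemma~\ref{lemma:lifting-to-prism}~(\ref{item:lemma:lifting-to-prism:6}), so the proposal is correct and essentially identical to the paper's proof. The symmetry/conformity point you flag for the vertex-patch case is a legitimate detail that the paper also relies on implicitly, and your proposed resolution (a reflection-symmetric mollifier) is the right one.
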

\begin{proof}
  Let $\varepsilon>0$ be chosen according to Lemma~\ref{lemma:lifting-to-prism}.
  Set $\wilde v(t) := \cA^{\Pref}_{\what\cE} \wilde u(t)$. Note
  that $\wilde v$ vanishes at $t=1$, and we extend it by zero on $\Tref\times[1,\infty)$. Then set
  $v(t) := T_{\omega} \wilde v(t/h_\omega)$.
  We have $v:[0,\infty)\rightarrow \wilde\cS^{\bp,1}(\cT|_\omega)$ and $v(0)=T_\omega((\cA^P_{\what\cE}\wilde u)(0)) = T_\omega \wilde u$.
  Due to Lemma~\ref{lemma:lifting-to-prism}, we conclude
  \begin{align*}
   h_{\omega}^{2-2\theta} \int_{0}^\infty &t^{2(1-\theta)} \left( \| \nabla v(t) \|_{L^2(\omega)}^2 + \| v'(t) \|_{L^2(\omega)}^2 \right) \frac{dt}{t}\\
    &= \int_0^1 t^{2(1-\theta)}
    \left( \| \nabla T_\omega \wilde v(t) \|_{L^2(\omega)}^2  + h_\omega^{-2}\| T_\omega \wilde v'(t) \|_{L^2(\omega)}^2 \right) \frac{dt}{t}\\
    &\lesssim  \int_0^1 t^{2(1-\theta)}
    \left( 
    \| \nabla \wilde v(t) \|_{L^2(\Tref)}^2 + \| \nabla \wilde v(t) \|_{L^\infty(\Tref_\varepsilon)}^2 +
    \| \wilde v'(t) \|_{L^2(\Tref)}^2 + \| \wilde v'(t) \|_{L^\infty(\Tref_\varepsilon)}^2
    \right) \frac{dt}t\\
    &= \| d_{\Tref\times \{0\}}^{1/2-\theta} \nabla \cA^{\Pref}_{\widehat\cE}\wilde u \|_{L^2(\Pref)}^2
    + \int_0^1 t^{2(1-\theta)} \| \nabla \wilde v(t) \|_{L^\infty(\Tref_\varepsilon)}^2 +
    \| \wilde v'(t) \|_{L^\infty(\Tref_\varepsilon)}^2 \frac{dt}t\\
    &\lesssim  |\wilde u|_{H^\theta(\Tref)}^2 + \| d_{\widehat \cE}^{-\theta} \wilde u \|_{L^2(\Tref)}^2
    + \int_0^1 t^{2(1-\theta)} \| \nabla \wilde v(t) \|_{L^\infty(\Tref_\varepsilon)}^2 +
    \| \wilde v'(t) \|_{L^\infty(\Tref_\varepsilon)}^2 \frac{dt}t\\
    &\lesssim  |\wilde u|_{H^\theta(\Tref)}^2 + \| d_{\widehat \cE}^{-\theta} \wilde u \|_{L^2(\Tref)}^2
    + \| \wilde u \|_{L^2(\Tref)}^2 + \| \nabla \wilde u \|_{L^\infty(\Tref_\delta)}^2 + \| \wilde u \|_{L^\infty(\Tref_\delta)}^2.
  \end{align*}
  Here, the first estimate follows from Lemma~\ref{lem:kmr:decomp:2},~(ii), the second one from
  Lemma~\ref{lemma:lifting-to-prism},~(\ref{item:lemma:lifting-to-prism:5}), and the last one from
  Lemma~\ref{lemma:lifting-to-prism},~(\ref{item:lemma:lifting-to-prism:6}).
\end{proof}
\begin{proof}[Proof of Theorem~\ref{thm:main}]
  We will only treat the case of homogeneous boundary conditions, the general case follows along the
  same lines.
  The canonical continuous embeddings
  \begin{align*}
    (\wilde\cS^{\bp,1}(\cT),\| \cdot \|_{L^2(\Omega)}) &\subset L^2(\Omega),\\
    (\wilde\cS^{\bp,1}(\cT),\| \cdot \|_{\wilde H^1(\Omega)}) &\subset \wilde H^1(\Omega)
  \end{align*}
  immediately yield for all $u\in\wilde\cS^{\bp,1}(\cT)$ the estimate
  \begin{align*}
    \| u \|_{[ (\wilde \cS^{\bp,1}(\cT),\| \cdot \|_{L^2(\Omega)}), (\wilde \cS^{\bp,1}(\cT),\| \cdot \|_{\wilde H^1(\Omega)}) ]_{\theta}}
    \gtrsim \| u \|_{[L^2(\Omega),\wilde H^1(\Omega)]_\theta}.
  \end{align*}
  It therefore remains to show the converse estimate. To that end, we employ Lemma~\ref{lem:kmr:decomp:2} and write
  \begin{align*}
    u = u_1 + \sum_{V\in\cV^{\rm int}} T_{\omega_V}(\wilde u_V) + \sum_{e\in\cE^{\rm int}}T_{\omega_e}(\wilde u_e) + \sum_{K\in\cT} u_K.
  \end{align*}
  The map $u \mapsto u_1$ is linear and bounded in $L^2(\Omega)$ and $\wilde H^1(\Omega)$, and hence also in
  $[L^2(\Omega),\wilde H^1(\Omega)]_\theta$.
  As it maps into $\wilde\cS^{1,1}(\cT)$, we moreover obtain
  \begin{align*}
    \| u_1 \|_{[ (\wilde \cS^{1,1}(\cT),\| \cdot \|_{L^2(\Omega)}), (\wilde \cS^{1,1}(\cT),\| \cdot \|_{\wilde H^1(\Omega)}) ]_{\theta}}
    \lesssim \| u \|_{[L^2(\Omega),\wilde H^1(\Omega)]_\theta},
  \end{align*}
  and this also yields
  \begin{align*}
    \| u_1 \|_{[ (\wilde \cS^{\bp,1}(\cT),\| \cdot \|_{L^2(\Omega)}), (\wilde \cS^{\bp,1}(\cT),\| \cdot \|_{\wilde H^1(\Omega)}) ]_{\theta}}
    \lesssim \| u \|_{[L^2(\Omega),\wilde H^1(\Omega)]_\theta}.
  \end{align*}
  We conclude that it remains to show that
    \begin{align*}
      \| u-u_1 \|_{[ (\wilde \cS^{\bp,1}(\cT),\| \cdot \|_{L^2(\Omega)}), (\wilde \cS^{\bp,1}(\cT),\| \cdot \|_{\wilde H^1(\Omega)}) ]_{\theta}}
      \lesssim \| u \|_{[L^2(\Omega),\wilde H^1(\Omega)]_\theta}.
    \end{align*}
  For $K\in\cT$, let $v_K$ be the functions constructed from $u_K$ by
  Lemma~\ref{lemma:lift:elpart}. For $V\in\cV^{\rm int}$ and $e\in\cE^{\rm int}$ denote by $v_V$ and $v_e$ the functions
  constructed from $u_V$ and $u_e$ by Lemma~\ref{lemma:lift:patches}. Define
  \begin{align*}
    v = \sum_{V\in\cV^{\rm int}} v_V + \sum_{e\in\cE^{\rm int}}v_e + \sum_{K\in\cT} v_K
  \end{align*}
  and note that $v:[0,\infty)\rightarrow\wilde\cS^{\bp,1}(\cT)$ as well as $v(0)=u-u_1$. Furthermore,
  \begin{align*}
    \int_{0}^\infty t^{2(1-\theta)} &\left( \| \nabla v(t) \|_{L^2(\Omega)}^2 + \| v'(t) \|_{L^2(\Omega)}^2 \right) \frac{dt}{t}\\
    &\lesssim \sum_{V\in\cV^{\rm int}} \int_{0}^\infty t^{2(1-\theta)} \left( \| \nabla v_V(t) \|_{L^2(\omega_V)}^2 + \| v_V'(t) \|_{L^2(\omega_V)}^2 \right) \frac{dt}{t}\\
    &\qquad+ \sum_{e\in\cE^{\rm int}} \int_{0}^\infty t^{2(1-\theta)} \left( \| \nabla v_e(t) \|_{L^2(\omega_e)}^2 + \| v_e'(t) \|_{L^2(\omega_e)}^2 \right) \frac{dt}{t}\\
    &\qquad+ \sum_{K\in\cT} \int_{0}^\infty t^{2(1-\theta)} \left( \| \nabla v_K(t) \|_{L^2(K)}^2 + \| v_K'(t) \|_{L^2(K)}^2 \right) \frac{dt}{t}\\
    &\lesssim \sum_{V\in\cV^{\rm int}} h_{\omega_V}^{2-2\theta} \left( |\wilde u_V|_{H^\theta(\Tref)}^2
    + \| d_{\widehat e_6}^{-\theta} \wilde u_V \|_{L^2(\Tref)}^2
    + \| \wilde u_V \|_{W^{1,\infty}(\Tref_\delta)}^2 \right)\\
    &\qquad +  \sum_{e\in\cE^{\rm int}} h_{\omega_e}^{2-2\theta} \left( |\wilde u_e|_{H^\theta(\Tref)}^2
    + \| d_{\widehat e_5\cup \widehat e_6}^{-\theta} \wilde u_e \|_{L^2(\Tref)}^2
    + \| \wilde u_e \|_{W^{1,\infty}(\Tref_\delta)}^2 \right)\\
    &\qquad+ \sum_{K\in\cT} | u_K |_{H^\theta(K)}^2 + \| d_{\partial K}^{-\theta} u_K \|_{L^2(K)}^2\\
    &\lesssim \| u \|_{[L^2(\Omega),\wilde H^1(\Omega)]_\theta}^2.
  \end{align*}
  Here, the first estimate follows using the finite overlap of the supports of the involved patches and a coloring argument, the
  second one from Lemmas~\ref{lemma:lift:elpart} and~\ref{lemma:lift:patches},
  and the last one from Lemma~\ref{lem:kmr:decomp:2}.
  Hence, with Lemma~\ref{lemma:k-trace-space}~\eqref{item:lemma:k-trace-space-ii} we conclude that
  \begin{align*}
    | u-u_1 |_{[ (\wilde \cS^{\bp,1}(\cT),\| \cdot \|_{L^2(\Omega)}), (\wilde \cS^{\bp,1}(\cT),\| \cdot \|_{\wilde H^1(\Omega)}) ]_{\theta}}^2
    \leq \| u \|_{[L^2(\Omega),\wilde H^1(\Omega)]_\theta}^2,
  \end{align*}
  and Proposition~\ref{lemma:K-vs-k} shows
  \begin{align*}
    \| u - u_1 \|_{[ (\wilde \cS^{\bp,1}(\cT),\| \cdot \|_{L^2(\Omega)}), (\wilde \cS^{\bp,1}(\cT),\| \cdot \|_{\wilde H^1(\Omega)}) ]_{\theta}}
    &\lesssim \| u - u_1 \|_{L^2(\Omega)} +
    | u-u_1 |_{[ (\wilde \cS^{\bp,1}(\cT),\| \cdot \|_{L^2(\Omega)}), (\wilde \cS^{\bp,1}(\cT),\| \cdot \|_{\wilde H^1(\Omega)}) ]_{\theta}}\\
    &\lesssim \| u \|_{[L^2(\Omega),\wilde H^1(\Omega)]_\theta},
  \end{align*}
  which concludes the proof.
\end{proof}
\bibliographystyle{abbrv}
\bibliography{literature}

\begin{thebibliography}{10}

\bibitem{Adams_75}
R.~Adams.
\newblock {\em Sobolev spaces}.
\newblock Academic Press [A subsidiary of Harcourt Brace Jovanovich,
  Publishers], New York-London, 1975.
\newblock Pure and Applied Mathematics, Vol. 65.

\bibitem{ArioliL_09}
M.~Arioli and D.~Loghin.
\newblock Discrete interpolation norms with applications.
\newblock {\em SIAM J. Numer. Anal.}, 47(4):2924--2951, 2009.

\bibitem{AuradaFFKMP_17}
M.~Aurada, M.~Feischl, T.~F\"{u}hrer, M.~Karkulik, J.~M. Melenk, and
  D.~Praetorius.
\newblock Local inverse estimates for non-local boundary integral operators.
\newblock {\em Math. Comp.}, 86(308):2651--2686, 2017.

\bibitem{BabuskaCMP_91}
I.~Babu\v{s}ka, A.~Craig, J.~Mandel, and J.~Pitk\"{a}ranta.
\newblock Efficient preconditioning for the {$p$}-version finite element method
  in two dimensions.
\newblock {\em SIAM J. Numer. Anal.}, 28(3):624--661, 1991.

\bibitem{BenBelgacem_94}
F.~Ben~Belgacem.
\newblock Polynomial extensions of compatible polynomial traces in three
  dimensions.
\newblock volume 116, pages 235--241. 1994.
\newblock ICOSAHOM'92 (Montpellier, 1992).

\bibitem{BerghL_76}
J.~Bergh and J.~L\"{o}fstr\"{o}m.
\newblock {\em Interpolation spaces. {A}n introduction}.
\newblock Grundlehren der Mathematischen Wissenschaften, No. 223.
  Springer-Verlag, Berlin-New York, 1976.

\bibitem{BernardiDM_92}
C.~Bernardi, M.~Dauge, and Y.~Maday.
\newblock Rel\`evements de traces pr\'{e}servant les polyn\^{o}mes.
\newblock {\em C. R. Acad. Sci. Paris S\'{e}r. I Math.}, 315(3):333--338, 1992.

\bibitem{BernardiDM_07}
C.~Bernardi, M.~Dauge, and Y.~Maday.
\newblock Polynomials in the {S}obolev world (version 2).
\newblock Technical Report~14, IRMAR, 2007.

\bibitem{BernardiDM_10}
C.~Bernardi, M.~Dauge, and Y.~Maday.
\newblock The lifting of polynomial traces revisited.
\newblock {\em Math. Comp.}, 79(269):47--69, 2010.

\bibitem{BernardiM_97}
C.~Bernardi and Y.~Maday.
\newblock Spectral methods.
\newblock In P.~Ciarlet and J.~Lions, editors, {\em Handbook of Numerical
  Analysis, Vol. 5}. North Holland, Amsterdam, 1997.

\bibitem{BernardiM_99}
C.~Bernardi and Y.~Maday.
\newblock Uniform inf-sup conditions for the spectral discretization of the
  {S}tokes problem.
\newblock {\em Math. Models Methods Appl. Sci.}, 9(3):395--414, 1999.

\bibitem{BraessPS_09}
D.~Braess, V.~Pillwein, and J.~Sch\"{o}berl.
\newblock Equilibrated residual error estimates are {$p$}-robust.
\newblock {\em Comput. Methods Appl. Mech. Engrg.}, 198(13-14):1189--1197,
  2009.

\bibitem{BrennerScott}
S.~C. Brenner and L.~R. Scott.
\newblock {\em The mathematical theory of finite element methods}, volume~15 of
  {\em Texts in Applied Mathematics}.
\newblock Springer, New York, third edition, 2008.

\bibitem{Burenkov_98}
V.~I. Burenkov.
\newblock {\em Sobolev spaces on domains}, volume 137 of {\em Teubner-Texte zur
  Mathematik [Teubner Texts in Mathematics]}.
\newblock B. G. Teubner Verlagsgesellschaft mbH, Stuttgart, 1998.

\bibitem{FuhrerMPR_15}
T.~F\"{u}hrer, J.~M. Melenk, D.~Praetorius, and A.~Rieder.
\newblock Optimal additive {S}chwarz methods for the {$hp$}-{BEM}: the
  hypersingular integral operator in 3{D} on locally refined meshes.
\newblock {\em Comput. Math. Appl.}, 70(7):1583--1605, 2015.

\bibitem{Gagliardo_57}
E.~Gagliardo.
\newblock Caratterizzazioni delle tracce sulla frontiera relative ad alcune
  classi di funzioni in {$n$} variabili.
\newblock {\em Rend. Sem. Mat. Univ. Padova}, 27:284--305, 1957.

\bibitem{Georgoulis_08}
E.~H. Georgoulis.
\newblock Inverse-type estimates on {$hp$}-finite element spaces and
  applications.
\newblock {\em Math. Comp.}, 77(261):201--219, 2008.

\bibitem{GrahamHS_05}
I.~G. Graham, W.~Hackbusch, and S.~A. Sauter.
\newblock Finite elements on degenerate meshes: inverse-type inequalities and
  applications.
\newblock {\em IMA J. Numer. Anal.}, 25(2):379--407, 2005.

\bibitem{Grisvard}
P.~Grisvard.
\newblock {\em Elliptic problems in nonsmooth domains}, volume~24 of {\em
  Monographs and Studies in Mathematics}.
\newblock Pitman (Advanced Publishing Program), Boston, MA, 1985.

\bibitem{KMR_19}
M.~Karkulik, J.~M. Melenk, and A.~Rieder.
\newblock Stable decompositions of {$hp$}-{BEM} spaces and an optimal {S}chwarz
  preconditioner for the hypersingular integral operator in 3{D}.
\newblock {\em ESAIM Math. Model. Numer. Anal.}, 54(1):145--180, 2020.

\bibitem{Maday_89}
Y.~Maday.
\newblock Rel\`evements de traces polynomiales et interpolations hilbertiennes
  entre espaces de polyn\^{o}mes.
\newblock {\em C. R. Acad. Sci. Paris S\'{e}r. I Math.}, 309(7):463--468, 1989.

\bibitem{McLean_00}
W.~McLean.
\newblock {\em Strongly elliptic systems and boundary integral equations}.
\newblock Cambridge University Press, Cambridge, 2000.

\bibitem{MelenkR_21}
J.~M. Melenk and A.~Rieder.
\newblock {$hp$}-{FEM} for the fractional heat equation.
\newblock {\em IMA J. Numer. Anal.}, 41(1):412--454, 2021.

\bibitem{MunozSola_97}
R.~Mu\~{n}oz Sola.
\newblock Polynomial liftings on a tetrahedron and applications to the
  {$h$}-{$p$} version of the finite element method in three dimensions.
\newblock {\em SIAM J. Numer. Anal.}, 34(1):282--314, 1997.

\bibitem{SchSch01}
D.~Sch\"{o}tzau and C.~Schwab.
\newblock {$hp$}-discontinuous {G}alerkin time-stepping for parabolic problems.
\newblock {\em C. R. Acad. Sci. Paris S\'{e}r. I Math.}, 333(12):1121--1126,
  2001.

\bibitem{Schwab_98}
C.~Schwab.
\newblock {\em {$p$}- and {$hp$}-finite element methods}.
\newblock Numerical Mathematics and Scientific Computation. The Clarendon
  Press, Oxford University Press, New York, 1998.
\newblock Theory and applications in solid and fluid mechanics.

\bibitem{ScottZ_90}
L.~R. Scott and S.~Zhang.
\newblock Finite element interpolation of nonsmooth functions satisfying
  boundary conditions.
\newblock {\em Math. Comp.}, 54(190):483--493, 1990.

\bibitem{Tartar}
L.~Tartar.
\newblock {\em An introduction to {S}obolev spaces and interpolation spaces},
  volume~3 of {\em Lecture Notes of the Unione Matematica Italiana}.
\newblock Springer, Berlin; UMI, Bologna, 2007.

\bibitem{Thom97}
V.~Thom\'{e}e.
\newblock {\em Galerkin finite element methods for parabolic problems},
  volume~25 of {\em Springer Series in Computational Mathematics}.
\newblock Springer-Verlag, Berlin, 1997.

\bibitem{Triebel}
H.~Triebel.
\newblock {\em Interpolation theory, function spaces, differential operators}.
\newblock Johann Ambrosius Barth, Heidelberg, second edition, 1995.

\bibitem{Zygmund}
A.~Zygmund.
\newblock {\em Trigonometric series. {V}ol. {I}, {II}}.
\newblock Cambridge Mathematical Library. Cambridge University Press,
  Cambridge, third edition, 2002.
\newblock With a foreword by Robert A. Fefferman.

\end{thebibliography}
\end{document}